\newcommand\blfootnote[1]{%
	\begingroup
	\renewcommand\thefootnote{}\footnote{#1}%
	\addtocounter{footnote}{-1}%
	\endgroup
}
\newtheorem{assumption}{Assumption}
\newcommand{\Rmnum}[1]{\expandafter\@slowromancap\romannumeral #1@}
\begin{document}

\title{Convergence of Adam for Non-convex Objectives: Relaxed Hyperparameters and Non-ergodic Case}

\author{\name Meixuan He$^{\dagger}$ 
	\email hemx833@nenu.edu.cn\\
	\name Yuqing Liang$^{\dagger}$ 
	\email liangyq337@nenu.edu.cn
	\\
	\name Jinlan Liu 
	\email liujl627@nenu.edu.cn\\
	\name Dongpo Xu$^{\ast}$
	\email xudp100@nenu.edu.cn\\
	\addr School of Mathematics and Statistics\\
	Northeast Normal University\\
	Changchun 130024, China}

\editor{My editor}
\blfootnote{$\dagger$ Equal Contribution.}
\blfootnote{ $\ast$ Corresponding Author.}
\maketitle

\begin{abstract}
Adam is a commonly used stochastic optimization algorithm in machine learning. However, its convergence is still not fully understood, especially in the non-convex setting. This paper focuses on exploring hyperparameter settings for the convergence of vanilla Adam and tackling the challenges of non-ergodic convergence related to practical application. The primary contributions are summarized as follows: firstly, we introduce precise definitions of ergodic and non-ergodic convergence, which cover nearly all forms of convergence for stochastic optimization algorithms. Meanwhile, we emphasize the superiority of non-ergodic convergence over ergodic convergence. Secondly, we establish a weaker sufficient condition for the ergodic convergence guarantee of Adam, allowing a more relaxed choice of hyperparameters. On this basis, we achieve the almost sure ergodic convergence rate of Adam, which is arbitrarily close to $o(1/\sqrt{K})$. More importantly, we prove, for the first time, that the last iterate of Adam converges to a stationary point for non-convex objectives. Finally, we obtain the non-ergodic convergence rate of $O(1/K)$ for function values under the Polyak-\L ojasiewicz (PL) condition. These findings build a solid theoretical foundation for Adam to solve non-convex stochastic optimization problems.
\end{abstract}

\begin{keywords}
Adam,  stochastic optimization, non-ergodic convergence, PL condition, non-convex optimization
\end{keywords}

\section{Introduction}
This paper focuses on the stochastic optimization problem of the following form
\begin{equation}\label{wenti1}
\min_{\mathbf{x}\in\mathbb{R}^d}\left\lbrace f(\mathbf{x}):=\mathbb{E}_{\xi\sim \mathbb{P}}[\ell(\mathbf{x};\xi)]\right\rbrace ,
\end{equation}
where $\xi$ is a random variable obeying an unknown distribution $\mathbb{P}$, denoting a randomly selected data sample or random noise. 
\par To solve the optimization problem \eqref{wenti1}, \citet{robbins1951stochastic} proposed the stochastic gradient descent (SGD) algorithm employing a subsample of the full gradient at each iteration, known as a minibatch. Specifically, the iteration of SGD is 
\begin{align}
\mathbf{x}^{k+1}=\mathbf{x}^k-\eta_k\mathbf{g}^k,\nonumber
\end{align} 
where $\mathbf{g}^k$ denotes the stochastic gradient and $\eta_k$ is the step size. In particular, SGD exhibits efficacy in solving large-scale machine learning problems \citep[e.g.,][]{bottou2010large,bottou2018optimization,shen2019deep}. However, since the stochastic gradient merely serves as an unbiased estimator of the full gradient, only the sublinear convergence rate of $O(1/K)$ is attained in the strongly convex setting \citep[see][]{agarwal2009information, rakhlin2012making,luo2022sgd}. Meanwhile, this estimation also brings a certain variance, which may produce oscillations in the objective function \citep{ruder2016overview} or cause the algorithm to converge to local minima \citep{10.5555/2969033.2969154}.
\par In recent years, many adaptive step size  algorithms have been developed to address the above issues. As a pioneering work, \citet{duchi2011adaptive} proposed AdaGrad, which adaptively adjusts step size based on the accumulation of historical information (the square of the gradient values). Despite its excellent performance in handling sparse data \citep{dean2012large,pennington2014glove}, AdaGrad loses its edge when the gradients are dense. Subsequently, RMSProp \citep{tieleman2012lecture} adopted an exponential moving average technique to integrate historical information of gradient. By assigning more weight to the current gradient, RMSProp overcomes the monotonically decreasing of the step size in AdaGrad. Given this, \citet{kingma2014adam} introduced the first-order momentum in RMSProp to modify the update direction  and developed the Adam algorithm. For online convex problems, the convergence of Adam with diminishing step sizes has been provided by \citet{kingma2014adam}. However, \citet{reddi2019convergence} pointed out a flaw in the proof of \citet{kingma2014adam} and constructed a convex optimization problem where Adam exhibits divergence. Therefore, some attempts have focused on designing variants of the Adam algorithm to guarantee convergence. \citet{defazio2022adaptivity} introduced the MADGRAD method to update in a sparse manner based on the double averaging of AdaGrad. By the maximum operation, \citet{reddi2019convergence} proposed AMSGrad to solve the non-monotonicity of the effective step size in Adam. Further, \citet{Chen2018ClosingTG} introduced Padam to achieve better generalization performance, which expands the range of the adaptive parameter in AMSGrad. Moreover, \citet{chen2019convergence} set $\theta_{k}=1-1/k$ in Adam to achieve a decreasing effective step size and proposed the AdaFom algorithm. In particular, the second-order momentum in AdaFom is exactly the average of squared historical gradients. In addition, \citet{luo2018adaptive} proposed AdaBound by clipping the second-order momentum, effectively solving the problem of step sizes vanishing and exploding. 

\par Although the proof of \citet{kingma2014adam} has some drawbacks, the superior empirical performance of Adam makes it the most popular algorithm at present. This motivates us to explore the convergence behavior of vanilla Adam. \citet{zhang2022adam} claimed that vanilla Adam can achieve convergence and argued that the divergent example in  \citet{reddi2019convergence} is inconsistent with practice. The difference is that \citet{reddi2019convergence} first set the hyperparameters and then choose a $\beta$-dependent optimization problem, whereas \citet{zhang2022adam} set the hyperparameters according to the optimization problem, which is more in line with the actual scenario. So, the argument made by \citet{zhang2022adam} does not contradict the counterexample presented by \citet{reddi2019convergence}. When the second-order momentum parameter is close enough to $1$, \citet{zhang2022adam} demonstrated that Adam converges to the neighborhood of a critical point and converges to a critical point if the strong growth condition is satisfied. For Adam and AdaGrad, \citet{defossezsimple} provided a simple proof of convergence and derived an upper bound on the norm of the gradient. In addition, \citet{wang2022provable} conducted an analysis of Adam on $(L_0,L_1)$-smooth objective functions. On the other hand, some recent work \citep{chen2019convergence,zou2019sufficient} aimed to provide the sufficient condition to guarantee the convergence of Adam. By introducing the generalized second-order momentum estimate, \citet{chen2019convergence} proposed a unified framework of Adam-type methods and provided a sufficient condition for the convergence of Generalized Adam, but this condition is difficult to check. In the non-convex setting, \citet{chen2019convergence} obtained the convergence rate of $O(\ln K/\sqrt{K})$ for AMSGrad and AdaFom. In contrast, \citet{zou2019sufficient} presented an easy-to-check sufficient condition, abbreviated as \textbf{SC-Zou}, for the convergence of Adam. Since \textbf{SC-Zou} only depends on the step size $\eta_{k}$ and the second-order momentum parameter $\theta_ {k}$, \citet{zou2019sufficient} contributed to understanding the convergence behavior and hyperparameter settings of Adam. 
\par To highlight the contribution of this paper, we perform a comparison of our work with the previous literature. In the convex case, 
\citet{kingma2014adam,reddi2019convergence} obtained an $O(1/\sqrt{K})$ convergence rate of Adam. For non-convex objectives,  \citet{zou2019sufficient,chen2019convergence,guo2021novel,defossezsimple,zhang2022adam} showed that Adam converges with the rate of $O(\ln K/\sqrt{K})$. In contrast, we provide the almost sure convergence analysis of Adam, with the rate arbitrarily close to $o(1/\sqrt{K})$. Note that the above results focus on ergodic convergence, but the output of the last iterate is more relevant with practice, so the non-ergodic convergence results considered in this paper are more meaningful. In particular, Proposition \ref{contopro} shows that the sufficient condition for Adam established in Corollary \ref{suffcor}, abbreviated as \textbf{SC-Adam}, is  weaker than \textbf{SC-Zou}. This indicates that \textbf{SC-Adam} offers a more flexible hyperparameter selection for Adam (The weaker the condition, the larger the selection range of hyperparameters). Table \ref{table:results} summarizes our main contributions and innovations.

\begin{table}[h]
\renewcommand{\arraystretch}{1.2}
\tabcolsep=0.5mm
\centering
\scalebox{0.81}{
\begin{adjustbox}{center}
\begin{threeparttable}
\begin{tabular}{c c c c c c}
	\toprule
	\textbf{Optimizer}  
	& \textbf{Setting} 
	& \textbf{\makecell{Non-ergodic \\ convergence}} 
	& \textbf{\makecell{almost sure\\ convergence}} & \textbf{\makecell{Ergodic convergence
	\\ Sufficient condition }}
	\\
	\midrule
	Adam
	\citep{kingma2014adam}
	& convex 
	& \ding{55} 
	& \ding{55} 
	& \ding{55} 
	\\
	\midrule
	Generic Adaptive Method \citep{reddi2019convergence} 
	& convex 
	& \ding{55} 
	& \ding{55} 
	& \ding{55} 
	\\
	\midrule
	Generalized Adam \citep{chen2019convergence} 
	& non-convex 
	& \ding{55} 
	& \ding{55} 
	& \checkmark 
	\\
	\midrule
	Adam \citep{defossezsimple} 
	& non-convex 
	& \ding{55} 
	& \ding{55} 
	& \ding{55} 
	\\
	\midrule
	Adam-Style Algorithm \citep{guo2021novel} 
	& non-convex 
	& \ding{55} 
	& \ding{55} 
	& \ding{55} 
	\\
	\midrule
	Adam \citep{zhang2022adam} 
	& non-convex 
	& \ding{55} 
	& \ding{55}
	&\ding{55}
	\\
	\midrule
	Generic Adam \citep{zou2019sufficient} 
	& non-convex 
	& \ding{55} 
	& \ding{55} 
	& \checkmark \\
	\midrule
	\textbf{Adam\,(ours)}  
	& non-convex 
	& \checkmark 
	& \checkmark 
	& \checkmark \\
	\bottomrule
\end{tabular} 
\end{threeparttable}
\end{adjustbox}}
\caption{Comparison with previous works.}
\label{table:results}
\end{table}

\par Specifically, this paper establishes the following new convergence results for Adam: 
\begin{itemize}
\item We present precise definitions of ergodic and non-ergodic convergence and clarify their relationship in Section \ref{dingyi}, showing that non-ergodic convergence is stronger than ergodic convergence. In particular, we demonstrate that the convergence of minimum and uniform output is a special case of ergodic convergence.
	
\item We propose a more relaxed sufficient condition for the convergence of Adam in Corollary \ref{suffcor}, referred to as \textbf{SC-Adam}. Proposition \ref{contopro} shows that \textbf{SC-Adam} is more relaxed than \textbf{SC-Zou}, and there is no additional monotonicity restriction on the second-order momentum $\theta_{k}$ and allows a wider range of the step size $\eta_{k}$.
		
\item We establish the first almost sure convergence rate analysis of Adam in Theorem \ref{th6} and Corollary \ref{cor4}, showing that the rate of gradient norm is arbitrarily close to $o(1/\sqrt{K})$.
	
\item We first prove the non-ergodic convergence of Adam in the non-convex setting, that is, $\lim_{k\rightarrow\infty} \mathbb{E}[\lVert \nabla f(\mathbf{x}^k)\rVert]=0$ and $\lim_{k\rightarrow\infty} \lVert\nabla f(\mathbf{x}^k)\rVert=0\ a.s.$ (see Theorem \ref{th2}). Additionally, we present Example \ref{e3} to show the superiority of non-ergodic convergence compared to existing minimum and uniform convergence.

\item Under the PL condition, we achieve the non-ergodic convergence of Adam for function values in Theorem \ref{th5}, that is, $\lim_{k\rightarrow\infty} \mathbb{E}[f(\mathbf{x}^k)]=f^*$ and $\lim_{k\rightarrow\infty} f(\mathbf{x}^k)=f^*\ a.s.$ Furthermore, Theorem \ref{th3} shows that by choosing the step size $\eta_k=\frac{\sqrt{M^2+\epsilon}}{(1-\beta)v}\cdot\frac{1}{k+1}$, the non-ergodic convergence rate of $O(1/K)$ is attained.
\end{itemize}
\par The rest of this paper is organized as follows: Section \ref{Prelim} provides the notations, definitions, and assumptions used in this paper, along with the Adam algorithm. In Section \ref{ERGODIC}, we analyze the ergodic convergence rate of Adam. We establish the non-ergodic convergence results for Adam in Section \ref{Non-ergodic Convergence of Adam}. Finally, we conclude this paper in Section \ref{sec:conclusion}.

\section{Preliminaries}\label{Prelim}
In this section, we first introduce notations in Section \ref{fuhao},  then provide definitions of ergodic and non-ergodic convergence in Section \ref{dingyi}. The assumptions used in our theoretical analysis are given in Section \ref{jiashe}. Section \ref{suanfa} presents the pseudo-code of the Adam algorithm.
\subsection{Notations}\label{fuhao}
In this paper, the vector operations are all element-wise, which means for any $\epsilon$, $p>0$ and $\mathbf{x}$, $\mathbf{y}\in\mathbb{R}^d$, we have  $\mathbf{x}\pm\epsilon:=(\mathbf{x}_1\pm\epsilon,\mathbf{x}_2\pm\epsilon,\cdots,\mathbf{x}_d\pm\epsilon)\in\mathbb{R}^d$, $(\mathbf{x})^p:=((\mathbf{x}_1)^p,(\mathbf{x}_2)^p,\cdots ,(\mathbf{x}_d)^p)\in\mathbb{R}^d$, 
$\mathbf{y}\mathbf{x}:=(\mathbf{y}_1\mathbf{x}_1,\mathbf{y}_2\mathbf{x}_2,\cdots,\mathbf{y}_d\mathbf{x}_d)\in\mathbb{R}^d$, $\mathbf{y}/\mathbf{x}:=(\mathbf{y}_1/\mathbf{x}_1,\mathbf{y}_2/\mathbf{x}_2,\cdots,\mathbf{y}_d/\mathbf{x}_d)\in\mathbb{R}^d$. For clarity, Table \ref{table:2} provides other notations employed in this study.
\renewcommand{\arraystretch}{1.2}
\begin{longtable}{p{1.5cm}p{13cm}}
		\toprule
		\textbf{Symbol}
		&\multicolumn{1}{c}
		{\textbf{Meaning}}
		\\
		\hline
		$\mathbb{R}^d$ 
		&The $d$-dimensional real coordinate space
		\\
		\hline
		$\mathbb{N}_{+}$ 
		&The set of positive integers
		\\
		\hline
		$\mathbf{g}^k$ 
		&The stochastic gradient, $\mathbf{g}^k:=\nabla\ell(\mathbf{x}^k;\xi^k)$\\
		\hline
		$\mathcal{F}^k$
		&The $\sigma$-algebra,  $\mathcal{F}^k:=\sigma(\mathbf{x}^1,\mathbf{x}^2,\cdots,\mathbf{x}^k)$
		\\
		\hline
		$\mathbf{x}_i^k$ 
		&The $i$-th component of $\mathbf{x}^k$ obtained in the $k$-th iteration
		\\
		\hline
		$\beta_k$ 
		&The first-order momentum  parameter
		\\
		\hline
		$\theta_k$ 
		&The second-order momentum  parameter
		\\
		\hline
		$\eta_k$ 
		&The iteration step size
		\\
		\hline
		$v$ 
		&The coefficient of PL condition
		\\
		\hline
		$M$ 
		&Upper bound on the gradient norm
		\\
		\hline
		$\left\|  \mathbf{x} \right\|  $ &${l}_2$-norm of the vector $\mathbf{x}$
		\\
		\hline
		$\left\|  \mathbf{x} \right\|  _1$
		&${l}_1$-norm of the vector $\mathbf{x}$
		\\
		\hline
		$\nabla{f(\mathbf{x})}$
		&The gradient of function $f$ at $\mathbf{x}$
		\\
		\hline
		$\nabla_j{f(\mathbf{x})}$
		&The $j$-th component of the gradient $\nabla{f(\mathbf{x})}$
		\\
		\hline
		$\Theta$
		&If $c_1\le\left| f(\mathbf{x})/g(\mathbf{x})\right|\le c_2 $ for some $c_1,\ c_2>0$, then we say that $f(\mathbf{x})=\Theta(g(\mathbf{x}))$
		\\
		\hline
		$O$
		&If $\left| f(\mathbf{x})/g(\mathbf{x})\right|\le c $ for some $c>0$, then we say that $f(\mathbf{x})=O(g(\mathbf{x}))$
		\\
		\hline
		$o$
		&If $\lim_{\mathbf{x}\rightarrow\mathbf{x}_{0}}\left| f(\mathbf{x})/g(\mathbf{x})\right|=0 $ for a certain  $\mathbf{x}_{0}$, then we say that $f(\mathbf{x})=o(g(\mathbf{x}))$
		\\
		\bottomrule
		\\
     \caption{Table of notations.}
	\label{table:2}
\end{longtable}

\subsection{Definitions}\label{dingyi}
In this section, we revisit the ergodic and non-ergodic convergence results of stochastic optimization algorithms in previous work and provide precise definitions. Through illustrative examples and a theoretical proposition, we investigate the relationship between ergodic and non-ergodic convergence, emphasizing the superiority of our non-ergodic results.

\begin{definition}[Ergodic convergence]\label{ergodic}
Let $(x_{n})_{n\geq 1}$ be a sequence of real numbers and $(\omega_{n,k})_{n\geq 1, 1\leq k\leq n}$ be a double array of real numbers that satisfies $\omega_{n,k}\geq 0$ and  $\sum_{k=1}^{n}\omega_{n,k}=1$ for any $n\in \mathbb{N}_{+}$. The ergodic sequence of $x_{k}$ with respect to $\omega_{n,k}$ is defined by $\bar{x}_{n}:=\sum_{k=1}^{n}\omega_{n,k}x_{k}$. If 
\begin{align*}
	\lim_{n\to\infty}
	\bar{x}_{n}=x,
\end{align*}
then the convergence of $x_{n}$ to $x$ is said to be ergodic. 
\end{definition}

\begin{definition}[Non-ergodic convergence]\label{nonergodic}
Let $(x_{n})_{n\geq 1}$ be a sequence of real numbers, if 
\begin{align*}
	\lim_{n\to\infty}x_{n}=x,
\end{align*}
then the convergence of $x_{n}$ to $x$ is said to be non-ergodic.
\end{definition}
\par In particular, ergodic convergence describes the relationship between the average of the sequence $(x_{k})_{1\leq k\leq n}$ and the limit $x$. In contrast, non-ergodic convergence focuses on the difference of $x_{n}$ from the limit $x$. An intuition derived from Definitions \ref{ergodic} and \ref{nonergodic} is that the non-ergodic convergence of $x_{n}$ is a stronger result that is more consistent with practice. To support this claim, we first give a simple counter-example to demonstrate that ergodic convergence does not necessarily imply the non-ergodic counterpart.
\begin{example}
Let $x_{k}=\lvert \sin \big((k\pi)/2\big)\rvert$ and $\omega_{n,k}=1/n$, then the ergodic sequence of $x_{k}$ is 
\begin{align*}
	\bar{x}_{n}
	:=
	\sum_{k=1}^{n}\omega_{n,k} 
	x_{k}
	=\dfrac{1}{n}
	\sum_{k=1}^{n}
	\Big\lvert \sin\,\dfrac{k\pi}{2}
	\Big\rvert.
\end{align*}
Then when $n$ is an odd number, we have
\begin{align*}
	\bar{x}_{n}
	=\dfrac{1}{n}
	\sum_{k=1}^{n}
	\Big\lvert \sin\,\dfrac{k\pi}{2}\Big\rvert
	=\dfrac{1}{n}
	\left(\dfrac{n-1}{2}+1\right)
	=\dfrac{n+1}{2n},
\end{align*}
which indicates that $\bar{x}_{n}$ converges to $1/2$ when $n$ is odd. On the other hand, when $n$ is an even number, we have 
\begin{align*}
	\bar{x}_{n}
	=\dfrac{1}{n}
	\sum_{k=1}^{n}
	\Big\lvert \sin\,\dfrac{k\pi}{2}\Big\rvert
	=\dfrac{1}{n}
	\cdot
	\dfrac{n}{2}
	=\dfrac{1}{2}.
\end{align*}
Thus, we obtain that $\bar{x}_{n}\to 1/2\,(n\to\infty)$, that is, $x_{n}$ converges to $1/2$ in ergodicity. Note that the odd sequence of $(x_{n})_{n\geq 1}$ converges to $1$, but the even sequence of $(x_{n})_{n\geq 1}$ converges to $0$, so the limit of such sequence does not exist, that is, $x_{n}$ fails to converge to $1/2$ in non-ergodic sense.
\end{example}
\par Convergence analysis is essential in understanding the performance of various stochastic optimization algorithms. In particular,  \citet{chambolle2016ergodic,ward2020adagrad,huang2021super} focused on the behaviors of ergodic sequences. On the other hand,  \citet{sun2019non,sun2020novel,he2022revisit,LIU202327} worked on achieving non-ergodic convergence. This paper establishes the relationship between ergodic and non-ergodic convergence in Proposition \ref{prop-non}, accompanied with a rigorous proof. Specifically, we consider $\omega_{n,k}$ with a limit of zero and show that non-ergodic convergence outperforms ergodic convergence.
\begin{proposition}\label{prop-non}
Suppose that $(x_{n})_{n\geq 1}$ is a sequence of real numbers and $(\omega_{n,k})_{n\geq 1, 1\leq k\leq n}$ is a double array of real numbers such that $\lim_{n\to\infty}\omega_{n,k}=0$ for any $1\leq k\leq n$. Then if $\lim_{n\to\infty}x_{n}=x$, we have 
\begin{align*}
	\lim_{n\to\infty}
	\bar{x}_{n}
	=x.
\end{align*}
where $\bar{x}_{n}$ is the ergodic sequence of $x_{k}$ with respect to $\omega_{n,k}$, that is, $\bar{x}_{n}:=\sum_{k=1}^{n}\omega_{n,k}x_{k}$.
\end{proposition}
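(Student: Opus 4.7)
The plan is to prove this via a classical Toeplitz–Silverman style argument, splitting the ergodic sum into a ``head'' that is controlled by the decay $\omega_{n,k}\to 0$ and a ``tail'' that is controlled by the convergence $x_n\to x$. The crucial structural facts I would exploit are the two properties inherited from Definition \ref{ergodic}, namely $\omega_{n,k}\geq 0$ and $\sum_{k=1}^{n}\omega_{n,k}=1$, together with the hypothesis $\lim_{n\to\infty}\omega_{n,k}=0$ for each fixed $k$.

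First I would rewrite the quantity of interest using the partition of unity. Since $\sum_{k=1}^{n}\omega_{n,k}=1$, one has
\begin{align*}
\bar{x}_{n}-x
=\sum_{k=1}^{n}\omega_{n,k}(x_{k}-x),
\end{align*}
so it suffices to show that this expression tends to $0$. Fix $\varepsilon>0$. Using $\lim_{n\to\infty}x_{n}=x$, choose $N=N(\varepsilon)$ with $|x_{k}-x|<\varepsilon/2$ for every $k\geq N$. Then split the sum at index $N$:
\begin{align*}
|\bar{x}_{n}-x|
\leq
\sum_{k=1}^{N-1}\omega_{n,k}|x_{k}-x|
+\sum_{k=N}^{n}\omega_{n,k}|x_{k}-x|.
\end{align*}

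Next I would bound each piece separately. The tail piece, using $\omega_{n,k}\geq 0$ and $\sum_{k=1}^{n}\omega_{n,k}=1$, satisfies
\begin{align*}
\sum_{k=N}^{n}\omega_{n,k}|x_{k}-x|
\leq \frac{\varepsilon}{2}\sum_{k=N}^{n}\omega_{n,k}
\leq \frac{\varepsilon}{2}.
\end{align*}
The head piece is a finite sum of $N-1$ terms, each of the form $\omega_{n,k}|x_{k}-x|$ with $k$ fixed. Since $|x_{k}-x|$ is a finite constant for each such $k$ and $\omega_{n,k}\to 0$ as $n\to\infty$ by hypothesis, each individual summand tends to $0$; hence the whole finite sum tends to $0$, and one can pick $n_{0}\geq N$ so large that the head is at most $\varepsilon/2$ for all $n\geq n_{0}$. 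Combining the two bounds yields $|\bar{x}_{n}-x|<\varepsilon$ for $n\geq n_{0}$, and since $\varepsilon$ was arbitrary we conclude $\bar{x}_{n}\to x$.

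I do not expect a genuine obstacle here; the only delicate point is that one must pick $N$ before letting $n\to\infty$, so that $N-1$ stays fixed and the head really is a finite sum to which the pointwise condition $\omega_{n,k}\to 0$ can be applied term-by-term. If instead one tried to send $N$ to infinity together with $n$, the finiteness of the head would fail and the argument would break. Everything else is routine triangle inequality and the normalization $\sum_{k}\omega_{n,k}=1$, and no stronger regularity on $(x_{n})$ or $(\omega_{n,k})$ is needed.
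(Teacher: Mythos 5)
Your proposal is correct and follows essentially the same route as the paper's proof in Appendix~\ref{A}: both rewrite $\bar{x}_{n}-x=\sum_{k=1}^{n}\omega_{n,k}(x_{k}-x)$ via the normalization $\sum_{k=1}^{n}\omega_{n,k}=1$, split at a fixed index determined by the convergence of $x_{n}$, kill the tail with $\sum_{k}\omega_{n,k}=1$ and nonnegativity, and kill the finite head using $\omega_{n,k}\to 0$ for each fixed $k$. The only cosmetic difference is that the paper introduces an explicit uniform bound $G$ on $|x_{n}-x|$ and a quantitative threshold $|\omega_{n,k}|<\epsilon/(2GN_{1})$, whereas you simply observe that each of the finitely many head terms tends to zero; these are interchangeable.
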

\begin{proof}
See Appendix \ref{A}.
\end{proof}
\par Note that the condition $\omega_{n,k}\to 0\,(n\to\infty)$ for any $1\leq k\leq n$ in Proposition \ref{prop-non} is easily to be satisfied in most existing ergodic convergence analyses, see the following examples.
\begin{example}\label{e2}
According to different types of optimization problems, we explain the rationality of the condition that $\lim_{n\to\infty}\omega_{n,k}=0$ for any $1\leq k\leq n$, thereby the ergodic convergence can be deduced from the non-ergodic convergence.
\begin{enumerate}
\item[$(1)$]Strongly Convex Setting
\par \citet{guo2020revisiting} revisited the average strategy in stochastic algorithms, and the sequence to measure the performance of SGD can be viewed as
\begin{align*}
	\bar{y}_{n}
	:=
	\dfrac{\sum_{k=1}^{n}k^{\alpha}
	\mathbb{E}\big[
	\lVert\mathbf{x}^{k}-\mathbf{x}^{*}
	\rVert^{2}\big]}
	{\sum_{k=1}^{n}k^{\alpha}},
\end{align*}
where $\alpha\geq 0$ is a constant and $\mathbf{x}^{*}$ is a minimizer of the function $f$, that is, $\mathbf{x}^{*}=\arg\min_{\mathbf{x}\in\mathbb{R}^d}f(\mathbf{x})$. By choosing $\omega_{n,k}:=k^{\alpha}/\big(\sum_{k=1}^{n}k^{\alpha}\big)$ and $y_{k}:=\mathbb{E}[\lVert\mathbf{x}^{k}-\mathbf{x}^{*}\rVert^{2}]$, we have
\begin{align*}
	\bar{y}_{n}
	=\sum_{k=1}^{n}
	\omega_{n,k}y_{k}
	\quad 
	\text{and}\quad 
	\sum_{k=1}^{n}\omega_{n,k}
	=\sum_{k=1}^{n}
	\dfrac{k^{\alpha}}
	{\sum_{k=1}^{n}k^{\alpha}}=1,
	\quad \forall\,n\geq 1.
\end{align*}
Thus, $\bar{y}_{n}$ is an ergodic sequence of $y_{k}$ with respect to $\omega_{n,k}$. Since $\alpha\geq 0$, then it follows from the integral test inequality that 
\begin{align*}
	\sum_{k=1}^{n}k^{\alpha}
	\geq  \int_{0}^{n}x^{\alpha}\,dx
	=\dfrac{x^{1+\alpha}}{1+\alpha}
	\,\bigg\vert_{0}^{n}
	=\dfrac{n^{1+\alpha}}{1+\alpha}.
\end{align*}
Hence, for any $1\leq k\leq n$, we have
\begin{align*}
	0\leq 
	\dfrac{k^{\alpha}}
	{\sum_{k=1}^{n}k^{\alpha}}
	\leq 
	\dfrac{n^{\alpha}}
	{\sum_{k=1}^{n}k^{\alpha}}
	\leq n^{\alpha}\,\dfrac{1+\alpha}{n^{1+\alpha}}
	=\dfrac{1+\alpha}{n}.
\end{align*}
Then we apply Squeeze theorem in conjunction with $\lim_{n\to\infty}(1+\alpha)/n=0$ to get 
\begin{align*}
	\lim_{n\to\infty}\omega_{n,k}
	=\lim_{n\to\infty}
	\dfrac{k^{\alpha}}{\sum_{k=1}^{n}
		k^{\alpha}}
	=0,\quad
	\forall\,1\leq k\leq n,
\end{align*}
which satisfies the condition of $\omega_{n,k}$ in Proposition \ref{prop-non}. Thus, we have
\begin{align*}
	\lim_{n\to\infty}
	y_{n}=0
	\quad \Longrightarrow\quad 
	\lim_{n\to\infty}
	\bar{y}_{n}=0.
\end{align*}

\item[$(2)$]Convex Setting
\par \citet{sebbouh2021almost} well studied the almost sure convergence of SGD, and the sequence for measuring the behavior of SGD can be regarded as
\begin{align*}
	\bar{y}_{n}
	:=
	\dfrac{\sum_{k=1}^{n}\eta_{k}
	\big(f(\mathbf{x}^{k})-f^{*}\big)}
	{\sum_{k=1}^{n}\eta_{k}},
\end{align*}	
where $\eta_{k}$ is the step size and $f^{*}$ is the minimum value of $f(\mathbf{x})$. Then by setting $\omega_{n,k}:=\eta_{k}/\big(\sum_{k=1}^{n}\eta_{k}\big)$ and $y_{k}:=f(\mathbf{x}^{k})-f^{*}$, we have 
\begin{align*}
	\bar{y}_{n}
	=\sum_{k=1}^{n}
	\omega_{n,k}y_{k}
	\quad \text{and}\quad 
	\sum_{k=1}^{n}\omega_{n,k}=
	\sum_{k=1}^{n}
	\dfrac{\eta_{k}}
	{\sum_{k=1}^{n}\eta_{k}}=1,\quad 
	\forall\,n\geq 1,	
\end{align*}
which indicates that $\bar{y}_{n}$ is the ergodic sequence of $y_{k}$. In particular, \citet{sebbouh2021almost} required the step size $\eta_{k}$ to satisfy $\sum_{k=1}^{\infty}\eta_{k}=\infty$ and $\sum_{k=1}^{\infty}\eta_{k}^{2}<\infty$ for the convergence of $\bar{y}_{n}$. Then we know that $\eta_{k}$ is convergent and therefore bounded, that is, there exists a constant $C\geq 0$ such that $\eta_{k}\leq C$. Thus, for any $1\leq k\leq n$, 
\begin{align*}
	0\leq \omega_{n,k}
	=\dfrac{\eta_{k}}
	{\sum_{k=1}^{n}\eta_{k}}
	\leq \dfrac{C}
	{\sum_{k=1}^{n}\eta_{k}},
\end{align*}
Using Squeeze theorem in conjunction with $\lim_{n\to\infty}C/\big(\sum_{k=1}^{n}\eta_{k}\big)=0$ $(\text{since}\; \sum_{k=1}^{\infty}\eta_{k}=\infty)$, we arrive at
\begin{align*}
	\lim_{n\to\infty}
	\omega_{n,k}
	=\lim_{n\to\infty}
	\dfrac{\eta_{k}}
	{\sum_{k=1}^{n}\eta_{k}}
	=0,\quad \forall\,1\leq k\leq n,
\end{align*}
which fulfills the condition of Proposition \ref{prop-non}. Therefore, we have
\begin{align*}
	\lim_{n\to\infty}
	y_{n}=0
	\quad \Longrightarrow\quad 
	\lim_{n\to\infty}
	\bar{y}_{n}=0.
\end{align*}
\item[$(3)$] Non-convex Setting
\par The following standard average of the gradient norm is often used to measure the performance of stochastic adaptive algorithms, such as Yogi \citep{zaheer2018adaptive} and Super-Adam \citep{huang2021super}.
\begin{align*}
	\bar{y}_{n}
	:=\dfrac{1}{n}
	\sum_{k=1}^{n}
	\mathbb{E}[\lVert\nabla f(\mathbf{x}^{k})\rVert^{2}].
\end{align*}
Let's take $\omega_{n,k}:=1/n$ and $y_{k}:=\mathbb{E}[\lVert\nabla f(\mathbf{x}^{k})\rVert^{2}]$, then
\begin{align*}
	\bar{y}_{n}
	=\sum_{k=1}^{n}\omega_{n,k}
	y_{k}\quad 
	\text{and}\quad 
	\sum_{k=1}^{n}
	\omega_{n,k}
	=\sum_{k=1}^{n}\dfrac{1}{n}
	=1,\quad \forall\,n.
\end{align*}
Then it follows from Definition \ref{ergodic} that $\bar{y}_{n}$ is the ergodic sequence of $y_{k}$. Upon since 
\begin{align*}
	\lim_{n\to\infty}\omega_{n,k}
	=\lim_{n\to\infty}
	\dfrac{1}{n}=0,\quad \forall\,1\leq k\leq n,
\end{align*}
then we can confirm that the condition of $\omega_{n,k}$ in Proposition \ref{prop-non} is satisfied. Thus, 
\begin{align*}
	\lim_{n\to\infty}
	y_{n}=0
	\quad \Longrightarrow\quad 
	\lim_{n\to\infty}
	\bar{y}_{n}=0.
\end{align*}
\end{enumerate}
\end{example}
\par Next, we present two other convergence forms in Example \ref{e3}, referred to as minimum and uniform convergence. Specifically, the minimum and uniform convergence satisfy the definition of ergodic convergence and can be obtained from non-ergodic convergence.

\begin{example}\label{e3}
We discuss the convergence of minimum and uniform output, which is usually used to measure the behavior of stochastic optimization algorithms.
\begin{enumerate}
\item[$(1)$]Minimum Convergence \citep{chen2019convergence,khaled2022better}
\par The sequence of the minimum output is defined as
\begin{align*}
	\bar{y}_{n}
	:=
	\min_{1\leq k\leq n}
	\mathbb{E}[\lVert \nabla f(\mathbf{x}^{k})\rVert^{2}].
\end{align*}
Assume that $m\in[1, n]$ is the index that minimizes the norm of the gradient, that is, $\mathbb{E}[\lVert\nabla f(\mathbf{x}^{m})\rVert^{2}]=\min_{1\leq k\leq n}\mathbb{E}[\lVert\nabla f(\mathbf{x}^{k})\rVert^{2}]$. Let's set $y_{k}:=\mathbb{E}[\lVert\nabla f(\mathbf{x}^{k})\rVert^{2}]$ and
\begin{align*}
	\omega_{n,k}
	:=
	\begin{cases}
		1,&k=m\\
		0,&k\neq m
	\end{cases}.
\end{align*}
Then the following equalities hold.
\begin{align*}
	\bar{y}_{n}
	=\sum_{k=1}^{n}
	\omega_{n,k}y_{k}
	\quad \text{and}
	\quad 
	\sum_{k=1}^{n}
	\omega_{n,k}=
	\omega_{n,m}
	=1,\quad 
	\forall\,n\geq 1,
\end{align*}
which implies that $\bar{y}_{n}$ is the ergodic sequence of $y_{k}$. Thus, the convergence of $y_{n}$ is said to be ergodic. Meanwhile, by the fact that $\lVert\nabla f(\mathbf{x}^{k})\rVert^{2}\geq 0$, we can get 
\begin{align*}
	0\leq \bar{y}_{n}
	=\min_{1\leq k\leq n}
	y_{k}
	\leq y_{n}.
\end{align*}
This indicates that we can obtain $\lim_{n\to\infty}\bar{y}_{n}=0$ from $\lim_{n\to\infty}y_{n}=0$.

\item[$(2)$] Uniform Convergence \citep{zou2018weighted,liu2022hyper}
\par The sequence of the uniform output is described as follows.
\begin{align*}
	\bar{y}_{n}
	:=
	\mathbb{E}\left[\left\| \nabla f(\mathbf{x}^{\tau})\right\|^2\right],
\end{align*}
where $\tau$ is randomly and uniformly chosen from the set $\left\lbrace 1,2,\cdots,n\right\rbrace$. By choosing $y_{k}:=\mathbb{E}[\lVert\nabla f(\mathbf{x}^{k})\rVert^{2}]$ and
\begin{align*}
	\omega_{n,k}
	:=
	\begin{cases}
		1,&k=\tau\\
		0,&k\neq \tau
	\end{cases},
\end{align*}
we can derive 
\begin{align*}
	\bar{y}_{n}
	=\sum_{k=1}^{n}
	\omega_{n,k}y_{k}
	\quad \text{and}
	\quad 
	\sum_{k=1}^{n}
	\omega_{n,k}=
	\omega_{n,\tau}
	=1,\quad 
	\forall\,n\geq 1.
\end{align*}
This means that $\bar{y}_{n}$ is the ergodic sequence of $y_{k}$, and the convergence of $y_{n}$ is said to be ergodic. Since $\tau$ is uniformly chosen from $\{1, 2, \cdots, n\}$, then we have
\begin{align*}
	\bar{y}_{n}
	:=\mathbb{E}\left[\left\| \nabla f(\mathbf{x}^{\tau})\right\|^2\right]
	=\frac{1}{n}\sum_{k=1}^{n}
	\mathbb{E}\left[\left\| \nabla f(\mathbf{x}^k)\right\|^2\right],
\end{align*}	
Thus, using the same technique as Example \ref{e2} with $\alpha=0$, we can acquire  $\lim_{n\to\infty}\bar{y}_{n}=0$ from $\lim_{n\to\infty}y_{n}=0$.
\end{enumerate}
\end{example}

\subsection{Assumptions}\label{jiashe}
In this section, we formalize the assumptions used in our convergence analysis.
\begin{assumption}\label{Assunbia}
The stochastic gradient is an unbiased estimation of the full gradient,  that is, $\mathbb{E}\left[ \mathbf{g}^k|\mathcal{F}^k\right] =\nabla f(\mathbf{x}^k)$, and there exists $M>0$ such that $\left\| \mathbf{g}^k\right\| \le M\ a.s.$
\end{assumption}

\begin{assumption}\label{Assmin}
The function $f$ is lower bounded with $f^*>-\infty$ and is $L$-smooth, that is,  for $\forall\,\mathbf{x},\mathbf{y}\in{\mathbb{R}^d}$, there exists $L>0$, such that $\ \left\| \nabla f(\mathbf{x})-\nabla f(\mathbf{y})\right\|  \le L\left\| \mathbf{x}-\mathbf{y}\right\| $.
\end{assumption}

\par It should be noted that the unbiasedness and boundedness of the stochastic gradient in Assumption \ref{Assunbia} are commonly used in the analysis of stochastic optimization algorithms \citep[e.g.,][]{Nemirovski2008RobustSA,rakhlin2012making,sun2020novel,dong2022stochastic,defazio2022adaptivity}. In addition, Assumption \ref{Assmin} implies the differentiability of $f$, but $f$ can be non-convex. Note that the $L$-smoothness of $f$ is a standard assumption in the non-convex analysis of a class of Adam algorithms \citep[e.g.,][]{chen2019convergence,gadat2022asymptotic,kavis2022high,wang2023convergence}.

\subsection{Adam Algorithm}\label{suanfa}
In this section, we give the pseudo-code of Adam in Algorithm \ref{Gadafom}, where the parameter $\epsilon\geq 0$ is for numerical stability, $\beta_{k}$ and $\theta_{k}$ denote the first-and second-order momentum parameters, respectively.
In particular, if $\beta_k=0$ in Algorithm \ref{Gadafom}, then Adam becomes  RMSProp \citep[see][]{tieleman2012lecture}. Moreover, when $\theta_k=1-1/k$, Algorithm \ref{Gadafom} is equal to AdaFom proposed by \citet{chen2019convergence}. Meanwhile, the fourth step in Algorithm \ref{Gadafom} reveals that each component of $\mathbf{v}^k$ is non-negative for any $k=1,2,\cdots$.
\begin{algorithm}[H]
	\caption{Adam}
	\label{Gadafom}
	\begin{algorithmic}[1]
		\Require
		$\eta_k>0,\ 
		\epsilon\ge0,\ 0\le\beta_k\le\beta<1,\ \theta_k\in(0,1)$
		\Ensure
		$\mathbf{m}^0=\mathbf{0},\mathbf{v}^0=\mathbf{0},\mathbf{x}^1\in\mathbb{R}^d$
		\For{$k=1,2,\cdots$}
		\State{sample the stochastic gradient $\mathbf{g}^k$}
		\State{$\mathbf{m}^k=\beta_k\mathbf{m}^{k-1}+(1-\beta_k)\mathbf{g}^k$}
		\State{$\mathbf{v}^k=\theta_k\mathbf{v}^{k-1}+(1-\theta_k)(\mathbf{g}^k)^2$}
		\State{$\mathbf{x}^{k+1}=\mathbf{x}^k-\eta_k\frac{\mathbf{m}^k}{\sqrt{{\mathbf{v}}^k+\epsilon}}$}
		\EndFor
	\end{algorithmic}
\end{algorithm}

\section{Ergodic Convergence of Adam}\label{ERGODIC}
In this section, we perform the ergodic  convergence rate analysis of Adam. In particular, Sections \ref{Minimum Output} and \ref{Uniform Output and Sufficient Condition} establish the convergence of the minimum and uniform output, respectively. Further, we achieve the almost sure convergence rate of Adam in Section \ref{Almost Sure Convergence}.

\subsection{Minimum Convergence}\label{Minimum Output}
In this section, we present the convergence rate for Adam with minimum output.
\begin{theorem}[Minimum convergence]\label{th1}
Suppose that Assumptions \ref{Assunbia} and \ref{Assmin} are satisfied, and $(\alpha_k)_{k\ge 1}$ is a non-increasing real sequence. Let $(\mathbf{x}^k)_{k\ge 1}$ be the sequence generated by Algorithm \ref{Gadafom} with $\eta_k=\Theta(\alpha_{k})$, that is, there exist  positive constants $C_0$ and $\tilde{C}_0$ such that $C_0\alpha_k\le \eta_{k}\le \tilde{C}_0 \alpha_k$. Then for any $K\ge 1$, we have
\begin{equation}\label{th1.1}
	\begin{aligned}
	\min_{1\le k\le K}\mathbb{E}\left[\left\| \nabla f(\mathbf{x}^k)\right\|^2\right]
	\le
	\frac{C_1
		+C_2\sum_{k=1}^{K}\eta_k(1-\theta_k)+C_3 \sum_{k=1}^{K}\eta_{k}^2}{\sum_{k=1}^{K}\eta_{k}},\nonumber
	\end{aligned}
\end{equation}
where $C_1:=\frac{\sqrt{M^2+\epsilon}\left( f(\mathbf{x}^1)-f^*\right) }{1-\beta}$, $C_2:=\frac{\tilde{C}_0 M^4\sqrt{d(M^2+\epsilon)}}{\epsilon^{\frac{3}{2}}C_0(1-\beta)^2}$ and  $C_3:=\frac{2\tilde{C}_0^2 M^2L\sqrt{M^2+\epsilon}}{\epsilon C_0^2(1-\beta)^2}$ are all finite values.
\end{theorem}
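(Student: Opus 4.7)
The plan is to derive a one-step descent-like inequality from $L$-smoothness and then telescope. Starting from Assumption \ref{Assmin} applied to $\mathbf{x}^{k+1}$ and $\mathbf{x}^k$ and substituting the Adam update,
\begin{equation*}
f(\mathbf{x}^{k+1}) \le f(\mathbf{x}^k) - \eta_k\left\langle \nabla f(\mathbf{x}^k),\frac{\mathbf{m}^k}{\sqrt{\mathbf{v}^k + \epsilon}}\right\rangle + \frac{L\eta_k^2}{2}\left\|\frac{\mathbf{m}^k}{\sqrt{\mathbf{v}^k + \epsilon}}\right\|^2.
\end{equation*}
Since $\mathbf{m}^k$ is a convex combination of past stochastic gradients, Assumption \ref{Assunbia} yields $\|\mathbf{m}^k\| \le M$, and combined with the componentwise bound $\sqrt{\mathbf{v}^k_i+\epsilon}\ge \sqrt{\epsilon}$ the quadratic term is at most $L\eta_k^2 M^2/(2\epsilon)$, ultimately producing $C_3$.

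The inner-product term is where the work concentrates. Because $\mathbf{v}^k$ depends on $\mathbf{g}^k$, I cannot invoke unbiasedness directly, so I would split
\begin{equation*}
\frac{\mathbf{m}^k}{\sqrt{\mathbf{v}^k + \epsilon}} = \frac{\mathbf{m}^k}{\sqrt{\mathbf{v}^{k-1} + \epsilon}} + \mathbf{m}^k\left(\frac{1}{\sqrt{\mathbf{v}^k + \epsilon}} - \frac{1}{\sqrt{\mathbf{v}^{k-1} + \epsilon}}\right).
\end{equation*}
On the first summand, $\mathbf{v}^{k-1}$ is $\mathcal{F}^k$-measurable, so $\mathbb{E}[\mathbf{m}^k\mid\mathcal{F}^k] = \beta_k\mathbf{m}^{k-1}+(1-\beta_k)\nabla f(\mathbf{x}^k)$. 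This yields a main term $(1-\beta_k)\sum_i (\nabla_i f(\mathbf{x}^k))^2/\sqrt{\mathbf{v}^{k-1}_i+\epsilon}$, bounded below by $(1-\beta_k)\|\nabla f(\mathbf{x}^k)\|^2/\sqrt{M^2+\epsilon}$ via $\sqrt{\mathbf{v}^{k-1}_i+\epsilon}\le\sqrt{M^2+\epsilon}$, plus a momentum cross term $\beta_k\langle \nabla f(\mathbf{x}^k),\mathbf{m}^{k-1}/\sqrt{\mathbf{v}^{k-1}+\epsilon}\rangle$ that I would control via Young's inequality (with weight $\lambda=(1-\beta_k)/\beta_k$) so that it absorbs at most half the main term, leaving a residual in $\|\mathbf{m}^{k-1}\|^2$ that is absorbable into an $O(\eta_k)$ contribution by $\|\mathbf{m}^{k-1}\|\le M$. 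On the second summand, the elementary estimate $|1/\sqrt{a+\epsilon}-1/\sqrt{b+\epsilon}|\le |a-b|/(2\epsilon^{3/2})$ combined with $|\mathbf{v}^k_i-\mathbf{v}^{k-1}_i|\le(1-\theta_k)M^2$ shows that the perturbation contributes a term of order $\eta_k(1-\theta_k)\sqrt{d}M^4/\epsilon^{3/2}$, which is the source of $C_2$.

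Putting the pieces together and taking total expectation gives a recursion roughly of the form
\begin{equation*}
\frac{\eta_k(1-\beta_k)}{2\sqrt{M^2+\epsilon}}\,\mathbb{E}[\|\nabla f(\mathbf{x}^k)\|^2] \le \mathbb{E}[f(\mathbf{x}^k)-f(\mathbf{x}^{k+1})] + O(\eta_k(1-\theta_k)) + O(\eta_k^2).
\end{equation*}
Summing for $k=1,\dots,K$ telescopes the $f$-difference and I use $f(\mathbf{x}^{K+1})\ge f^*$. Since $\beta_k\le\beta<1$, pulling out the factor $(1-\beta)/\sqrt{M^2+\epsilon}$ on the left and invoking $\min_k a_k \le \sum_k \eta_k a_k / \sum_k \eta_k$ with $a_k=\mathbb{E}[\|\nabla f(\mathbf{x}^k)\|^2]$ delivers the bound; the two-sided hyperparameter window $C_0\alpha_k\le\eta_k\le\tilde C_0\alpha_k$ is used only to convert the dimension-free constants into the explicit $C_1,C_2,C_3$.

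The main obstacle will be the momentum cross term: the Young's-inequality absorption must leave a strictly positive coefficient in front of $\|\nabla f(\mathbf{x}^k)\|^2$ while the leftover residual in $\|\mathbf{m}^{k-1}\|^2$ must be bounded so that it reinforces, rather than contaminates, the $O(\eta_k^2)$ and $O(\eta_k(1-\theta_k))$ terms. Tracking the precise $(1-\beta)^{-2}$ and $\sqrt{M^2+\epsilon}/\epsilon^{3/2}$ factors through this absorption and the subsequent $(1-\beta)$-division is the delicate bookkeeping step needed to match the stated constants.
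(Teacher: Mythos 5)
Your overall architecture (descent lemma, splitting $\mathbf{m}^k/\sqrt{\mathbf{v}^k+\epsilon}$ into the $\mathcal{F}^k$-measurable part with $\mathbf{v}^{k-1}$ plus a perturbation, bounding the perturbation by $O\big(\sqrt{d}M^4(1-\theta_k)/\epsilon^{3/2}\big)$, telescoping, and finishing with $\min_k a_k \le \sum_k \eta_k a_k/\sum_k\eta_k$) coincides with the paper's. The divergence — and the gap — is exactly at the point you flag as "the main obstacle": the momentum cross term $\beta_k\big\langle \nabla f(\mathbf{x}^k),\,\mathbf{m}^{k-1}/\sqrt{\mathbf{v}^{k-1}+\epsilon}\big\rangle$.

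Your Young's-inequality treatment does not close. Writing $\langle a,b\rangle \ge -\tfrac{\lambda}{2}\|a\|^2-\tfrac{1}{2\lambda}\|b\|^2$, absorbing $\tfrac{\beta_k\lambda}{2}\|\nabla f(\mathbf{x}^k)\|^2$ into half of the main term $(1-\beta_k)\|\nabla f(\mathbf{x}^k)\|^2/\sqrt{M^2+\epsilon}$ forces $\lambda \le (1-\beta_k)/\big(\beta_k\sqrt{M^2+\epsilon}\big)$, and the leftover is then
\begin{equation*}
\frac{\beta_k}{2\lambda}\left\|\frac{\mathbf{m}^{k-1}}{\sqrt{\mathbf{v}^{k-1}+\epsilon}}\right\|^2 \;\ge\; \frac{\beta_k^2\sqrt{M^2+\epsilon}}{2(1-\beta_k)}\left\|\frac{\mathbf{m}^{k-1}}{\sqrt{\mathbf{v}^{k-1}+\epsilon}}\right\|^2,
\end{equation*}
which is $\Theta(1)$ in $k$: the only available bound on $\|\mathbf{m}^{k-1}\|$ is the non-decaying $M$, and nothing in the assumptions makes this quantity small. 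After multiplying by $\eta_k$ in the descent inequality and summing, this residual contributes $\Theta\big(\sum_{k}\eta_k\big)$, so dividing by $\sum_k\eta_k$ leaves a non-vanishing constant in the final bound — you would prove $\min_k \mathbb{E}\|\nabla f(\mathbf{x}^k)\|^2 \le O(1)$, not the stated rate. This is not a bookkeeping issue about $(1-\beta)^{-2}$ factors; the approach structurally cannot yield the theorem for any $\beta>0$.

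The paper's fix is different in kind: instead of killing the cross term pointwise, it shifts the gradient index, writing $\beta_k\langle\nabla f(\mathbf{x}^k),\cdot\rangle = \beta_k\langle\nabla f(\mathbf{x}^{k-1}),\cdot\rangle - \beta_k\langle\nabla f(\mathbf{x}^{k-1})-\nabla f(\mathbf{x}^k),\cdot\rangle$. The first piece is exactly $\beta_k\Theta_{k-1}$ with $\Theta_k := \mathbb{E}\big[\langle\nabla f(\mathbf{x}^k),\mathbf{m}^k/\sqrt{\mathbf{v}^k+\epsilon}\rangle\big]$, giving the recursion $\Theta_k \ge \beta_k\Theta_{k-1}+D_k$ (Lemma~\ref{LemTheta}); the second piece is controlled by $L$-smoothness as $-L\beta_k\eta_{k-1}\|\mathbf{m}^{k-1}/\sqrt{\mathbf{v}^{k-1}+\epsilon}\|^2$, which is $O(\eta_{k-1})$ and hence contributes $O(\eta_k\eta_{k-1})$ after the multiplication by $\eta_k$ — summable against $\sum_k\eta_k^2$ via the geometric-weight double-sum bounds of Lemma~\ref{Lemsum}. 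If you want to keep a one-step (non-recursive) argument, you would need a Lyapunov/potential correction (e.g., an auxiliary iterate of the form $\mathbf{x}^k+\tfrac{\beta}{1-\beta}(\mathbf{x}^k-\mathbf{x}^{k-1})$) rather than Cauchy--Schwarz/Young on the raw cross term.
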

\begin{proof}
	See Appendix \ref{Proof of th1}.
\end{proof}

\par It is worth mentioning  that our work mitigates the dependence of the convergence rate for the Adam algorithm on dimension $d$. Specifically, previous studies by \citet{zou2019sufficient,defossezsimple,zhang2022adam} have achieved the convergence rate of $O(\ln K/\sqrt{K})$, but relied on $O(d)$. In contrast, Theorem \ref{th1} attains a significant improvement by reducing the dependency to $O(\sqrt{d})$. Meanwhile, the condition that $\eta_{k}=\Theta(\alpha_{k})$ with a non-increasing $\alpha_{k}$ relaxes the requirement of decreasing step sizes. A similar assumption can be found in \cite{wang}, showing that the step size satisfying $m/k\leq \eta_{k}\leq M/k$ leads to the optimal convergence rate of SGD.

\begin{corollary}\label{Ergcor}
Suppose that the conditions in Theorem \ref{th1} still hold. By choosing $\theta_k=1-1/k$, and $\eta_k=1/k^q$ for any $0<q\le 1$, we have 
\begin{equation}\label{equation2.49}
    \begin{aligned}
	\min_{1\le k\le K}\mathbb{E}\left[\left\| \nabla f(\mathbf{x}^k)\right\|^2\right]=
	\begin{cases}
	O(1/K^{q}) ,
	& \text{$0<q< \frac{1}{2}$}\\
	O(\ln K/\sqrt{K}), 
	& \text{$q=\frac{1}{2}$}\\
	O(1/K^{1-q}) ,
	& \text{$\frac{1}{2}<q<1$}\\
	O(1/\ln K) ,
	& \text{$q=1$}\\
\end{cases}.\nonumber
\end{aligned}
\end{equation}
\end{corollary}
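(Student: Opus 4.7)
The plan is to take the bound from Theorem \ref{th1} as a black box and simply asymptotically evaluate its right-hand side under the specific choices $\theta_{k}=1-1/k$ and $\eta_{k}=1/k^{q}$. Substituting these gives $\eta_{k}(1-\theta_{k})=1/k^{q+1}$ and $\eta_{k}^{2}=1/k^{2q}$, so the whole task reduces to estimating the three scalar sums
\begin{align*}
S_{1}(K):=\sum_{k=1}^{K}\frac{1}{k^{q}},\qquad S_{2}(K):=\sum_{k=1}^{K}\frac{1}{k^{q+1}},\qquad S_{3}(K):=\sum_{k=1}^{K}\frac{1}{k^{2q}},
\end{align*}
and tracking how the ratio $(C_{1}+C_{2}S_{2}(K)+C_{3}S_{3}(K))/S_{1}(K)$ behaves.

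First I would establish, by the standard integral test, the following asymptotics: $\sum_{k=1}^{K}k^{-p}=\Theta(K^{1-p})$ for $0<p<1$, $\sum_{k=1}^{K}k^{-p}=\Theta(\ln K)$ for $p=1$, and $\sum_{k=1}^{K}k^{-p}=\Theta(1)$ for $p>1$. Applied to our three sums this yields, throughout the whole range $0<q\le 1$: $S_{2}(K)=O(1)$ (since $q+1>1$), while $S_{1}(K)=\Theta(K^{1-q})$ for $0<q<1$ and $S_{1}(K)=\Theta(\ln K)$ for $q=1$, and $S_{3}(K)$ is $\Theta(K^{1-2q})$, $\Theta(\ln K)$, or $\Theta(1)$ according as $q<1/2$, $q=1/2$, or $q>1/2$.

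With these estimates in hand, the four regimes of the statement follow by direct substitution into the Theorem \ref{th1} bound. For $0<q<1/2$, $S_{3}(K)$ dominates the numerator giving $O(K^{1-2q})/\Theta(K^{1-q})=O(1/K^{q})$. For $q=1/2$, $S_{3}(K)=\Theta(\ln K)$ dominates, giving $O(\ln K)/\Theta(\sqrt{K})=O(\ln K/\sqrt{K})$. For $1/2<q<1$, all terms in the numerator are $O(1)$, so the ratio is $O(1)/\Theta(K^{1-q})=O(1/K^{1-q})$. Finally, for $q=1$, the numerator is $O(1)$ and the denominator is $\Theta(\ln K)$, giving $O(1/\ln K)$.

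There is no real obstacle here; the proof is essentially bookkeeping. The only minor subtlety is noting that the constants $C_{1},C_{2},C_{3}$ from Theorem \ref{th1} depend only on problem parameters (and on $C_{0},\tilde{C}_{0}$, which in turn depend on $q$ only through the constants hidden in $\eta_{k}=\Theta(\alpha_{k})$ with $\alpha_{k}=1/k^{q}$), so they can be absorbed into the $O(\cdot)$ without issue. Thus the argument is a clean three-line calculation once the integral-test asymptotics are written down.
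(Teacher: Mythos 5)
Your proposal is correct and is exactly the routine calculation the paper leaves implicit: Corollary \ref{Ergcor} is stated without a separate proof, being a direct substitution of $\theta_k=1-1/k$ and $\eta_k=1/k^q$ into the bound of Theorem \ref{th1} followed by integral-test asymptotics for the three sums, which is precisely what you do. The case analysis and the resulting rates all check out, so nothing further is needed.
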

\par Note that \citet{defossezsimple} obtained the convergence rate of $O(\ln K/\sqrt{K})$ by setting $\theta_k=1-1/K$ and $\eta_k=1/\sqrt{K}$. However, the total number of iterations $K$ remains unknown because it usually varies with the desired accuracy. In contrast, we achieve an $O(\ln K/\sqrt{K}) $ convergence rate by choosing $\theta_k=1-1/k$ and $\eta_k=1/\sqrt{k}$, which are independent of the total number $K$. Moreover, our theoretical analysis is established on arbitrary $K\ge 1$. Nevertheless, \citet{defossezsimple} required $K>\beta/(1-\beta)$, which is large for the standard value of $\beta=0.9$.

\begin{corollary}\label{Ergcor3}
Suppose that the conditions in Theorem \ref{th1} still hold, if we set $\theta_k=1-1/k^{p}$ for any $p>0$ and $\eta_k=1/\sqrt{k}$, then
\begin{equation}
	\begin{aligned}
	\min_{1\le k\le K}\mathbb{E}\left[\left\| \nabla f(\mathbf{x}^k)\right\|^2\right]=
	\begin{cases}
	O(1/K^{p}) ,
	& \text{$0<p< \frac{1}{2}$}\\
	O(\ln K/\sqrt{K}), 
	& \text{$\frac{1}{2}\le p$}\\
	\end{cases}.\nonumber
	\end{aligned}	
\end{equation}
\end{corollary}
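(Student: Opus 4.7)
The plan is to substitute the proposed hyperparameter choices directly into the bound of Theorem \ref{th1} and identify the dominant term in each regime of $p$. With $\eta_k = 1/\sqrt{k}$ and $\theta_k = 1-1/k^p$, the three sums appearing in the bound become
\begin{align*}
\sum_{k=1}^{K}\eta_k = \sum_{k=1}^{K}\frac{1}{\sqrt{k}}, \qquad
\sum_{k=1}^{K}\eta_k(1-\theta_k) = \sum_{k=1}^{K}\frac{1}{k^{p+1/2}},\qquad
\sum_{k=1}^{K}\eta_k^2 = \sum_{k=1}^{K}\frac{1}{k},
\end{align*}
so the heart of the proof is simply estimating these three $p$-series.

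The first and third are standard: integral comparison yields $\sum_{k=1}^{K}1/\sqrt{k}=\Theta(\sqrt{K})$ and $\sum_{k=1}^{K}1/k=\Theta(\ln K)$. For the middle sum I would split on the value of the exponent $p+1/2$ relative to $1$. When $p>1/2$ the sum is bounded by the convergent series $\sum_{k=1}^{\infty}1/k^{p+1/2}$, hence $O(1)$; when $p=1/2$ it equals $\sum 1/k = \Theta(\ln K)$; and when $0<p<1/2$, the integral test gives $\sum_{k=1}^{K}1/k^{p+1/2}=\Theta(K^{1/2-p})$.

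Dividing through by $\sum_{k=1}^{K}\eta_k=\Theta(\sqrt{K})$ turns the bound of Theorem \ref{th1} into
\begin{align*}
\min_{1\le k\le K}\mathbb{E}\left[\|\nabla f(\mathbf{x}^k)\|^2\right]
= O\!\left(\tfrac{1}{\sqrt{K}}\right)
+ O\!\left(\tfrac{1}{\sqrt{K}}\sum_{k=1}^{K}\tfrac{1}{k^{p+1/2}}\right)
+ O\!\left(\tfrac{\ln K}{\sqrt{K}}\right).
\end{align*}
For $p>1/2$ the middle contribution is $O(1/\sqrt{K})$, which is absorbed into $O(\ln K/\sqrt{K})$. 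For $p=1/2$ the middle contribution is exactly $O(\ln K/\sqrt{K})$, matching the third term. For $0<p<1/2$ the middle contribution is $O(K^{1/2-p}/\sqrt{K})=O(1/K^p)$; a quick comparison $\ln K/\sqrt{K} = o(1/K^p)$ (since $K^{p-1/2}\ln K\to 0$ when $p<1/2$) shows the $O(1/K^p)$ term dominates $O(\ln K/\sqrt{K})$ and $O(1/\sqrt{K})$. Combining the three cases yields exactly the piecewise rate claimed in the statement.

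This corollary is essentially a bookkeeping exercise on top of Theorem \ref{th1}, so there is no real obstacle; the only subtlety worth stating carefully is the last comparison showing that $1/K^p$ dominates $\ln K/\sqrt{K}$ throughout the range $0<p<1/2$, so that the $\ln K/\sqrt{K}$ term arising from the variance contribution $\sum\eta_k^2$ does not degrade the rate in that regime.
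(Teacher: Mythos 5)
Your proposal is correct and is exactly the intended argument: the paper gives no separate proof of Corollary \ref{Ergcor3}, treating it as an immediate consequence of Theorem \ref{th1}, and your substitution of $\eta_k=1/\sqrt{k}$, $\theta_k=1-1/k^{p}$ followed by the integral-test estimates of the three $p$-series (with the case split on $p+1/2$ versus $1$ and the final comparison $\ln K/\sqrt{K}=O(1/K^{p})$ for $p<1/2$) is the standard bookkeeping the authors intend. The only hypothesis worth noting explicitly is that $\eta_k=1/\sqrt{k}$ trivially satisfies the condition $\eta_k=\Theta(\alpha_k)$ with $\alpha_k$ non-increasing, which you implicitly use.
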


\par In particular, Corollaries \ref{Ergcor} and  \ref{Ergcor3} demonstrate that Adam can attain the optimal rate of $O(\ln K/\sqrt{K})$ for a wide range of hyperparameter values.

\subsection{Uniform Convergence and Sufficient Condition}\label{Uniform Output and Sufficient Condition}
In this section, we analyze the performance of Adam with uniform output and establish a more relaxed sufficient condition for convergence in Corollary \ref{suffcor}.

\begin{theorem}[Uniform convergence]\label{th4}
Suppose that Assumptions \ref{Assunbia} and \ref{Assmin} are satisfied and $(\alpha_k)_{k\ge 1}$ is a non-increasing real sequence. Let $(\mathbf{x}^k)_{k\ge 1}$ be the sequence generated by Algorithm \ref{Gadafom} with $\eta_k=\Theta(\alpha_{k})$, that is, there exist  positive constants $C_0$ and $\tilde{C}_0$ such that $C_0\alpha_k\le \eta_{k}\le \tilde{C}_0 \alpha_k$. Then for an integer $\tau$ chosen randomly and uniformly from the set $\left\lbrace 1,2,\cdots,K\right\rbrace $ and any $K\ge 1$, we have
\begin{equation}\label{th4.1}
\begin{aligned}
	\mathbb{E}\left[\left\| \nabla f(\mathbf{x}^{\tau})\right\|^2\right]
	\le\frac{ C'_1
		+C'_2\sum_{k=1}^{K}\eta_k{(1-\theta_k)}+C'_3 \sum_{k=1}^{K}\eta_{k}^2}{K\eta_{K}},\nonumber
\end{aligned}
\end{equation}
where  $C'_1:=\frac{\tilde{C}_0\sqrt{M^2+\epsilon}\left( f(\mathbf{x}^1)-f^*\right) }{(1-\beta)C_0}$, $
C'_2:=\frac{\tilde{C}_0^2 M^4\sqrt{d(M^2+\epsilon)}}{\epsilon^{\frac{3}{2}}C_0^2(1-\beta)^2}$ and $
C'_3:=\frac{2\tilde{C}_0^3 M^2L\sqrt{M^2+\epsilon}}{\epsilon C_0^3(1-\beta)^2}$ are all finite values.
\end{theorem}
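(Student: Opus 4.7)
The plan is to reduce Theorem \ref{th4} to the key intermediate estimate that already underlies Theorem \ref{th1}, namely an upper bound on the \emph{weighted} sum $\sum_{k=1}^{K}\eta_{k}\,\mathbb{E}[\lVert\nabla f(\mathbf{x}^{k})\rVert^{2}]$. Indeed, the bound used for the minimum output only enters at the very last step via $\sum_{k=1}^{K}\eta_{k}\cdot\min_{1\le k\le K}\mathbb{E}[\lVert\nabla f(\mathbf{x}^{k})\rVert^{2}]\le\sum_{k=1}^{K}\eta_{k}\,\mathbb{E}[\lVert\nabla f(\mathbf{x}^{k})\rVert^{2}]$, so the entire descent-lemma/telescoping argument behind Theorem \ref{th1} in fact yields
\[
\sum_{k=1}^{K}\eta_{k}\,\mathbb{E}\!\left[\lVert\nabla f(\mathbf{x}^{k})\rVert^{2}\right]
\;\le\;
C_{1}+C_{2}\sum_{k=1}^{K}\eta_{k}(1-\theta_{k})+C_{3}\sum_{k=1}^{K}\eta_{k}^{2},
\]
with the same constants $C_{1},C_{2},C_{3}$ appearing in Theorem \ref{th1}. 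I would state this as the common starting point of the proof.

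Next, the uniform choice of $\tau$ over $\{1,\dots,K\}$ gives $\mathbb{E}[\lVert\nabla f(\mathbf{x}^{\tau})\rVert^{2}]=\frac{1}{K}\sum_{k=1}^{K}\mathbb{E}[\lVert\nabla f(\mathbf{x}^{k})\rVert^{2}]$, so to invoke the weighted bound above I need to compare unit weights with the weights $\eta_{k}$. This is where the assumption $\eta_{k}=\Theta(\alpha_{k})$ with $(\alpha_{k})$ non-increasing becomes essential: for every $1\le k\le K$, monotonicity of $\alpha_{k}$ combined with the two-sided comparison $C_{0}\alpha_{k}\le\eta_{k}\le\tilde{C}_{0}\alpha_{k}$ yields $\eta_{k}\ge C_{0}\alpha_{k}\ge C_{0}\alpha_{K}\ge(C_{0}/\tilde{C}_{0})\eta_{K}$, hence $1\le(\tilde{C}_{0}/C_{0})\,\eta_{k}/\eta_{K}$. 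Summing over $k$ gives
\[
\sum_{k=1}^{K}\mathbb{E}\!\left[\lVert\nabla f(\mathbf{x}^{k})\rVert^{2}\right]
\;\le\;\frac{\tilde{C}_{0}}{C_{0}\,\eta_{K}}
\sum_{k=1}^{K}\eta_{k}\,\mathbb{E}\!\left[\lVert\nabla f(\mathbf{x}^{k})\rVert^{2}\right].
\]

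Combining the two displays and dividing by $K$ produces the bound of Theorem \ref{th4} with the constants multiplied uniformly by $\tilde{C}_{0}/C_{0}$; one checks directly that $C'_{i}=(\tilde{C}_{0}/C_{0})\,C_{i}$ for $i=1,2,3$, which matches the stated $C'_{1},C'_{2},C'_{3}$. The bulk of the technical work (the descent inequality for Adam, controlling the bias from the first-order momentum, and handling the $(1-\theta_{k})$ drift of the second-order momentum) is identical to that of Theorem \ref{th1}, so nothing new has to be done there. The only genuinely new ingredient is the weight-comparison step in the previous paragraph, and the main obstacle is making sure that the $\Theta(\alpha_{k})$ assumption — rather than a strict monotonicity assumption on $\eta_{k}$ itself — suffices; the argument above shows that it does, at the cost of the explicit multiplicative factor $\tilde{C}_{0}/C_{0}$ absorbed into the constants.
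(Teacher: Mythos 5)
Your proposal is correct and follows essentially the same route as the paper: both start from the weighted bound $\sum_{k=1}^{K}\eta_{k}\,\mathbb{E}[\lVert\nabla f(\mathbf{x}^{k})\rVert^{2}]\le C_{1}+C_{2}\sum_{k}\eta_{k}(1-\theta_{k})+C_{3}\sum_{k}\eta_{k}^{2}$ established in the proof of Theorem \ref{th1}, and then use the chain $\eta_{k}\ge C_{0}\alpha_{k}\ge C_{0}\alpha_{K}\ge (C_{0}/\tilde{C}_{0})\eta_{K}$ to pass to the unweighted average, with the resulting factor $\tilde{C}_{0}/C_{0}$ absorbed into the constants exactly as you verified.
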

\begin{proof}
See Appendix \ref{Proof of th4}.
\end{proof}	
\begin{corollary}[SC-Adam]\label{suffcor}
Adam can converge if the following conditions are satisfied
\begin{enumerate}
\item $0\le\beta_k\le\beta<1$;
\item $0<\theta_k<1$;
\item There exists a non-increasing real sequence $(\alpha_k)_{k\ge 1}$ such that $\eta_k=\Theta(\alpha_{k})$;
\item $\left( \sum_{k=1}^{K}\eta_k{(1-\theta_k)}\right) \big/\left( K\eta_{K}\right) =o(1)$;
\item $\left( \sum_{k=1}^{K}\eta_k^2\right) \big/\left( K\eta_{K}\right) =o(1)$.\\
\end{enumerate}
\end{corollary}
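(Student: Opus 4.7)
My plan is to derive SC-Adam directly from Theorem \ref{th4}. Conditions 1 and 3 of the corollary are exactly the structural hypotheses invoked in Theorem \ref{th4}, and together with Assumptions \ref{Assunbia} and \ref{Assmin} (which are in force throughout the paper) they ensure that the constants $C'_1$, $C'_2$, $C'_3$ appearing in the upper bound are finite; condition 2 guarantees the Adam update itself is well-defined. Thus Theorem \ref{th4} applies and yields, for every $K \ge 1$,
\begin{align*}
    \mathbb{E}\!\left[\|\nabla f(\mathbf{x}^{\tau})\|^{2}\right]
    \le \frac{C'_1}{K\eta_K}
    + C'_2 \cdot \frac{\sum_{k=1}^{K}\eta_k(1-\theta_k)}{K\eta_K}
    + C'_3 \cdot \frac{\sum_{k=1}^{K}\eta_k^{2}}{K\eta_K}.
\end{align*}
The task then reduces to showing that each of the three summands on the right-hand side tends to zero as $K\to\infty$.

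The second and third summands vanish at once: $C'_2$ and $C'_3$ are constants independent of $K$, and conditions 4 and 5 are precisely the statements that $\sum_{k=1}^{K}\eta_k(1-\theta_k)/(K\eta_K)$ and $\sum_{k=1}^{K}\eta_k^{2}/(K\eta_K)$ are $o(1)$. The only remaining step is to prove $C'_1/(K\eta_K)=o(1)$, i.e., $K\eta_K\to\infty$.

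The main obstacle, mild as it is, sits exactly here, because $K\eta_K\to\infty$ is not listed explicitly among the five conditions. My plan is to derive it as a free consequence of condition 5, using the observation that $\sum_{k=1}^{K}\eta_k^{2}$ is bounded below by the strictly positive constant $\eta_1^{2}$. Concretely, for any $\varepsilon>0$, condition 5 produces a $K_0$ such that $\sum_{k=1}^{K}\eta_k^{2}/(K\eta_K)<\varepsilon$ whenever $K\ge K_0$, and rearranging yields $K\eta_K>\eta_1^{2}/\varepsilon$. Since $\varepsilon$ is arbitrary, $K\eta_K\to\infty$, so $C'_1/(K\eta_K)\to 0$ as well. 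Adding the three $o(1)$ contributions and using that the left-hand side is nonnegative gives $\lim_{K\to\infty}\mathbb{E}[\|\nabla f(\mathbf{x}^{\tau})\|^{2}]=0$, which is the (ergodic) convergence of Adam claimed in the corollary. Beyond this single observation, the argument is a direct invocation of Theorem \ref{th4}.
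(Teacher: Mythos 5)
Your proposal is correct and follows the route the paper intends: the corollary is stated as a direct consequence of Theorem \ref{th4}, and you simply verify that each of the three terms in its bound is $o(1)$. Your derivation of $K\eta_K\to\infty$ from condition 5 via the lower bound $\sum_{k=1}^{K}\eta_k^2\ge\eta_1^2>0$ is valid and neatly fills in the one step the paper leaves implicit (it gives no explicit proof of the corollary).
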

	
\par The sufficient condition of \textbf{SC-Zou} for the convergence of Adam proposed by \citet[Corollary 9]{zou2019sufficient} is: 1.\ $0\le\beta_k\le\beta<1$; 2.\ $0<\theta_k<1$ and $\theta_k$ is non-decreasing; 3.\ There exist a non-increasing sequence $(\alpha_k)_{k\ge 1}$ and a positive constant $C$ such that $\alpha_k\le\eta_k/\sqrt{1-\theta_k}\le C\alpha_k$; 4.\ $\big(\sum_{k=1}^{K}\eta_k\sqrt{1-\theta_k}\big)\big/\left( K\eta_{K}\right) =o(1)$. It is worth mentioning  that \textbf{SC-Adam} in Corollary \ref{suffcor} is strictly weaker than \textbf{SC-Zou}. To illustrate the generality of our condition, we present some examples as follows.
\begin{itemize}
\item By choosing $\theta_k=1-1/k$ and $\eta_k=1/{k^q}$ for any $0<q<1/2$, it follows from the conclusion of Theorem \ref{th4} that Adam converges at a rate of $	O(1/K^{q})$. However, since $q<1/2$, then the sequence  $\eta_k/\sqrt{1-\theta_k}=k^{\frac{1}{2}-q}$ must be increasing. Therefore, such a choice of parameters does not satisfy the third point in \textbf{SC-Zou}.
\item By employing the selection  $\theta_k=1-\Theta(1/k)$ and $\eta_k=1/k^{q}$ for any $0<q< 1$, it can be deduced from Corollary \ref{suffcor} that Adam exhibits convergence. However, $\theta_k$ is not monotonically non-decreasing, thus failing to satisfy the second criterion in \textbf{SC-Zou}.
\end{itemize}
\par Moreover, Proposition \ref{contopro} in Appendix \ref{B} shows that \textbf{SC-Adam} is more relaxed than \textbf{SC-Zou} by rigorous proof.

\subsection{Almost Sure Convergence}\label{Almost Sure Convergence}
In this section, we provide the first almost sure convergence rate analysis for Adam with a decreasing step size.

\begin{theorem}\label{th6}
Let $(\mathbf{x}^k)_{k\ge 1}$ be the sequence generated by Algorithm \ref{Gadafom}, and  Assumptions \ref{Assunbia} and \ref{Assmin} hold. If we suppose that
\begin{align}\label{th6.2}
	\sum_{k=1}^{\infty}\frac{\eta_k}{\sum_{i=1}^{k-1}\eta_i}=\infty,\quad\sum_{k=1}^{\infty}\eta_{k}^2<\infty,\quad\sum_{k=1}^{\infty}\eta_{k}(1-\theta_k)<\infty,
\end{align}
and $\eta_k$ is decreasing, then for any $K\ge 1$, we have
\begin{align}
	\min_{1\le k\le K}\left\|  \nabla f(\mathbf{x}^k)\right\|  ^2=o\left( \frac{1}{\sum_{k=1}^{K}\eta_{k}}\right) \ a.s. \nonumber
\end{align}
\end{theorem}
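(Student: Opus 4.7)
The plan is to combine a one-step stochastic descent inequality with the Robbins--Siegmund supermartingale convergence theorem to establish that $\sum_{k=1}^\infty \eta_k \|\nabla f(\mathbf{x}^k)\|^2 < \infty$ almost surely, and then translate this summability into the claimed $o\bigl(1/\sum_{k=1}^K\eta_k\bigr)$ rate on the running minimum by a purely deterministic tail-comparison argument. The structure parallels the ergodic analysis behind Theorem \ref{th1}, but the telescoping in expectation is replaced by a conditional-expectation recursion of supermartingale type.

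First I would revisit the descent estimate underlying Theorem \ref{th1}. Using $L$-smoothness on $\mathbf{x}^{k+1}=\mathbf{x}^k-\eta_k\mathbf{m}^k/\sqrt{\mathbf{v}^k+\epsilon}$, splitting $\mathbf{m}^k/\sqrt{\mathbf{v}^k+\epsilon}$ into its $\mathcal{F}^k$-measurable surrogate $\mathbf{m}^k/\sqrt{\mathbf{v}^{k-1}+\epsilon}$ plus a swap error, and decomposing $\mathbf{m}^k=(1-\beta_k)\mathbf{g}^k+\beta_k\mathbf{m}^{k-1}$, the unbiasedness of $\mathbf{g}^k$ (Assumption \ref{Assunbia}) together with the bounds $\|\mathbf{g}^k\|\le M$, $\sqrt{\mathbf{v}^{k-1}+\epsilon}\le \sqrt{M^2+\epsilon}$ yield, after standard manipulations, an inequality of the form
\begin{align*}
\mathbb{E}[f(\mathbf{x}^{k+1})\mid\mathcal{F}^k]
\le f(\mathbf{x}^k) - \tfrac{c_1\,\eta_k}{\sqrt{M^2+\epsilon}}\|\nabla f(\mathbf{x}^k)\|^2 + c_2\,\eta_k(1-\theta_k) + c_3\,\eta_k^2,
\end{align*}
where $c_1,c_2,c_3$ are finite constants depending on $L,M,\epsilon,\beta,d$. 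The factor $1-\theta_k$ arises precisely from controlling the error in the swap $\sqrt{\mathbf{v}^k+\epsilon}\leftrightarrow \sqrt{\mathbf{v}^{k-1}+\epsilon}$, while the $\eta_k^2$ factor comes from the quadratic Lipschitz term.

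Since the hypotheses include $\sum_k\eta_k(1-\theta_k)<\infty$ and $\sum_k\eta_k^2<\infty$, the non-negative process $a_k:=f(\mathbf{x}^k)-f^*$ satisfies the premise of the Robbins--Siegmund theorem with $b_k=(c_1\eta_k/\sqrt{M^2+\epsilon})\|\nabla f(\mathbf{x}^k)\|^2$ and $c_k=c_2\eta_k(1-\theta_k)+c_3\eta_k^2$. The theorem then delivers both the a.s. convergence of $a_k$ and the summability $\sum_{k=1}^\infty \eta_k\|\nabla f(\mathbf{x}^k)\|^2 < \infty$ almost surely. Finally I would invoke the deterministic lemma: if $b_k\ge 0$, $\eta_k>0$ is decreasing with $\sum_k\eta_k=\infty$ (which is implied by the first hypothesis in \eqref{th6.2}, since a convergent $\sum\eta_k$ would force $\eta_k/\sum_{i<k}\eta_i$ to be summable), and $\sum_k\eta_k b_k<\infty$, then $\min_{1\le k\le K} b_k=o\bigl(1/\sum_{k=1}^K\eta_k\bigr)$. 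The short proof is a tail argument: given $\varepsilon>0$ pick $N$ with $\sum_{k\ge N}\eta_k b_k\le\varepsilon$, so $\min_{1\le k\le K}b_k\cdot\sum_{k=N}^K\eta_k\le\varepsilon$; since $\sum_{k=N}^K\eta_k\big/\sum_{k=1}^K\eta_k\to 1$, we conclude $\min_{1\le k\le K}b_k\cdot\sum_{k=1}^K\eta_k\to 0$. Applying this path-by-path on the full-measure event supplied by Robbins--Siegmund, with $b_k=\|\nabla f(\mathbf{x}^k)\|^2$, gives the theorem.

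The main obstacle is the first step: neither the numerator $\mathbf{m}^k$ nor the denominator $\sqrt{\mathbf{v}^k+\epsilon}$ of the update is $\mathcal{F}^k$-measurable, so the naive computation does not produce a clean $\|\nabla f(\mathbf{x}^k)\|^2$ term when one conditions on $\mathcal{F}^k$. Quantifying the swap error in a way whose tail is summable is exactly what forces the hypothesis $\sum_k\eta_k(1-\theta_k)<\infty$ to appear, and performing both the numerator and denominator decoupling simultaneously—without losing the gradient-squared lower bound—is the delicate part of the argument. Everything downstream (Robbins--Siegmund plus the $o(\cdot)$ lemma) is then essentially mechanical.
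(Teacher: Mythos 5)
Your overall architecture coincides with the paper's: first establish $\sum_{k=1}^{\infty}\eta_k\lVert\nabla f(\mathbf{x}^k)\rVert^2<\infty$ almost surely, then convert this summability into the $o\bigl(1/\sum_{k=1}^{K}\eta_k\bigr)$ rate on the running minimum by a deterministic argument. Your downstream half is correct and is in fact a self-contained proof of the lemma the paper imports from the literature (Lemma~\ref{Lema.s.}); your observation that the first condition in \eqref{th6.2} forces $\sum_k\eta_k=\infty$ is also right and is exactly what your tail comparison needs. Where you diverge is in how the almost-sure summability is obtained, and that is where there is a genuine gap.

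The one-step conditional inequality $\mathbb{E}[f(\mathbf{x}^{k+1})\mid\mathcal{F}^k]\le f(\mathbf{x}^k)-c_1\eta_k\lVert\nabla f(\mathbf{x}^k)\rVert^2/\sqrt{M^2+\epsilon}+c_2\eta_k(1-\theta_k)+c_3\eta_k^2$ does not follow from ``standard manipulations,'' and the obstruction is not the $\mathbf{v}^k\leftrightarrow\mathbf{v}^{k-1}$ swap you single out (that swap is controlled by Lemma~\ref{yogiLem} and produces precisely the summable $\eta_k(1-\theta_k)$ term). The real problem is the momentum cross term: conditioning on $\mathcal{F}^k$ and decomposing $\mathbf{m}^k=\beta_k\mathbf{m}^{k-1}+(1-\beta_k)\mathbf{g}^k$ leaves the term $-\eta_k\beta_k\bigl\langle\nabla f(\mathbf{x}^k),\mathbf{m}^{k-1}/\sqrt{\mathbf{v}^{k-1}+\epsilon}\bigr\rangle$, which has no sign and whose crude bound is of order $\eta_k\beta M^2/\sqrt{\epsilon}$ --- \emph{not} summable, since $\sum_k\eta_k=\infty$. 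So it cannot be absorbed into $c_2\eta_k(1-\theta_k)+c_3\eta_k^2$, and the Robbins--Siegmund premise fails as stated. The paper sidesteps this entirely: Lemma~\ref{LemTheta} unrolls the recursion $\Theta_k\ge\beta_k\Theta_{k-1}+D_k$ in \emph{total} expectation, Lemma~\ref{Lemsum} resums the resulting geometric weights, and the cumulative bound \eqref{th1.13} on $\sum_{k=1}^{K}\eta_k\mathbb{E}[\lVert\nabla f(\mathbf{x}^k)\rVert^2]$ is then converted to pathwise summability by the Monotone Convergence Theorem applied to the non-negative series --- no supermartingale argument is needed. Your route could be repaired by working with a momentum-corrected Lyapunov function (e.g.\ exploiting $\mathbf{m}^{k-1}/\sqrt{\mathbf{v}^{k-1}+\epsilon}=(\mathbf{x}^{k-1}-\mathbf{x}^k)/\eta_{k-1}$ to fold the cross term into a telescoping correction of $f$), but that construction is the missing idea, and without it the proposed descent inequality is unsupported.
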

 \begin{proof}
See Appendix \ref{Proof of th6}.
 \end{proof}

\par In the non-convex setting, \citet{lei2019stochastic} obtained a convergence rate of $O\big( {1}/{\sum_{k=1}^{K}\eta_{k}}\big)$ in expectation for SGD. In addition, 
\citet{sebbouh2021almost} derived the first almost sure convergence rate of $o\big( {1}/{\sum_{k=1}^{K}\eta_{k}}\big)$ for SGD when the objective functions are convex. Moreover, \citet{liu2022almost} analyzed the almost sure convergence of SGD, SHB, and SNAG, obtaining the rate of $o\big({1}/{\sum_{k=1}^{K}\eta_{k}}\big)$ for non-convex functions. To the best of our knowledge, Theorem \ref{th6} contributes the first almost sure convergence rate analysis for Adam in the non-convex setting, thereby significantly advancing the understanding of the trajectory-wise performance of Adam. Furthermore, by specifying the step sizes $\eta_{k}$, we attain a concrete convergence rate in the following corollary. 
	
\begin{corollary}\label{cor4}
In particular, if Assumptions \ref{Assunbia} and \ref{Assmin} are satisfied, then by choosing  $\eta_k=1/k^{q}$ for any $1/2<q< 1$ and $\theta_k=1-1/k^{p}$ for any $p>1-q$, we have
\begin{equation}
	\begin{aligned}
		\min_{1\le k\le K}\left\| \nabla f(\mathbf{x}^k)\right\|^2=o\left( \frac{1}{K^{1-q}}\right) \ a.s. \nonumber
	\end{aligned}
	\end{equation}
\end{corollary}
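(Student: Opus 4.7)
The plan is to derive Corollary \ref{cor4} as a direct specialization of Theorem \ref{th6}, so the whole job reduces to verifying the four hypotheses of that theorem for the stated choice $\eta_k=1/k^q$ with $1/2<q<1$ and $\theta_k=1-1/k^p$ with $p>1-q$, and then identifying $\sum_{k=1}^{K}\eta_k$ asymptotically. Monotonicity of $\eta_k$ is immediate from $q>0$, and the remaining conditions are all classical $p$-series estimates.

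First I would handle the two easy summability conditions. Since $2q>1$, the sum $\sum_{k=1}^\infty \eta_k^2=\sum_{k=1}^\infty k^{-2q}$ converges, giving $\sum_k \eta_k^2<\infty$. For the coupling between step size and second-order parameter, $\eta_k(1-\theta_k)=k^{-(p+q)}$, and the condition $p>1-q$ is precisely $p+q>1$, hence $\sum_k \eta_k(1-\theta_k)<\infty$. Next I would use the integral test to record the two-sided bound $\sum_{i=1}^{k-1}\eta_i=\Theta(k^{1-q})$ (valid because $1-q>0$), which will be used twice below.

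With this estimate in hand, I verify the divergence condition by writing
\begin{align*}
\frac{\eta_k}{\sum_{i=1}^{k-1}\eta_i}=\Theta\!\left(\frac{k^{-q}}{k^{1-q}}\right)=\Theta\!\left(\frac{1}{k}\right),
\end{align*}
so $\sum_{k=1}^\infty \eta_k/\sum_{i=1}^{k-1}\eta_i$ diverges as a harmonic-type series. All hypotheses of Theorem \ref{th6} now hold, so the theorem yields
\begin{align*}
\min_{1\le k\le K}\|\nabla f(\mathbf{x}^k)\|^2=o\!\left(\frac{1}{\sum_{k=1}^{K}\eta_k}\right)\quad a.s.
\end{align*}
Finally I would replace $\sum_{k=1}^{K}\eta_k$ by its asymptotic order: again by the integral test, $\sum_{k=1}^{K}k^{-q}=\Theta(K^{1-q})$ for $1/2<q<1$, so the right-hand side simplifies to $o(1/K^{1-q})$ a.s., which is the claimed rate.

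There is no real obstacle here — the proof is pure verification — and the only mild care point is ensuring the integral-test comparison $\sum_{i=1}^{k-1}\eta_i=\Theta(k^{1-q})$ actually uses $1-q>0$ so that the lower bound does not collapse; the strict inequality $q<1$ is what rules out the logarithmic case. The hypothesis $q>1/2$ is used exactly once (to get $\sum\eta_k^2<\infty$), and the hypothesis $p>1-q$ is used exactly once (to get $\sum\eta_k(1-\theta_k)<\infty$), so every assumption in the statement is accounted for.
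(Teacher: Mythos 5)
Your proposal is correct and follows essentially the same route as the paper: verify the three summability/divergence conditions of Theorem \ref{th6} via $p$-series and integral-test comparisons, invoke the theorem, and convert $\sum_{k=1}^{K}\eta_k$ to $\Theta(K^{1-q})$. The only cosmetic difference is that you record two-sided $\Theta$ bounds where the paper uses only the one-sided inequality needed at each step; the substance is identical.
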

\begin{proof}
See Appendix \ref{Proof of cor4}.
\end{proof}
\par Note that by choosing $q\to 1/2$ in the step size $\eta_{k}$, then for $\theta_{k}=1-1/k^{p}$ with $p>1-q$, the almost sure convergence rate of Adam is arbitrarily close to  $o(1/\sqrt{K})$.

\section{Non-ergodic Convergence of Adam}\label{Non-ergodic Convergence of Adam}
In this section, we study the non-ergodic convergence of Adam in two aspects. Specifically, we establish the non-ergodic convergence for gradient sequences in Section \ref{Non-ergodic Convergence for Gradient Sequence}, followed by the analysis of the non-ergodic convergence for function values in Section \ref{PLErgcor}. In particular, as shown in Proposition \ref{prop-non}, the non-ergodic convergence developed in Theorems \ref{th2}, \ref{th5}, and \ref{th3} carries greater significance. 
\subsection{Non-ergodic Convergence for Gradient Sequences}\label{Non-ergodic Convergence for Gradient Sequence}	
In this section, we prove that the gradient norm of Adam with the last iterate output converges to zero.
\begin{theorem}\label{th2}
Let $(\mathbf{x}^k)_{k\ge 1}$ be the sequence generated by Algorithm \ref{Gadafom}, and Assumptions \ref{Assunbia} and \ref{Assmin} hold.  If we suppose   $\sum_{k=1}^{\infty}\eta_{k}=\infty$, $ \sum_{k=1}^{\infty}\eta_{k}^2<\infty$, $ \sum_{k=1}^{\infty}\eta_{k}(1-\theta_k)<\infty$ and there exists a non-increasing real sequence $(\alpha_k)_{k\ge 1}$ such that  $\eta_k=\Theta(\alpha_{k})$, then we have
\begin{equation}\label{th2.1}
	\begin{aligned}
		\lim_{k\rightarrow\infty} \left\| \nabla f(\mathbf{x}^k)\right\|=0\ a.s.,\ 
		\text{and}\ \lim_{k\rightarrow\infty} \mathbb{E}\left[\left\| \nabla f(\mathbf{x}^k)\right\|\right]=0.\nonumber
	\end{aligned}
\end{equation}
\end{theorem}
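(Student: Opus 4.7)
The plan is to combine a Robbins--Siegmund-style supermartingale argument with an oscillation-control step that upgrades $\liminf\|\nabla f(\mathbf{x}^k)\|=0$ to a true limit. The starting point is the $L$-smoothness descent inequality applied to $\mathbf{x}^{k+1}=\mathbf{x}^k-\eta_k\mathbf{m}^k/\sqrt{\mathbf{v}^k+\epsilon}$, giving
\[
f(\mathbf{x}^{k+1})\le f(\mathbf{x}^k)-\eta_k\Big\langle \nabla f(\mathbf{x}^k),\tfrac{\mathbf{m}^k}{\sqrt{\mathbf{v}^k+\epsilon}}\Big\rangle+\tfrac{L\eta_k^2}{2}\Big\|\tfrac{\mathbf{m}^k}{\sqrt{\mathbf{v}^k+\epsilon}}\Big\|^2.
\]
By Assumption~\ref{Assunbia} each coordinate of $\mathbf{m}^k$ is bounded by $M$ and $\sqrt{\mathbf{v}^k_i+\epsilon}\ge\sqrt{\epsilon}$, so the quadratic term is dominated by $L\eta_k^2 dM^2/(2\epsilon)$, which is summable under $\sum_k\eta_k^2<\infty$.

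The main obstacle is the inner-product term: $\mathbf{m}^k$ is a biased exponential moving average of stochastic gradients, and the preconditioner $\sqrt{\mathbf{v}^k+\epsilon}$ depends on the current $\mathbf{g}^k$, so the term is not directly a martingale difference. I would decouple these by first replacing $\sqrt{\mathbf{v}^k+\epsilon}$ with the $\mathcal{F}^k$-measurable surrogate $\sqrt{\mathbf{v}^{k-1}+\epsilon}$ and absorbing the replacement error $|1/\sqrt{\mathbf{v}^k+\epsilon}-1/\sqrt{\mathbf{v}^{k-1}+\epsilon}|$, which by the update $\mathbf{v}^k-\mathbf{v}^{k-1}=(1-\theta_k)((\mathbf{g}^k)^2-\mathbf{v}^{k-1})$ contributes a bound of order $\eta_k(1-\theta_k)$; this is summable by hypothesis. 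Next I would unroll $\mathbf{m}^k=\beta_k\mathbf{m}^{k-1}+(1-\beta_k)\mathbf{g}^k$, extract the signal $(1-\beta_k)\langle\nabla f(\mathbf{x}^k),\nabla f(\mathbf{x}^k)/\sqrt{\mathbf{v}^{k-1}+\epsilon}\rangle\ge\frac{1-\beta}{\sqrt{M^2+\epsilon}}\|\nabla f(\mathbf{x}^k)\|^2$ using $\sqrt{\mathbf{v}^k_i+\epsilon}\le\sqrt{M^2+\epsilon}$, and either take conditional expectations to annihilate the mean-zero noise or control it via $\eta_k^2$ and $\eta_k(1-\theta_k)$. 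The net outcome is a Robbins--Siegmund inequality
\[
\mathbb{E}\!\left[f(\mathbf{x}^{k+1})\mid\mathcal{F}^k\right]\le f(\mathbf{x}^k)-c\,\eta_k\|\nabla f(\mathbf{x}^k)\|^2+A_k,
\]
with $c=(1-\beta)/\sqrt{M^2+\epsilon}$ and $\sum_k A_k<\infty$ a.s. (since $A_k=O(\eta_k^2)+O(\eta_k(1-\theta_k))$). The supermartingale convergence theorem then yields that $f(\mathbf{x}^k)$ converges almost surely and $\sum_k\eta_k\|\nabla f(\mathbf{x}^k)\|^2<\infty$ a.s.; combined with $\sum_k\eta_k=\infty$, this already forces $\liminf_{k\to\infty}\|\nabla f(\mathbf{x}^k)\|=0$ a.s.

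To promote this $\liminf$ to a full limit I would invoke a standard oscillation argument. From the componentwise bounds, $\|\mathbf{x}^{k+1}-\mathbf{x}^k\|\le\eta_k\sqrt{d}M/\sqrt{\epsilon}$, so $L$-smoothness gives $\bigl|\,\|\nabla f(\mathbf{x}^{k+1})\|-\|\nabla f(\mathbf{x}^k)\|\,\bigr|\le L\sqrt{d}M\eta_k/\sqrt{\epsilon}$, which vanishes since $\sum_k\eta_k^2<\infty$ and $\eta_k=\Theta(\alpha_k)$ with $\alpha_k$ non-increasing force $\eta_k\to 0$. If $\limsup_k\|\nabla f(\mathbf{x}^k)\|\ge 2\delta>0$ held on a set of positive probability, one could extract arbitrarily long excursion intervals on which $\|\nabla f(\mathbf{x}^k)\|\ge\delta$ (the per-step oscillation being $o(1)$, crossing from $2\delta$ back down to $\delta$ requires accumulated step size bounded below by a multiple of $\delta/L$), and the accumulated weight $\sum\eta_k\|\nabla f(\mathbf{x}^k)\|^2\ge\delta^2\sum\eta_k$ over infinitely many such excursions would contradict the finiteness established above. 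Hence $\lim_{k\to\infty}\|\nabla f(\mathbf{x}^k)\|=0$ a.s.

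For the expectation statement, Assumption~\ref{Assunbia} gives $\|\nabla f(\mathbf{x}^k)\|=\|\mathbb{E}[\mathbf{g}^k\mid\mathcal{F}^k]\|\le\mathbb{E}[\|\mathbf{g}^k\|\mid\mathcal{F}^k]\le M$ a.s., so $\{\|\nabla f(\mathbf{x}^k)\|\}$ is uniformly bounded by $M$; the dominated convergence theorem transfers the almost sure limit to $\lim_{k\to\infty}\mathbb{E}[\|\nabla f(\mathbf{x}^k)\|]=0$. The genuinely hard step is the momentum--preconditioner decoupling that sets up the Robbins--Siegmund inequality; once that inequality is in place, the remaining deductions assemble routinely.
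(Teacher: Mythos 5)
Your overall architecture coincides with the paper's: establish $\sum_{k}\eta_{k}\lVert\nabla f(\mathbf{x}^{k})\rVert^{2}<\infty$ a.s., combine it with $\sum_{k}\eta_{k}=\infty$ and the Lipschitz increment bound $\bigl|\lVert\nabla f(\mathbf{x}^{k+1})\rVert-\lVert\nabla f(\mathbf{x}^{k})\rVert\bigr|\le (LM/\sqrt{\epsilon})\,\eta_{k}$ to upgrade the $\liminf$ to a full limit, and finish with dominated convergence using the bound $\lVert\nabla f(\mathbf{x}^{k})\rVert\le M$. Your excursion argument is, in substance, a from-scratch proof of the paper's Lemma \ref{Lemab}, which the authors simply cite; that part of your proposal is sound and equivalent.

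The one genuine gap is in how you set up the summability of $\sum_{k}\eta_{k}\lVert\nabla f(\mathbf{x}^{k})\rVert^{2}$. After one step of unrolling $\mathbf{m}^{k}=\beta_{k}\mathbf{m}^{k-1}+(1-\beta_{k})\mathbf{g}^{k}$ and conditioning on $\mathcal{F}^{k}$, the leftover term is $\beta_{k}\bigl\langle\nabla f(\mathbf{x}^{k}),\mathbf{m}^{k-1}/\sqrt{\mathbf{v}^{k-1}+\epsilon}\bigr\rangle$. This term is $\mathcal{F}^{k}$-measurable, has no sign control, and is $O(1)$ in magnitude — it is neither ``mean-zero noise'' nor of order $\eta_{k}^{2}$ or $\eta_{k}(1-\theta_{k})$, so the claimed per-step inequality $\mathbb{E}[f(\mathbf{x}^{k+1})\mid\mathcal{F}^{k}]\le f(\mathbf{x}^{k})-c\,\eta_{k}\lVert\nabla f(\mathbf{x}^{k})\rVert^{2}+A_{k}$ with $\sum_{k}A_{k}<\infty$ does not follow from what you wrote; multiplying the leftover by $\eta_{k}$ only yields a term of order $\eta_{k}$, whose sum diverges. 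The missing idea is to unroll the momentum across \emph{all} past iterations: the paper's Lemma \ref{LemTheta} establishes the recursion $\Theta_{k}\ge\beta_{k}\Theta_{k-1}+D_{k}$ in total expectation, iterates it down to $\Theta_{0}=0$, and then controls the resulting double sums $\sum_{k}\eta_{k}\sum_{j\le k}\beta^{k-j}\eta_{j-1}$ and $\sum_{k}\eta_{k}\sum_{j\le k}\beta^{k-j}(1-\theta_{j})$ via the summation-exchange estimates of Lemma \ref{Lemsum}, using the $L$-smooth drift bound $\lVert\nabla f(\mathbf{x}^{j-1})-\nabla f(\mathbf{x}^{j})\rVert\le L\eta_{j-1}M/\sqrt{\epsilon}$ from Lemma \ref{Lemmbound}. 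This produces the bound of Theorem \ref{th1} in total expectation, $\sum_{k}\eta_{k}\mathbb{E}[\lVert\nabla f(\mathbf{x}^{k})\rVert^{2}]<\infty$, and the a.s. statement you need is then recovered by the monotone convergence theorem (as in the proof of Theorem \ref{th6}), not by a pathwise supermartingale argument. So your route is repairable, but the repair is precisely the step you deferred as ``the genuinely hard step,'' and as stated the supermartingale inequality you rely on is not available.
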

\begin{proof}
See Appendix \ref{Proof of th2}.
\end{proof}
\par In fact, the conditions of $\eta_{k}$ and $\theta_k$ in Theorem \ref{th2} are easy to be satisfied. On the one hand, the classical step size conditions  $\sum_{k=1}^{\infty}\eta_{k}=\infty$ and $\sum_{k=1}^{\infty}\eta_{k}^2<\infty$  are usually used in the analysis of stochastic algorithms  \citep[e.g.,][]{bertsekas2000gradient,bottou2018optimization,LIU202327}. In particular, \citet{new} argued that the conditions $\sum_{k=}^{\infty}\eta_{k}=\infty$ and $\sum_{k=}^{\infty}\eta_{k}^{2}<\infty$ are sufficient for the almost sure convergence of SGD in the strongly convex setting. On the other hand, the conditions  $\sum_{k=1}^{\infty}\eta_{k}(1-\theta_k)<\infty$ and $\eta_k=\Theta(\alpha_{k})$ with a non-increasing $\alpha_{k}$ often hold for adaptive step size algorithms. For example, \citet{chen2019convergence} obtained the convergence of AdaGrad with EMA momentum (AdaEMA) by choosing $\theta_k=1-1/k$ and $\eta_{k}=\eta/\sqrt{k}$, and \citet{mukkamala2017variants} achieved the convergence for RMSProp when setting $\theta_k=1-\alpha/k$ with $\alpha\in(0,1]$ and $\eta_{k}=\eta/\sqrt{k}$.

\subsection{Non-ergodic Convergence for Function Values}\label{PLErgcor}
In this section, we focus on the performance of Adam when the non-convex objectives satisfy the PL condition. Specifically, we obtain the non-ergodic convergence and non-ergodic convergence rate for function values in Theorems \ref{th5} and \ref{th3}, respectively.

\begin{assumption}\label{Asspl}	
The function $f$ satisfies the PL condition,  that is, for $\forall\,\mathbf{x}\in{\mathbb{R}^d}$, there exists $v>0$, such that $\left\| \nabla f(\mathbf{x})\right\|^2\ge2v(f(\mathbf{x})-f^*)$.
\end{assumption}

\par Note that if $f$ exhibits strong convexity, then $f$ must be a convex function that satisfies the PL condition. However, the converse is not necessarily valid, and \citet{karimi2016linear} provided the counterexample, which is $f(x)=x^{2}+3\sin^{2}x$.

\begin{theorem}\label{th5}
Suppose that the conditions in Theorem \ref{th2} still hold and Assumption  \ref{Asspl} is satisfied, then we have
\begin{equation}
	\begin{aligned}
		\lim_{k\rightarrow\infty} f(\mathbf{x}^k)=f^*\ a.s.,\ 
		\text{and}\ \lim_{k\rightarrow\infty} \mathbb{E}\left[f(\mathbf{x}^k)\right]=f^*.\nonumber
	\end{aligned}
\end{equation}
\end{theorem}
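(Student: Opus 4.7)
The plan is to exploit the PL condition to convert the already-established convergence of gradient norms from Theorem \ref{th2} into convergence of function values. The key identity is the PL inequality rearranged as
\begin{equation*}
0 \;\le\; f(\mathbf{x}^k) - f^{*} \;\le\; \frac{1}{2v}\lVert \nabla f(\mathbf{x}^k)\rVert^{2},
\end{equation*}
which sandwiches $f(\mathbf{x}^k)-f^{*}$ between $0$ and a quantity that we expect to vanish. Thus both statements should follow by squeezing.

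For the almost sure part, I would first invoke Theorem \ref{th2} to get $\lim_{k\to\infty}\lVert \nabla f(\mathbf{x}^k)\rVert = 0$ a.s., hence $\lim_{k\to\infty}\lVert \nabla f(\mathbf{x}^k)\rVert^{2} = 0$ a.s. Plugging this into the PL bound above and using $f(\mathbf{x}^k) \ge f^{*}$ from Assumption \ref{Assmin}, the squeeze theorem applied pathwise yields $\lim_{k\to\infty} f(\mathbf{x}^k) = f^{*}$ almost surely.

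For the convergence in expectation, the difficulty is that Theorem \ref{th2} only provides $\mathbb{E}[\lVert \nabla f(\mathbf{x}^k)\rVert] \to 0$, whereas the PL bound naturally involves the squared norm. I would resolve this by using the uniform gradient bound implicit in Assumption \ref{Assunbia}: since $\mathbb{E}[\mathbf{g}^k\mid\mathcal{F}^k] = \nabla f(\mathbf{x}^k)$ and $\lVert\mathbf{g}^k\rVert \le M$ a.s., Jensen's inequality gives $\lVert \nabla f(\mathbf{x}^k)\rVert \le M$, so $\lVert \nabla f(\mathbf{x}^k)\rVert^{2} \le M\,\lVert \nabla f(\mathbf{x}^k)\rVert$. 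Taking expectations and combining with the PL bound yields
\begin{equation*}
0 \;\le\; \mathbb{E}\bigl[f(\mathbf{x}^k)-f^{*}\bigr] \;\le\; \frac{M}{2v}\,\mathbb{E}\bigl[\lVert \nabla f(\mathbf{x}^k)\rVert\bigr],
\end{equation*}
and the right-hand side tends to zero by Theorem \ref{th2}. (Alternatively, since $f(\mathbf{x}^k)-f^{*}\le M^{2}/(2v)$ uniformly and converges a.s.\ to $0$, the bounded convergence theorem applies directly.) This step is the only place where care is needed, but it is routine rather than a genuine obstacle; the substance of the theorem is really borrowed from Theorem \ref{th2}, with the PL condition serving as a one-line conversion mechanism.
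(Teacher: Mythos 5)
Your proposal is correct and follows essentially the same route as the paper: rearrange the PL inequality to sandwich $f(\mathbf{x}^k)-f^{*}$ between $0$ and $\frac{1}{2v}\lVert\nabla f(\mathbf{x}^k)\rVert^{2}$ and squeeze, using Theorem \ref{th2} for both the almost sure and the expectation statements. In fact you are slightly more careful than the paper on the expectation part: the paper's proof takes limits in $\mathbb{E}[\lVert\nabla f(\mathbf{x}^k)\rVert^{2}]\ge 2v(\mathbb{E}[f(\mathbf{x}^k)]-f^{*})$ without explicitly justifying why $\mathbb{E}[\lVert\nabla f(\mathbf{x}^k)\rVert^{2}]\to 0$ when Theorem \ref{th2} only gives $\mathbb{E}[\lVert\nabla f(\mathbf{x}^k)\rVert]\to 0$, whereas your use of the bound $\lVert\nabla f(\mathbf{x}^k)\rVert\le M$ (or bounded convergence) closes that gap cleanly.
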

\begin{proof}
See Appendix \ref{Proof of th5}.
\end{proof}
	
\par Under the same step size condition as Theorem \ref{th2}, we achieve the convergence of function values for Adam with the last iterate output in Theorem \ref{th5}. Further, by choosing a specific step size $\eta_{k}$, we attain the non-ergodic convergence rate in the following theorem.

\begin{theorem}\label{th3}
Let $(\mathbf{x}^k)_{k\ge 1}$ be the sequence generated by Algorithm \ref{Gadafom}, and Assumptions \ref{Assunbia}, \ref{Assmin} and \ref{Asspl} are satisfied. If we choose $\eta_k=\frac{\sqrt{M^2+\epsilon}}{(1-\beta)v}\cdot\frac{1}{k+1}$, then for any $K\ge 1$, we have
\begin{equation}
	\begin{aligned}
		\mathbb{E}\left[ f(\mathbf{x}^{K})\right] -f^*
		\le
		&\left( \frac{C_4}{(1-\beta)^2}+C_6\right) \cdot\frac{ 1 }{K+1} +\frac{C_5}{(1-\beta)^2}\cdot\frac{\sum_{k=1}^{K}k(1-\theta_k)}{K(K+1)} ,\nonumber
	\end{aligned}
\end{equation}
where $C_4:=\frac{\beta LM^2(M^2+\epsilon)}{(1-\beta)^2 v^2\epsilon}$, $
C_5:=\frac{ M^4\sqrt{d(M^2+\epsilon)}}{(1-\beta)v\epsilon^{\frac{3}{2}}}$ and $
C_6:=\frac{LM^2(M^2+\epsilon)}{2(1-\beta)^2 v^2\epsilon}$ are all finite values.
\end{theorem}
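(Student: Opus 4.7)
\bigskip

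\noindent\textbf{Proof proposal for Theorem \ref{th3}.} The plan is to build a PL-driven one-step recursion of the form
\begin{align*}
a_{k+1} \;\le\; \Big(1-\tfrac{2v(1-\beta)\eta_k}{\sqrt{M^{2}+\epsilon}}\Big)\,a_k \;+\; E_k, \qquad a_k:=\mathbb{E}[f(\mathbf{x}^{k})]-f^{*},
\end{align*}
where the error $E_k$ is of order $\eta_k^{2}+\eta_k(1-\theta_k)$, and then to solve it for the specific choice $\eta_k=\tfrac{\sqrt{M^{2}+\epsilon}}{(1-\beta)v(k+1)}$. With this step size, the contraction factor becomes $1-\tfrac{2}{k+1}=\tfrac{k-1}{k+1}$, which gives the desired $\mathcal{O}(1/K)$ rate once we multiply through by $k(k+1)$ and telescope.

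\smallskip

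\noindent First I would apply $L$-smoothness to the Adam update $\mathbf{x}^{k+1}-\mathbf{x}^{k}=-\eta_k\mathbf{m}^{k}/\sqrt{\mathbf{v}^{k}+\epsilon}$ and split the resulting cross term using $\mathbf{m}^{k}=\beta_k\mathbf{m}^{k-1}+(1-\beta_k)\mathbf{g}^{k}$. The main analytical step, which is essentially the one already carried out in the proof of Theorem \ref{th1}, is to exploit the componentwise bound $\sqrt{\mathbf{v}_i^{k}+\epsilon}\le\sqrt{M^{2}+\epsilon}$ together with the unbiasedness of $\mathbf{g}^{k}$ to extract the leading contribution $\tfrac{(1-\beta)\eta_k}{\sqrt{M^{2}+\epsilon}}\mathbb{E}[\lVert\nabla f(\mathbf{x}^{k})\rVert^{2}]$, while controlling the discrepancy between $1/\sqrt{\mathbf{v}^{k}+\epsilon}$ and $1/\sqrt{M^{2}+\epsilon}$ by an $\eta_k(1-\theta_k)$-type term (this is where the $C_5$ and the $\sqrt{d(M^{2}+\epsilon)}$ factor enter). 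The $\eta_k^{2}$ term, contributing $C_6$, comes from the standard $\tfrac{L\eta_k^{2}}{2}\lVert\mathbf{m}^{k}/\sqrt{\mathbf{v}^{k}+\epsilon}\rVert^{2}\le\tfrac{LM^{2}\eta_k^{2}}{2\epsilon}$ bound, while the $\beta$-factor in $C_4$ traces back to bounding the residual momentum inner product involving $\mathbf{m}^{k-1}$ via Young's inequality. Applying Assumption \ref{Asspl} in the form $\lVert\nabla f(\mathbf{x}^{k})\rVert^{2}\ge 2v\,a_k$ then converts the gradient-norm coefficient into the desired contraction factor.

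\smallskip

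\noindent With the recursion $a_{k+1}\le\tfrac{k-1}{k+1}a_k+E_k$ in hand, I would multiply both sides by $k(k+1)$ and set $c_k:=(k-1)k\,a_k$ so that the contraction telescopes:
\begin{align*}
c_{k+1} \;\le\; c_k \;+\; k(k+1)\,E_k.
\end{align*}
Summing from $k=1$ and using $c_1=0$, the surviving terms are $\sum_k k(k+1)\eta_k^{2}$ and $\sum_k k(k+1)\eta_k(1-\theta_k)$. Because $\eta_k=\Theta(1/(k+1))$, the first sum is of order $K$ and after dividing by $K(K\pm 1)$ yields the $C_6/(K+1)$ contribution, while the second sum collapses precisely to $\sum_{k=1}^{K}k(1-\theta_k)$ up to the constants absorbed in $C_5$, matching the second term in the statement. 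The $C_4$ piece emerges from the residual momentum error multiplied by the same telescoping weight $k(k+1)$, giving a $\beta$-dependent $1/(K+1)$ contribution.

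\smallskip

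\noindent The part I expect to be the main obstacle is the careful bookkeeping of the Adam per-step inequality: the denominator $\sqrt{\mathbf{v}^{k}+\epsilon}$ depends on $\mathbf{g}^{k}$, so the conditional expectation of $\langle\nabla f(\mathbf{x}^{k}),\mathbf{g}^{k}/\sqrt{\mathbf{v}^{k}+\epsilon}\rangle$ is not simply $\lVert\nabla f(\mathbf{x}^{k})\rVert^{2}/\sqrt{\mathbf{v}^{k}+\epsilon}$, and comparing $1/\sqrt{\mathbf{v}^{k}+\epsilon}$ to the deterministic surrogate $1/\sqrt{M^{2}+\epsilon}$ requires the same difference-of-reciprocals argument used in Theorem \ref{th1} to produce the $(1-\theta_k)$ factor. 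Once that per-step bound is established with the correct constants, the rest is a routine recursion, and the specific value of $\eta_k$ in the theorem is chosen precisely so that the PL-induced contraction rate equals $2/(k+1)$, which is exactly what makes the $c_k$ telescoping yield the claimed $\mathcal{O}(1/K)$ non-ergodic rate.
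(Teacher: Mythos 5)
Your proposal follows essentially the same route as the paper: it reuses the per-step descent inequality from the proof of Theorem \ref{th1}, applies the PL condition to turn the gradient term into a contraction factor $1-\tfrac{2}{k+1}$ under the prescribed step size, and then telescopes after weighting by $k(k+1)$ (the paper packages this last step as Lemma \ref{Lemre} with $\gamma_k=2/(k+1)$, which is exactly your $c_k=(k-1)k\,a_k$ substitution), before bounding the weighted momentum and $(1-\theta_j)$ double sums via the geometric-sum estimate \eqref{Lemsum.2}. The sketch is correct and matches the paper's argument in structure and in all essential constants.
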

\begin{proof}
See Appendix \ref{Proof of th3}.
\end{proof}

\par The PL condition is widely used in the convergence analysis of various stochastic optimization algorithms, including SGD  \citep{bassily2018exponential},  the norm-version of AdaGrad \citep{xie2020linear}, RMSProp \citep{sun2020novel}, and SSRGD \citep{li2022simple}. It is worth mentioning that this paper analyzes the performance of Adam under the PL condition for the first time.

\begin{corollary}\label{Nergco}
Suppose that the conditions in Theorem \ref{th3} still hold, then for $\theta_k=1-1/k^{p}$, we have
\begin{equation}
	\begin{aligned}
    	\mathbb{E}\left[ f(\mathbf{x}^{K+1})\right] -f^*
		\le
	\begin{cases}
	O({ 1 }/{K}) +O({1}/{K^p}), 
	& \text{$0<p< 2$}\\
	O({ 1 }/{K}) +O({\ln K}/{K^2}),
	& \text{$p=2$}\\
	O({ 1 }/{K}) +O({1}/{K^2}), 
	& \text{$2<p$}\\
	\end{cases},\nonumber
	\end{aligned}
\end{equation}
where $S_p$ is the constant such that $\sum_{k=1}^{K}(1/k^{p-1})=S_p<\infty$ for any $p>2$.
\end{corollary}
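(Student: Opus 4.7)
The plan is to start from the bound given in Theorem \ref{th3}, namely
\begin{equation*}
\mathbb{E}[f(\mathbf{x}^{K})]-f^{*}\le \Bigl(\tfrac{C_{4}}{(1-\beta)^{2}}+C_{6}\Bigr)\cdot\tfrac{1}{K+1}+\tfrac{C_{5}}{(1-\beta)^{2}}\cdot\tfrac{\sum_{k=1}^{K}k(1-\theta_{k})}{K(K+1)},
\end{equation*}
and observe that the first term contributes $O(1/K)$ regardless of $p$. Thus the corollary reduces to a purely computational estimate of the second term after substituting $1-\theta_{k}=1/k^{p}$, which turns $\sum_{k=1}^{K}k(1-\theta_{k})$ into the $p$-series $\sum_{k=1}^{K}k^{1-p}$.

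Next I would run a three-case analysis on the exponent $p$ via the standard integral test. For $0<p<2$, the exponent $1-p>-1$ makes the series diverge, and the integral test (treating $p=1$ as an easy sub-case with sum equal to $K$) gives $\sum_{k=1}^{K}k^{1-p}=\Theta\bigl(K^{2-p}/(2-p)\bigr)$; dividing by $K(K+1)$ yields $O(1/K^{p})$. For $p=2$ the series becomes the harmonic sum $\sum_{k=1}^{K}1/k=\Theta(\ln K)$, producing $O(\ln K/K^{2})$ after division. For $p>2$ the tail sum is uniformly bounded by the convergent constant $S_{p}$ defined in the statement, so the contribution is $O(1/K^{2})$.

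Finally, I would combine the $O(1/K)$ term from the first summand with the $p$-dependent term from the second to obtain the three piecewise estimates asserted in the corollary. No single step is a genuine obstacle — the argument is a routine application of $p$-series asymptotics applied to the ready-made bound of Theorem \ref{th3}; the only care needed is to handle the borderline exponents $p=1$ (inside the first case) and $p=2$ (the logarithmic transition) cleanly so that the constants hidden in the $\Theta$-notation remain finite and can be absorbed into the big-$O$ symbols.
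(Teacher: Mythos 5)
Your proposal is correct and is exactly the routine computation the paper implicitly relies on (the corollary is stated without an explicit proof in the appendix): substitute $1-\theta_k=1/k^{p}$ into the bound of Theorem \ref{th3}, estimate $\sum_{k=1}^{K}k^{1-p}$ by the integral test in the three regimes $p<2$, $p=2$, $p>2$, and divide by $K(K+1)$. Your handling of the borderline cases $p=1$ and $p=2$ is the only point requiring care, and you address it correctly.
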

\par By choosing $\theta_{k}=1-1/k^{p}$, Corollary \ref{Nergco} achieves an $O(1/K)$ non-ergodic convergence rate of function values for Adam. Meanwhile, the convergence result improves as $p$ increases, which is consistent with previous work and the choice of hyperparameters in many practical applications. Specifically, \citet{zhang2022adam} declared that Adam can converge over a wide range of hyperparameter settings with increasing second-order momentum. Additionally, \citet{defossezsimple} showed that Adam exhibits convergence when $\theta_k\rightarrow 1$, while the counter-example  \citep[see][]{reddi2019convergence} employs $\theta_k=\theta<1/5$.

\section{Conclusion}\label{sec:conclusion}
In this paper, we have established a novel and comprehensive theoretical analysis for the Adam algorithm in the non-convex setting. Specifically, we have provided precise definitions of ergodic and non-ergodic convergence and analyzed the relationship between them, showing that non-ergodic convergence is much stronger and more realistic. Based on this, the convergence of Adam has been established in two cases. For the ergodic case, our analysis has obtained the expected and almost sure convergence rates of gradient sequences. For the non-ergodic case, we have achieved the convergence of the gradient norm and the convergence rate of function values. Additionally, this paper has proposed a more relaxed sufficient condition to guarantee convergence, which broadens the hyperparameter selection of Adam. In conclusion, this paper provides valuable insight into the convergence of Adam for non-convex problems and enhances the theoretical foundation for practical applications.

\section*{Acknowledgement}
This work was funded in part by the National Natural Science Foundation of China (Nos. 62176051, 62272096), in part by National Key R\&D Program of China (No. 2021YFA1003400), and in part by the Fundamental Research Funds for the Central Universities of China. (No. 2412020FZ024).

\newpage

\appendix
\section{Technical Lemmas}
\begin{lemma}\label{Lemsum}
Let $(b_k)_{k\ge 1}$ be a non-negative sequence of real values and  $0\le\beta<1$, then we have
\begin{align}
   	\label{Lemsum.1}
   	&\sum_{k=1}^{K}\sum_{j=1}^{k}\beta^{k-j}b_j\le\frac{1}{1-\beta}\sum_{k=1}^{K}b_k,\\
   	\label{Lemsum.2}
   	&\sum_{k=1}^{K}k\sum_{j=1}^{k}\beta^{k-j}b_j\le\frac{1}{(1-\beta)^2}\sum_{k=1}^{K}k b_k.
\end{align}
\end{lemma}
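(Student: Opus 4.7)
The plan is to prove both inequalities by swapping the order of summation (a Fubini-type reindexing over the triangular index set $\{(k,j): 1\le j\le k\le K\}$) and then bounding the resulting inner geometric sums via the standard estimates $\sum_{m\ge 0}\beta^m \le 1/(1-\beta)$ and $\sum_{m\ge 0} m\beta^m \le \beta/(1-\beta)^2$. Non-negativity of $b_k$ is what allows us to freely apply upper bounds termwise inside the sums.

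For \eqref{Lemsum.1}, I would interchange the two sums to rewrite the left-hand side as $\sum_{j=1}^{K} b_j \sum_{k=j}^{K} \beta^{k-j}$. Substituting $m = k-j$ in the inner sum gives $\sum_{m=0}^{K-j} \beta^m \le 1/(1-\beta)$, and the result follows immediately.

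For \eqref{Lemsum.2}, the same swap yields $\sum_{j=1}^{K} b_j \sum_{k=j}^{K} k\,\beta^{k-j}$, so the task reduces to proving the pointwise bound $\sum_{k=j}^{K} k\,\beta^{k-j} \le j/(1-\beta)^2$ for every $j\ge 1$. The natural decomposition is $k = j + (k-j)$, which splits the inner sum into $j\sum_{m=0}^{K-j}\beta^m + \sum_{m=0}^{K-j} m\,\beta^m$ after reindexing with $m=k-j$; these are bounded by $j/(1-\beta)$ and $\beta/(1-\beta)^2$ respectively. The main obstacle, modest but the only nontrivial step, is to absorb the additive $\beta/(1-\beta)^2$ term into $j/(1-\beta)^2$; this works because $j(1-\beta) + \beta = j + \beta(1-j) \le j$ for $j\ge 1$ (since $\beta\ge 0$ and $1-j\le 0$). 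Multiplying through by $b_j$ and summing in $j$ then delivers the claimed bound.

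If one prefers to avoid the splitting trick, an alternative is a direct induction on $K$, or a generating-function argument based on differentiating $\sum_{m\ge 0}\beta^m = 1/(1-\beta)$; both routes are routine, and I would present the swap-and-split argument for brevity.
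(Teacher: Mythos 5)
Your proof of \eqref{Lemsum.1} is identical to the paper's: swap the order of summation and bound the inner sum by the full geometric series. For \eqref{Lemsum.2} the overall skeleton is also the same (swap sums, then show the inner weighted sum is at most $j/(1-\beta)^2$), but you bound that inner sum differently. The paper keeps the absolute index, writes the left side as $\sum_{j=1}^{K}\beta^{-j}b_j\,S_j^K$ with $S_j^K:=\sum_{k=j}^{K}k\beta^{k}$, and evaluates $S_j^K$ via the Abel-type trick of computing $(1-\beta)S_j^K$ and telescoping; you instead work with $\sum_{k=j}^{K}k\beta^{k-j}$ directly and split $k=j+(k-j)$, reducing everything to the closed forms $\sum_{m\ge 0}\beta^{m}=1/(1-\beta)$ and $\sum_{m\ge 0}m\beta^{m}=\beta/(1-\beta)^{2}$. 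Both routes hit the same final absorption step, which requires $j\ge 1$: the paper uses it as $\beta^{j+1}/(1-\beta)^{2}\le j\beta^{j+1}/(1-\beta)^{2}$, and you use it as $j(1-\beta)+\beta\le j$. Your version is marginally more economical since it avoids writing out the subtraction of the two shifted series, while the paper's is self-contained in that it does not invoke the value of $\sum m\beta^{m}$ as a known fact. Both arguments are correct and complete.
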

\begin{proof}
First, by exchanging the summation order, we then obtain
\begin{equation}
   	\begin{aligned}
   	&\sum_{k=1}^{K}\sum_{j=1}^{k}\beta^{k-j}b_j
   	=\sum_{j=1}^{K}\sum_{k=j}^{K}\beta^{k-j}b_j
   	=\sum_{j=1}^{K}\sum_{k=0}^{K-j}\beta^{k}b_j\\
   	&\le\sum_{j=1}^{K}\sum_{k=0}^{\infty}\beta^{k}b_j
   	\le\frac{1}{1-\beta}\sum_{j=1}^{K}b_j
   	=\frac{1}{1-\beta}\sum_{k=1}^{K}b_k.\nonumber
   	\end{aligned}
\end{equation}
This indicates that the conclusion in \eqref{Lemsum.1} holds.
\par Next, exchanging the order of summation for the left-hand side of \eqref{Lemsum.2} yields
\begin{equation}\label{equation}
   	\begin{aligned}
   	\sum_{k=1}^{K}k\sum_{j=1}^{k}\beta^{k-j}b_j
   	=\sum_{k=1}^{K}\sum_{j=1}^{k}k\beta^{k-j}b_j
   	=\sum_{j=1}^{K}\sum_{k=j}^{K}k\beta^{k-j}b_j
   	=\sum_{j=1}^{K}\beta^{-j}b_j\sum_{k=j}^{K}k\beta^{k}.
   	\end{aligned}
\end{equation}
Upon let $S_j^K:=\sum_{k=j}^{K}k\beta^{k}$, we have
\begin{align}
   	\label{Lemsum.3}&S_j^K=j \beta^j+(j+1) \beta^{j+1}+\cdots+K \beta^K,\\
   	\label{Lemsum.4}\beta &S_j^K=\ \ \ \ \ \ \ \ \ \ \ \ j \beta^{j+1}+\cdots+(K-1)\beta^K+K\beta^{K+1}.
\end{align}
Subtracting \eqref{Lemsum.4} from \eqref{Lemsum.3} gives
\begin{equation}\label{Lemsum.5}
   	\begin{aligned}
   	(1-\beta)S_j^K
   	&=j \beta^j+\beta^{j+1}+\cdots+ \beta^K-K\beta^{K+1}\\
   	&\overset{(a)}{\le}j \beta^j+\left( \beta^{j+1}+\cdots+ \beta^K\right) 
   	\\
   	&\overset{(b)}{\le}j \beta^j+\frac{\beta^{j+1}}{1-\beta},
   	\end{aligned}
\end{equation}
where $(a)$ is because $-K\beta^{K+1}<0$ and $(b)$ is due to $\beta<1$, that is, $\beta^{j+1}+\cdots+ \beta^K=\sum_{i=j+1}^{K}\beta^i\le\sum_{i=j+1}^{\infty}\beta^i=\frac{\beta^{j+1}}{1-\beta}$. Upon dividing both sides of \eqref{Lemsum.5} by $1-\beta$ gives
\begin{equation}
   	\begin{aligned}
   	S_j^K
   	=\sum_{k=j}^{K}k\beta^{k}
   	\le\frac{j \beta^j}{1-\beta}+\frac{\beta^{j+1}}{(1-\beta)^2}
   	\overset{(c)}{\le}\frac{j \beta^j}{1-\beta}+\frac{j\beta^{j+1}}{(1-\beta)^2}
   	=\frac{j\beta^{j}}{(1-\beta)^2},\nonumber
   	\end{aligned}
\end{equation}
where $(c)$ is because $j\ge 1$. Substituting the above equation back into \eqref{equation} yields
\begin{equation}
   	\begin{aligned}
   	\sum_{k=1}^{K}k\sum_{j=1}^{k}\beta^{k-j}b_j
   	=&\sum_{j=1}^{K}\beta^{-j}b_j\sum_{k=j}^{K}k\beta^{k}
   	\le\sum_{j=1}^{K}\beta^{-j}b_j\frac{j\beta^{j}}{(1-\beta)^2}\\
   	=&\frac{1}{(1-\beta)^2}\sum_{j=1}^{K}j b_j
   	=\frac{1}{(1-\beta)^2}\sum_{k=1}^{K}k b_k.\nonumber
   	\end{aligned}
\end{equation}
This completes the proof.
\end{proof}
\begin{lemma}\label{Lemmbound}
Let $(\mathbf{m}^k)_{k\ge 0}$ and $({\mathbf{v}}^k)_{k\ge 0}$ be the sequences generated  by the Algorithm \ref{Gadafom}, and assume that Assumption \ref{Assunbia} holds, then for $\forall\, k\ge 0$ we have
\begin{align}\label{Lemmbound.1}
	\lVert\mathbf{m}^k \rVert
	\le M\ a.s.,\quad 
	\lVert\mathbf{v}^k \rVert
	 \le M^2\ a.s.,\quad 
	\left\|  \frac{\mathbf{m}^k}{\sqrt{\mathbf{v}^k+\epsilon}} \right\|  \le \frac{M}{\sqrt{\epsilon}}\ a.s.
\end{align}
\end{lemma}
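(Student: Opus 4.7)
The plan is to prove the three bounds in the order stated, using induction for the first two and then combining them for the third. Throughout, I exploit that Algorithm \ref{Gadafom} defines both $\mathbf{m}^{k}$ and $\mathbf{v}^{k}$ as convex combinations (driven by $\beta_{k},\theta_{k}\in[0,1]$) of previous iterates and gradient terms, and that the components of $\mathbf{v}^{k}$ are non-negative since $(\mathbf{g}^{k})^{2}$ is non-negative entrywise.

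For the first inequality, I would proceed by induction on $k$. The base case $k=0$ is immediate from $\mathbf{m}^{0}=\mathbf{0}$. For the inductive step, the update rule combined with the triangle inequality gives
\begin{equation*}
\lVert \mathbf{m}^{k}\rVert \le \beta_{k}\lVert \mathbf{m}^{k-1}\rVert + (1-\beta_{k})\lVert \mathbf{g}^{k}\rVert \le \beta_{k} M + (1-\beta_{k})M = M\quad \text{a.s.},
\end{equation*}
where the almost-sure bound $\lVert\mathbf{g}^{k}\rVert\le M$ comes from Assumption \ref{Assunbia}.

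For the second inequality, the main obstacle is that $\mathbf{v}^{k}$ is built from squared gradient entries, so a straight induction in $l_2$-norm fails because $\lVert(\mathbf{g}^{k})^{2}\rVert\neq \lVert \mathbf{g}^{k}\rVert^{2}$ in general. I would instead unroll the recursion to obtain $\mathbf{v}^{k}=\sum_{j=1}^{k}w_{j,k}(\mathbf{g}^{j})^{2}$ with weights $w_{j,k}:=(1-\theta_{j})\prod_{l=j+1}^{k}\theta_{l}$ satisfying $w_{j,k}\ge 0$ and $\sum_{j=1}^{k}w_{j,k}=1-\prod_{l=1}^{k}\theta_{l}\le 1$. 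The triangle inequality then yields $\lVert\mathbf{v}^{k}\rVert\le \sum_{j=1}^{k}w_{j,k}\lVert(\mathbf{g}^{j})^{2}\rVert$, and the key observation is that for any vector $\mathbf{a}\in\mathbb{R}^{d}$ with non-negative entries one has $\lVert \mathbf{a}^{2}\rVert =\sqrt{\sum_{i}\mathbf{a}_{i}^{4}}\le \sum_{i}\mathbf{a}_{i}^{2}=\lVert \mathbf{a}\rVert^{2}$. Applying this to $\mathbf{a}=\mathbf{g}^{j}$ gives $\lVert(\mathbf{g}^{j})^{2}\rVert\le \lVert \mathbf{g}^{j}\rVert^{2}\le M^{2}$ a.s., and summing with the weights produces $\lVert \mathbf{v}^{k}\rVert\le M^{2}$ a.s. (An equivalent inductive route is to first show $\lVert\mathbf{v}^{k}\rVert_{1}=\sum_{i}\mathbf{v}_{i}^{k}\le M^{2}$ by induction, then use $\lVert \mathbf{v}^{k}\rVert\le\lVert\mathbf{v}^{k}\rVert_{1}$ which holds because $\mathbf{v}^{k}$ is entrywise non-negative.)

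For the third inequality I would use the pointwise lower bound $\mathbf{v}_{i}^{k}+\epsilon\ge \epsilon$ (valid because $\mathbf{v}_{i}^{k}\ge 0$) to get
\begin{equation*}
\left\lVert \frac{\mathbf{m}^{k}}{\sqrt{\mathbf{v}^{k}+\epsilon}}\right\rVert^{2}
=\sum_{i=1}^{d}\frac{(\mathbf{m}_{i}^{k})^{2}}{\mathbf{v}_{i}^{k}+\epsilon}
\le \frac{1}{\epsilon}\sum_{i=1}^{d}(\mathbf{m}_{i}^{k})^{2}
=\frac{\lVert \mathbf{m}^{k}\rVert^{2}}{\epsilon}\le \frac{M^{2}}{\epsilon}\quad \text{a.s.,}
\end{equation*}
where the last step uses the first bound just established, and taking square roots closes the argument. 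I expect the bound on $\lVert\mathbf{v}^{k}\rVert$ to be the only place that requires non-trivial care, specifically the passage from a componentwise convex-combination bound to the $l_{2}$-norm bound through the non-negativity trick $\lVert \mathbf{a}^{2}\rVert\le \lVert \mathbf{a}\rVert^{2}$; the other two bounds are direct.
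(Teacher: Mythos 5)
Your proof is correct, and parts (1) and (3) coincide with the paper's argument (induction with the triangle inequality for $\mathbf{m}^{k}$, and the componentwise bound $\mathbf{v}^{k}_{i}+\epsilon\ge\epsilon$ for the ratio). For part (2) you take a slightly different route — unrolling the recursion into a sub-convex combination $\mathbf{v}^{k}=\sum_{j=1}^{k}w_{j,k}(\mathbf{g}^{j})^{2}$ — whereas the paper simply runs the same induction as in part (1), using at the inductive step exactly the inequality you identify, $\lVert \mathbf{a}^{2}\rVert\le\lVert\mathbf{a}\rVert^{2}$, to pass from $\lVert(\mathbf{g}^{k})^{2}\rVert$ to $\lVert\mathbf{g}^{k}\rVert^{2}\le M^{2}$. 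Your stated reason for abandoning the direct induction ("a straight induction in $l_{2}$-norm fails because $\lVert(\mathbf{g}^{k})^{2}\rVert\neq\lVert\mathbf{g}^{k}\rVert^{2}$") is therefore a false obstacle: equality is not needed, only the one-sided bound, and that bound makes the induction close immediately. Note also that $\lVert\mathbf{a}^{2}\rVert=\bigl(\sum_{i}\mathbf{a}_{i}^{4}\bigr)^{1/2}\le\sum_{i}\mathbf{a}_{i}^{2}=\lVert\mathbf{a}\rVert^{2}$ holds for \emph{any} real vector, not just entrywise non-negative ones, so that hypothesis in your "key observation" is superfluous. These are cosmetic points; the unrolled-sum argument and the parenthetical $l_{1}$-norm variant you sketch are both valid and yield the same constant.
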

\begin{proof}
$(1)$ For the boundness of $\lVert\mathbf{m}^{k}\rVert$, we do mathematical induction on $k$. It follows from the initial value $\mathbf{m}^{0}=0$ that $\lVert\mathbf{m}^0 \rVert=0\le M$, then we assume that $\lVert\mathbf{m}^{k-1} \rVert\le M\ a.s.$ holds, thus
\begin{equation}
	\begin{aligned}
	\lVert \mathbf{m}^{k} \rVert &=\lVert \beta_k \mathbf{m}^{k-1} 
	+ (1-\beta_k)\mathbf{g}^k\rVert\\
	&\le\beta_k
	\lVert\mathbf{m}^{k-1}\rVert
	+(1-\beta_k)\lVert\mathbf{g}^k\rVert\\
	&\le \beta_k M+(1-\beta_k)M=M\ a.s.,\nonumber
	\end{aligned}
\end{equation}
which implies that $\parallel \mathbf{m}^k \parallel\le M\ a.s.$ holds for any $k\ge0$.
	
\par $(2)$ For the boundness of $\lVert \mathbf{v}^{k}\rVert$, we also do mathematical induction on $k$. Since $\lVert\mathbf{v}^0\rVert=0\le M^2$, we then assume that $\lVert \mathbf{v}^{k-1}\rVert\le M^2\ a.s.$, so
\begin{equation}
	\begin{aligned}
	\lVert \mathbf{v}^{k} \rVert
	&\,=\,\lVert  \theta_k\mathbf{v}^{k-1}+(1-\theta_k)(\mathbf{g}^k)^2\rVert
	\\
	&\le \theta_k\lVert \mathbf{v}^{k-1}\rVert
	+(1-\theta_k)\lVert (\mathbf{g}^k)^2 \rVert\\
	&\overset{(a)}{\le}\theta_k\lVert \mathbf{v}^{k-1}\rVert
	+(1-\theta_k)\lVert \mathbf{g}^k \rVert^2
	\\
	&\le\theta_kM^2+(1-\theta_k)M^2=M^2
	\ a.s.,\nonumber
	\end{aligned}
\end{equation}
where $(a)$ is because $\lVert \mathbf{x} ^2\rVert\le\left\|\mathbf{x}\right\|^2$ for $\forall\, \mathbf{x}\in\mathbb{R}^d$. Thus, for any $k\ge0$, we have $\lVert \mathbf{v}^k \rVert\le M^2\ a.s.$
	
\par $(3)$ By the definition of ${\ell}_2$ norm,  we have
\begin{equation}
	\begin{aligned}
	\left\| \frac{\mathbf{m}^k}{\sqrt{\mathbf{v}^k+\epsilon}} \right\|  &=\sqrt{\frac{(\mathbf{m}^k_1)^2}{\mathbf{v}^k_1+\epsilon}+\frac{(\mathbf{m}^k_2)^2}{\mathbf{v}^k_2+\epsilon}+\cdots+\frac{(\mathbf{m}^k_d)^2}{\mathbf{v}^k_d+\epsilon}}\\ &\overset{(b)}{\le}\sqrt{\frac{(\mathbf{m}^k_1)^2}{\epsilon}+\frac{(\mathbf{m}^k_2)^2}{\epsilon}+\cdots+\frac{(\mathbf{m}^k_d)^2}{\epsilon}}
	\\
	&=\frac{\parallel \mathbf{m}^k \parallel}{\sqrt{\epsilon}}\le\frac{M}{\sqrt{\epsilon}}\ a.s.,\nonumber
	\end{aligned}
\end{equation}
where $(b)$ is because  each component of $\mathbf{v}^k$ is non-negative (as shown in the fourth step of Algorithm \ref{Gadafom}). We have thus proved the lemma.
\end{proof}
\section{Proofs for Section \ref{ERGODIC}}
\subsection{Lemmas for Theorem \ref{th1}}
\begin{lemma}\label{LemGlow}
Let $(\mathbf{x}^k)_{k\ge 1}$ and $({\mathbf{v}}^k)_{k\ge 1}$ be the sequences generated by the Algorithm \ref{Gadafom}, and assume that Assumption \ref{Assunbia} holds, then for $\forall k\ge 1$ we have
\begin{align}\label{LemGlow.1}
   	\left\|  \frac{(\nabla f(\mathbf{x}^k))^2}{\sqrt{\mathbf{v}^{k-1}+\epsilon}} \right\|_1  \ge \frac{\left\| \nabla f(\mathbf{x}^k)\right\|^2}{\sqrt{M^2+\epsilon}}\ a.s.
   	\end{align}
\end{lemma}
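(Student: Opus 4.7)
The plan is to prove this by a direct component-wise argument. The key ingredient I will use is that every coordinate of $\mathbf{v}^{k-1}$ is bounded above by $M^2$ almost surely. This follows from Lemma \ref{Lemmbound}, which already establishes $\lVert \mathbf{v}^{k-1}\rVert \le M^2$ a.s., combined with the observation (noted right after Algorithm \ref{Gadafom}) that each component of $\mathbf{v}^{k-1}$ is non-negative. More concretely, for any coordinate $j$, non-negativity gives $(\mathbf{v}^{k-1}_j)^2 \le \sum_i(\mathbf{v}^{k-1}_i)^2 = \lVert\mathbf{v}^{k-1}\rVert^2 \le M^4$, so $0\le \mathbf{v}^{k-1}_j\le M^2$ a.s.

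With this in hand, the bound $\sqrt{\mathbf{v}^{k-1}_j+\epsilon}\le \sqrt{M^2+\epsilon}$ holds a.s.\ for every coordinate $j$. Since each term $(\nabla_j f(\mathbf{x}^k))^2$ is non-negative, dividing by a larger positive quantity can only decrease the value, so coordinate-wise
\begin{align*}
\frac{(\nabla_j f(\mathbf{x}^k))^2}{\sqrt{\mathbf{v}^{k-1}_j+\epsilon}}\ge \frac{(\nabla_j f(\mathbf{x}^k))^2}{\sqrt{M^2+\epsilon}}\ a.s.
\end{align*}

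Finally I would sum over $j=1,\ldots,d$. Because the summands are non-negative, the left-hand side of the desired inequality is precisely the $\ell_1$ norm of that coordinate-wise ratio, while the right-hand side telescopes into $\lVert \nabla f(\mathbf{x}^k)\rVert^2/\sqrt{M^2+\epsilon}$, which yields \eqref{LemGlow.1}. There is no significant obstacle here; the only subtlety is making the conversion from the $\ell_2$-bound $\lVert\mathbf{v}^{k-1}\rVert\le M^2$ to the coordinate-wise bound $\mathbf{v}^{k-1}_j\le M^2$ explicit, which is immediate from the non-negativity of the components.
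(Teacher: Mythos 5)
Your proposal is correct and follows essentially the same route as the paper's proof: bound each coordinate $\mathbf{v}^{k-1}_j$ by $\lVert\mathbf{v}^{k-1}\rVert\le M^2$ (using Lemma \ref{Lemmbound} and non-negativity of the components), deduce $\sqrt{\mathbf{v}^{k-1}_j+\epsilon}\le\sqrt{M^2+\epsilon}$, and sum the resulting coordinate-wise inequalities via the definition of the $\ell_1$ norm. No issues.
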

   \begin{proof}
   	By the definition of $\ell_1$ norm, we have
   	\begin{equation}
   	\begin{aligned}
   	\left\|  \frac{(\nabla f(\mathbf{x}^k))^2}{\sqrt{\mathbf{v}^{k-1}+\epsilon}} \right\|_1
   	&=\frac{(\nabla_{1}  f(\mathbf{x}^k))^2}{\sqrt{\mathbf{v}^{k-1}_1+\epsilon}}
   	+\cdots+\frac{(\nabla_{d}  f(\mathbf{x}^k))^2}{\sqrt{\mathbf{v}^{k-1}_d+\epsilon}}
   	\ge\frac{(\nabla_{1}  f(\mathbf{x}^k))^2}{\sqrt{\left\| \mathbf{v}^{k-1}\right\| +\epsilon}}
   	+\cdots+\frac{(\nabla_{d}  f(\mathbf{x}^k))^2}{\sqrt{\left\| \mathbf{v}^{k-1}\right\| +\epsilon}}\\
   	&\overset{(a)}{\ge}\frac{(\nabla_{1}  f(\mathbf{x}^k))^2}{\sqrt{M^2+\epsilon}}
   	+\cdots+\frac{(\nabla_{d}  f(\mathbf{x}^k))^2}{\sqrt{M^2+\epsilon}}=\frac{\left\| \nabla f(\mathbf{x}^k)\right\|^2}{\sqrt{M^2+\epsilon}}\ a.s.,\nonumber
   	\end{aligned}
\end{equation}
where $(a)$ is due to the conclusion of Lemma \ref{Lemmbound},  that is,  $\lVert\mathbf{v}^k \rVert \le M^2\ a.s.$ This completes the proof.
\end{proof}
\begin{lemma}\label{yogiLem}
Let $({\mathbf{v}}^k)_{k\ge 1}$ be the sequence generated by the Algorithm \ref{Gadafom}, and assume that Assumption \ref{Assunbia} holds, then for $\forall k\ge 1$ we have
\begin{equation}\label{yogiLem.1}
   	\begin{aligned}
   	\left\| \frac{\mathbf{m}^k}{\sqrt{\mathbf{v}^{k-1}+\epsilon}}-\frac{\mathbf{m}^k}{\sqrt{\mathbf{v}^{k}+\epsilon}}\right\|
   	\le \frac{\sqrt{d}M^3}{\epsilon^{\frac{3}{2}}}(1-\theta_k)\ a.s.
   	\end{aligned}
\end{equation}
   	
\end{lemma}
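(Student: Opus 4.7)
The plan is to work componentwise and exploit the fact that $\mathbf{v}^{k}$ and $\mathbf{v}^{k-1}$ differ only by a factor proportional to $(1-\theta_{k})$, together with the uniform bounds on $\mathbf{m}^{k}$, $\mathbf{v}^{k}$, and $\mathbf{g}^{k}$ established in Lemma~\ref{Lemmbound}.

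First, I would factor the $i$-th component of the vector inside the norm as
\begin{equation*}
\mathbf{m}^{k}_{i}\left(\frac{1}{\sqrt{\mathbf{v}^{k-1}_{i}+\epsilon}}-\frac{1}{\sqrt{\mathbf{v}^{k}_{i}+\epsilon}}\right)
=\mathbf{m}^{k}_{i}\cdot \frac{\mathbf{v}^{k}_{i}-\mathbf{v}^{k-1}_{i}}{\sqrt{(\mathbf{v}^{k-1}_{i}+\epsilon)(\mathbf{v}^{k}_{i}+\epsilon)}\,\bigl(\sqrt{\mathbf{v}^{k-1}_{i}+\epsilon}+\sqrt{\mathbf{v}^{k}_{i}+\epsilon}\bigr)},
\end{equation*}
using the identity $\tfrac{1}{\sqrt{a}}-\tfrac{1}{\sqrt{b}}=\tfrac{b-a}{\sqrt{ab}\,(\sqrt{a}+\sqrt{b})}$. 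The update rule in Algorithm~\ref{Gadafom} gives $\mathbf{v}^{k}_{i}-\mathbf{v}^{k-1}_{i}=(1-\theta_{k})\bigl((\mathbf{g}^{k}_{i})^{2}-\mathbf{v}^{k-1}_{i}\bigr)$, which is exactly where the factor $(1-\theta_{k})$ emerges.

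Next I would bound the numerator and denominator separately. By Lemma~\ref{Lemmbound} we have $|\mathbf{m}^{k}_{i}|\le M$ and $|\mathbf{v}^{k-1}_{i}|\le M^{2}$ almost surely, and by Assumption~\ref{Assunbia} we have $(\mathbf{g}^{k}_{i})^{2}\le M^{2}$, so $|(\mathbf{g}^{k}_{i})^{2}-\mathbf{v}^{k-1}_{i}|\le M^{2}$ by the triangle inequality. For the denominator, the non-negativity of the components of $\mathbf{v}^{k}$ yields $\sqrt{\mathbf{v}^{k-1}_{i}+\epsilon}\ge \sqrt{\epsilon}$ and $\sqrt{\mathbf{v}^{k}_{i}+\epsilon}\ge \sqrt{\epsilon}$, so the product in the denominator is at least $\epsilon\cdot 2\sqrt{\epsilon}=2\epsilon^{3/2}$. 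Combining these estimates componentwise gives
\begin{equation*}
\left|\mathbf{m}^{k}_{i}\right|\left|\frac{1}{\sqrt{\mathbf{v}^{k-1}_{i}+\epsilon}}-\frac{1}{\sqrt{\mathbf{v}^{k}_{i}+\epsilon}}\right|\le \frac{M\cdot M^{2}(1-\theta_{k})}{2\epsilon^{3/2}}=\frac{M^{3}(1-\theta_{k})}{2\epsilon^{3/2}}\quad a.s.
\end{equation*}

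Finally, I would pass from the componentwise bound to the $\ell_{2}$ norm by summing the $d$ squared components and taking the square root, which produces the factor $\sqrt{d}$ and yields an upper bound of $\tfrac{\sqrt{d}\,M^{3}(1-\theta_{k})}{2\epsilon^{3/2}}$, which is in turn bounded by the claimed $\tfrac{\sqrt{d}\,M^{3}(1-\theta_{k})}{\epsilon^{3/2}}$. There is no serious obstacle here; the only delicate step is the algebraic manipulation in the first display and the observation that the triangle inequality suffices to control $(\mathbf{g}^{k}_{i})^{2}-\mathbf{v}^{k-1}_{i}$ by $M^{2}$ (rather than a potentially weaker bound), so that the extra factor of $M$ combines with $|\mathbf{m}^{k}_{i}|\le M$ to produce the $M^{3}$ that appears on the right-hand side.
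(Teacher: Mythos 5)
Your proposal is correct and follows essentially the same route as the paper: the same rationalization identity for $\tfrac{1}{\sqrt{a}}-\tfrac{1}{\sqrt{b}}$, the same use of the update rule to extract the factor $(1-\theta_k)$, the same lower bound $2\epsilon^{3/2}$ on the denominator, and the same bounds from Lemma~\ref{Lemmbound}; the only cosmetic difference is that you obtain the $\sqrt{d}$ from $\lVert\cdot\rVert\le\sqrt{d}\,\max_i|\cdot_i|$ while the paper passes through $\lVert\cdot\rVert\le\lVert\cdot\rVert_1$ and bounds the resulting sum. One minor imprecision: the triangle inequality alone gives $|(\mathbf{g}^k_i)^2-\mathbf{v}^{k-1}_i|\le 2M^2$ rather than $M^2$ (the sharper bound follows from both terms lying in $[0,M^2]$), but either constant suffices to reach the stated right-hand side.
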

   
\begin{proof}
Since $\left\| \mathbf{x}\right\|\le\left\| \mathbf{x}\right\|_1 $ holds for $\forall\,\mathbf{x}\in\mathbb{R}^d$, then we can get
\begin{equation}\label{yogi.1}
   	\begin{aligned}
   	&\left\| \frac{\mathbf{m}^k}{\sqrt{\mathbf{v}^{k-1}+\epsilon}}-\frac{\mathbf{m}^k}{\sqrt{\mathbf{v}^{k}+\epsilon}}\right\|
   	\le\left\| \frac{\mathbf{m}^k}{\sqrt{\mathbf{v}^{k-1}+\epsilon}}-\frac{\mathbf{m}^k}{\sqrt{\mathbf{v}^{k}+\epsilon}}\right\|_1\\
   	&=\sum_{i=1}^{d}\left| \frac{\mathbf{m}^k_i}{\sqrt{\mathbf{v}^{k-1}_i+\epsilon}}-\frac{\mathbf{m}^k_i}{\sqrt{\mathbf{v}^{k}_i+\epsilon}}\right| 
   	=\sum_{i=1}^{d}\left| \mathbf{m}^k_i \right| \left| \frac{1}{\sqrt{\mathbf{v}^{k-1}_i+\epsilon}}-\frac{1}{\sqrt{\mathbf{v}^{k}_i+\epsilon}}\right| \\
   	&\overset{(a)}{\le}
   	M\sum_{i=1}^{d}\left| \frac{1}{\sqrt{\mathbf{v}^{k-1}_i+\epsilon}}-\frac{1}{\sqrt{\mathbf{v}^{k}_i+\epsilon}}\right|
   	=M\sum_{i=1}^{d}\left|  \frac{\sqrt{\mathbf{v}^{k}_i+\epsilon}-\sqrt{\mathbf{v}^{k-1}_i+\epsilon}}{\sqrt{\mathbf{v}^{k-1}_i+\epsilon}\sqrt{\mathbf{v}^{k}_i+\epsilon}}\right| \\
   	&=M\sum_{i=1}^{d}\left|  \frac{\left( \sqrt{\mathbf{v}^{k}_i+\epsilon}-\sqrt{\mathbf{v}^{k-1}_i+\epsilon}\right) 
   	\left(  \sqrt{\mathbf{v}^{k}_i+\epsilon}+\sqrt{\mathbf{v}^{k-1}_i+\epsilon}\right) }{\sqrt{\mathbf{v}^{k-1}_i+\epsilon}\sqrt{\mathbf{v}^{k}_i+\epsilon}\left(  \sqrt{\mathbf{v}^{k}_i+\epsilon}+\sqrt{\mathbf{v}^{k-1}_i+\epsilon}\right)}\right| \\
   	&=M\sum_{i=1}^{d}\left|  \frac{\mathbf{v}^{k}_i-\mathbf{v}^{k-1}_i }{\sqrt{\mathbf{v}^{k-1}_i+\epsilon}\sqrt{\mathbf{v}^{k}_i+\epsilon}\left(  \sqrt{\mathbf{v}^{k}_i+\epsilon}+\sqrt{\mathbf{v}^{k-1}_i+\epsilon}\right)}\right| \\
   	&\overset{(b)}{\le}
   	\frac{M}{2\epsilon^{\frac{3}{2}}} \sum_{i=1}^{d}\left|  \mathbf{v}^{k}_i-\mathbf{v}^{k-1}_i \right| 
   	\overset{(c)}{=}\frac{M}{2\epsilon^{\frac{3}{2}}}(1-\theta_k) \sum_{i=1}^{d}\left| ({\mathbf{g}}^{k}_i)^2-{\mathbf{v}}^{k-1}_i\right| \ a.s.,
   	\end{aligned}
\end{equation}
where $(a)$ follows from the conclusion of Lemma \ref{Lemmbound}, that is,  $\lVert\mathbf{m}^k \rVert\le M\ a.s.$, $(b)$ is due to the fact that each component of $\mathbf{v}^k$ is non-negative, equation $(c)$ depends on the  iteration format  that $\mathbf{v}^k=\theta_k\mathbf{v}^{k-1}+(1-\theta_k)(\mathbf{g}^k)^2$, which means  ${\mathbf{v}}^{k}-{\mathbf{v}}^{k-1}=(1-\theta_k )\left( ({\mathbf{g}}^{k})^2-{\mathbf{v}}^{k-1}\right)$. Upon applying the absolute value inequality that $\left| a-b\right|\le\left| a\right|+\left| b\right| $ holds for $\forall\, a,\, b\in\mathbb{R}$, we have
\begin{equation}\label{yogi.2}
   	\begin{aligned}
   	&\sum_{i=1}^{d}\left| (\mathbf{g}^{k}_i)^2-\mathbf{v}^{k-1}_i\right|
   	\le \sum_{i=1}^{d}\left( \left| \mathbf{g}^{k}_i\right|^2 +\left| \mathbf{v}^{k-1}_i\right|\right)
    = \left\| {\mathbf{g}}^{k}\right\|^2 +\left\| {\mathbf{v}}^{k-1}\right\|_1
    \\
   	&\overset{(d)}{\le} M^2+\sqrt{d}\left\| {\mathbf{v}}^{k-1}\right\|
   	\overset{(e)}{\le} M^2+\sqrt{d}M^2
   	\le 2\sqrt{d}M^2 \ a.s.,
   	\end{aligned}
\end{equation}
where $(d)$ is because $\left\| \mathbf{x}\right\|_1\le\sqrt{d}\left\| \mathbf{x}\right\|$ holds for $\forall\,\mathbf{x}\in\mathbb{R}^d$, and the condition that $\left\| \mathbf{g}^k\right\| \le M\ a.s.$, $(e)$  follows from the conclusion $ 
\lVert \mathbf{v}^k \rVert\le M^2\ a.s.$ in Lemma \ref{Lemmbound}. 
Substituting \eqref{yogi.2} back into \eqref{yogi.1} gives
\begin{equation}
   	\begin{aligned}
   	\left\| \frac{\mathbf{m}^k}{\sqrt{\mathbf{v}^{k-1}+\epsilon}}-\frac{\mathbf{m}^k}{\sqrt{\mathbf{v}^{k}+\epsilon}}\right\|
   	&\le\frac{M}{2\epsilon^{\frac{3}{2}}}(1-\theta_k)2\sqrt{d}M^2
   	=\frac{\sqrt{d}M^3}{\epsilon^{\frac{3}{2}}}(1-\theta_k)\ a.s.\nonumber
   	\end{aligned}
\end{equation}
This completes the proof.
\end{proof}

\begin{lemma}\label{LemTheta}
Let $({\mathbf{x}}^k)_{k\ge 1}$, $({\mathbf{m}}^k)_{k\ge 1}$ and $({\mathbf{v}}^k)_{k\ge 1}$ be the sequences generated by the Algorithm \ref{Gadafom}. Suppose that Assumptions \ref{Assunbia} and \ref{Assmin}  hold, then for $\forall\, k\ge 1$, we have
\begin{align}\label{LemTheta.1}
   	\mathbb{E}\left[ \left\langle \nabla f(\mathbf{x}^k),\frac{\mathbf{m}^k}{{\sqrt{{\mathbf{v}}^k+\epsilon}}}\right\rangle\right]
   	\ge\sum_{j=1}^{k}\prod_{t=j+1}^{k}{\beta_{t}}D_j,\nonumber
\end{align}
where
\begin{align}
   	D_j:=&-L{\beta_j} \eta_{j-1}\mathbb{E}\left[\left\| \frac{ \mathbf{m}^{j-1}}{{\sqrt{\mathbf{v}^{j-1}+\epsilon}}}\right\|^2\right] 
   	+\frac{1-{\beta_j}}{{\sqrt{M^2+\epsilon}}}\mathbb{E}\left[\left\| \nabla f(\mathbf{x}^j)\right\|^2\right]
   	-\frac{d  M^4}{\epsilon^{\frac{3}{2}}}(1-\theta_j).\nonumber
\end{align}	
\end{lemma}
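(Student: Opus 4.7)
\textbf{Proof proposal for Lemma \ref{LemTheta}.} My plan is to prove a one-step recursive inequality of the form
\[
\mathbb{E}\left[\left\langle \nabla f(\mathbf{x}^k),\frac{\mathbf{m}^k}{\sqrt{\mathbf{v}^k+\epsilon}}\right\rangle\right]
\ge
\beta_k\,\mathbb{E}\left[\left\langle \nabla f(\mathbf{x}^{k-1}),\frac{\mathbf{m}^{k-1}}{\sqrt{\mathbf{v}^{k-1}+\epsilon}}\right\rangle\right]
+ D_k,
\]
and then unroll it from $k$ down to $1$, producing the weighted sum $\sum_{j=1}^{k}\prod_{t=j+1}^{k}\beta_t D_j$ (with the usual convention that empty products equal one, which handles the base case $k=1$).

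To establish this one-step bound, I would first insert the momentum update $\mathbf{m}^k=\beta_k\mathbf{m}^{k-1}+(1-\beta_k)\mathbf{g}^k$ into the inner product, giving a $\beta_k$-piece involving $\mathbf{m}^{k-1}$ and a $(1-\beta_k)$-piece involving $\mathbf{g}^k$. For the $\mathbf{g}^k$ piece I would add and subtract $\mathbf{g}^k/\sqrt{\mathbf{v}^{k-1}+\epsilon}$: the $\mathcal{F}^k$-measurable preconditioner $\sqrt{\mathbf{v}^{k-1}+\epsilon}$ allows the conditional expectation $\mathbb{E}[\mathbf{g}^k\mid\mathcal{F}^k]=\nabla f(\mathbf{x}^k)$ to be taken inside, producing $(1-\beta_k)\|(\nabla f(\mathbf{x}^k))^2/\sqrt{\mathbf{v}^{k-1}+\epsilon}\|_1$, which is lower bounded by $\frac{1-\beta_k}{\sqrt{M^2+\epsilon}}\|\nabla f(\mathbf{x}^k)\|^2$ via Lemma \ref{LemGlow}. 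The residual from the preconditioner swap is controlled via Cauchy--Schwarz, the bound $\|\nabla f(\mathbf{x}^k)\|\le M\ a.s.$, and the argument underlying Lemma \ref{yogiLem} applied to $\mathbf{g}^k$, yielding an error proportional to $(1-\theta_k)$.

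For the $\beta_k$-piece, I would again swap $\sqrt{\mathbf{v}^k+\epsilon}$ with $\sqrt{\mathbf{v}^{k-1}+\epsilon}$ using the analogue of Lemma \ref{yogiLem} for $\mathbf{m}^{k-1}$ (its proof transfers verbatim), then swap $\nabla f(\mathbf{x}^k)$ for $\nabla f(\mathbf{x}^{k-1})$ using $L$-smoothness together with the Adam update $\mathbf{x}^k-\mathbf{x}^{k-1}=-\eta_{k-1}\mathbf{m}^{k-1}/\sqrt{\mathbf{v}^{k-1}+\epsilon}$. Cauchy--Schwarz on the mismatch term produces precisely $-L\beta_k\eta_{k-1}\|\mathbf{m}^{k-1}/\sqrt{\mathbf{v}^{k-1}+\epsilon}\|^2$, matching the first summand of $D_k$. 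The preconditioner-swap error contributes a further $(1-\theta_k)$-scaled term; combining it with the one from the $\mathbf{g}^k$ piece and using the crude bounds $\|\mathbf{v}^{k-1}\|_1\le dM^2$ and $\|\mathbf{g}^k\|^2\le M^2$ inside the analogue of \eqref{yogi.2} gives the combined error $\tfrac{dM^4}{\epsilon^{3/2}}(1-\theta_k)$ in $D_k$.

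The main obstacle I expect is the delicate bookkeeping in the $\beta_k$-piece: the two swaps (first on $\mathbf{v}$, then on $\nabla f(\mathbf{x}^k)$) have to be done in the right order so that the $L$-smoothness step produces $\|\mathbf{m}^{k-1}/\sqrt{\mathbf{v}^{k-1}+\epsilon}\|^2$ (not a cross term involving $\sqrt{\mathbf{v}^k+\epsilon}$), and the two $(1-\theta_k)$-scaled residuals from the $\beta_k$ and $(1-\beta_k)$ pieces must combine cleanly so that the $\beta_k$ out front of the residual dissolves and leaves a clean dimension constant $dM^4/\epsilon^{3/2}$. Once the one-step inequality is in place, the unrolling is a straightforward induction on $k$.
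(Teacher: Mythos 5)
Your proposal is correct and follows essentially the same route as the paper's proof: a one-step recursion $\Theta_k\ge\beta_k\Theta_{k-1}+D_k$ unrolled with $\Theta_0=0$, using Lemma \ref{LemGlow} after conditioning on $\mathcal{F}^k$, Lemma \ref{yogiLem} for the $(1-\theta_k)$ preconditioner-swap error, and $L$-smoothness with Cauchy--Schwarz for the $-L\beta_k\eta_{k-1}$ term. The only (cosmetic) difference is that the paper swaps $\sqrt{\mathbf{v}^k+\epsilon}$ for $\sqrt{\mathbf{v}^{k-1}+\epsilon}$ once for the whole $\mathbf{m}^k$ before expanding the momentum update, whereas you expand first and swap separately on the $\beta_k$ and $(1-\beta_k)$ pieces; the two residuals indeed recombine to the same $\sqrt{d}\,M^4\epsilon^{-3/2}(1-\theta_k)$ bound since $\beta_k+(1-\beta_k)=1$.
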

\begin{proof}
First, suppose that
\begin{align}
   	\Theta_k:=\mathbb{E}\left[ \left\langle \nabla f(\mathbf{x}^k),\frac{\mathbf{m}^k}{{\sqrt{{\mathbf{v}}^k+\epsilon}}}\right\rangle\right].\nonumber
\end{align}
then we can split $\Theta_k$ as follows
\begin{equation}\label{th1.7}
   	\begin{aligned}
   	\Theta_k=\underbrace{\mathbb{E}\left[ \left\langle \nabla f(\mathbf{x}^k),\frac{\mathbf{m}^k}{\sqrt{\mathbf{v}^{k-1}+\epsilon}}\right\rangle\right]}_{\Rmnum{1}}+\underbrace{\mathbb{E}\left[ \left\langle \nabla f(\mathbf{x}^k),\frac{\mathbf{m}^k}{\sqrt{{\mathbf{v}}^k+\epsilon}}-\frac{\mathbf{m}^k}{\sqrt{\mathbf{v}^{k-1}+\epsilon}}\right\rangle\right]}_{\Rmnum{2}}.
   	\end{aligned}
\end{equation}
For $\Rmnum{1}$, since  $\mathbf{m}^k=\beta_k\mathbf{m}^{k-1}+(1-\beta_k)\mathbf{g}^k$ in Algorithm \ref{Gadafom}, we then arrive at

\begin{equation}
	\begin{aligned}
		&\mathbb{E}\left[ \left\langle \nabla f(\mathbf{x}^k),\frac{\mathbf{m}^k}{\sqrt{\mathbf{v}^{k-1}+\epsilon}}\right\rangle \bigg| \mathcal{F}^k\right]
		=\mathbb{E}\left[ \left\langle \nabla f(\mathbf{x}^k),\frac{\beta_k \mathbf{m}^{k-1}+(1-\beta_k )\mathbf{g}^k}{\sqrt{\mathbf{v}^{k-1}+\epsilon}}\right\rangle \bigg| \mathcal{F}^k\right]\\
		&{\overset{(a)}{=}}\beta_k\left\langle \nabla f(\mathbf{x}^k),\frac{ \mathbf{m}^{k-1}}{\sqrt{\mathbf{v}^{k-1}+\epsilon}}\right\rangle
		+(1-\beta_k)\left\langle \nabla f(\mathbf{x}^k),\frac{\mathbb{E}\left[ \mathbf{g}^k|
		\mathcal{F}^k\right] }{\sqrt{\mathbf{v}^{k-1}+\epsilon}}\right\rangle\\
		&{\overset{(b)}{=}}
		\beta_k\left\langle \nabla f(\mathbf{x}^k),\frac{ \mathbf{m}^{k-1}}{\sqrt{\mathbf{v}^{k-1}+\epsilon}}\right\rangle+(1-\beta_k)\left\langle \nabla f(\mathbf{x}^k),\frac{\nabla f(\mathbf{x}^k) }{\sqrt{\mathbf{v}^{k-1}+\epsilon}}\right\rangle,\nonumber
	\end{aligned}
\end{equation}
where $(a)$ is because $\mathbf{x}^{k}$, $\mathbf{m}^{k-1}$ and $\mathbf{v}^{k-1}$ are $\mathcal{F}^k$-measurable, $(b)$ is because of the unbiased estimation assumption,  that is,   $\mathbb{E}[ \mathbf{g}^k|\mathcal{F}^k] = \nabla f(\mathbf{x}^k)$. Further, from the fact that $\langle\nabla f(\mathbf{x}^{k}),\nabla f(\mathbf{x}^{k})/\sqrt{\mathbf{v}^{k-1}+\epsilon}\rangle=\lVert\nabla \big(f(\mathbf{x}^{k})\big)^{2}/\sqrt{\mathbf{v}^{k-1}+\epsilon}\rVert_{1}$, we have
\begin{equation}\label{th1.3}
	\begin{aligned}
		\mathbb{E}\left[ \left\langle \nabla f(\mathbf{x}^k),\frac{\mathbf{m}^k}{\sqrt{\mathbf{v}^{k-1}+\epsilon}}\right\rangle \bigg| \mathcal{F}^k\right]
		&{=}\beta_k\left\langle \nabla f(\mathbf{x}^k),\frac{ \mathbf{m}^{k-1}}{\sqrt{\mathbf{v}^{k-1}+\epsilon}}\right\rangle+(1-\beta_k)\left\| \frac{(\nabla f(\mathbf{x}^k))^2 }{\sqrt{\mathbf{v}^{k-1}+\epsilon}}\right\|_1\\
		&{\overset{(c)}{=}}
		\beta_k\left\langle \nabla f(\mathbf{x}^{k-1}),\frac{ \mathbf{m}^{k-1}}{\sqrt{\mathbf{v}^{k-1}+\epsilon}}\right\rangle
		+(1-\beta_k)\left\| \frac{(\nabla f(\mathbf{x}^k))^2 }{\sqrt{\mathbf{v}^{k-1}+\epsilon}}\right\|_1
		\\
		&{-}\beta_k\left\langle \nabla f(\mathbf{x}^{k-1})-\nabla f(\mathbf{x}^k),\frac{ \mathbf{m}^{k-1}}{\sqrt{\mathbf{v}^{k-1}+\epsilon}}\right\rangle
		,
	\end{aligned}
\end{equation}
where $(c)$ is obtained by  adding and subtracting term  $\beta_k\langle  \nabla f(\mathbf{x}^{k-1}), \mathbf{m}^{k-1}/\sqrt{\mathbf{v}^{k-1}+\epsilon}\rangle $.

\par For the third term on the right-hand side of \eqref{th1.3},  we have
\begin{equation}\label{th1.4}
   	\begin{aligned}
   	&-\beta_k\left\langle \nabla f(\mathbf{x}^{k-1})-\nabla f(\mathbf{x}^k),\frac{ \mathbf{m}^{k-1}}{\sqrt{\mathbf{v}^{k-1}+\epsilon}}\right\rangle
   	\overset{(d)}{\ge}-\beta_k\left\|  \nabla f(\mathbf{x}^{k-1})-\nabla f(\mathbf{x}^k)\right\| \left\| \frac{ \mathbf{m}^{k-1}}{\sqrt{\mathbf{v}^{k-1}+\epsilon}}\right\| \\
   	&\overset{(e)}{\ge}
   	-\beta_k L\left\|   \mathbf{x}^{k-1}-\mathbf{x}^k\right\| \left\| \frac{ \mathbf{m}^{k-1}}
   	{\sqrt{\mathbf{v}^{k-1}+\epsilon}}
   	\right\| 
   	\overset{(f)}{=}-L\beta_k \eta_{k-1}\left\| \frac{ \mathbf{m}^{k-1}}{\sqrt{\mathbf{v}^{k-1}+\epsilon}}\right\|^2,
   	\end{aligned}
\end{equation}
where $(d)$ uses the Cauchy-Schwarz inequality $\left\langle a,b\right \rangle \le\left\| a\right\| \left\| b\right\| $, $(e)$ is due to Assumption \ref {Assmin} that $f$ is $L$-smooth, $(f)$ is because of the iterate format   $\mathbf{x}^{k+1}=\mathbf{x}^k-\eta_k{\mathbf{m}^k}/{\sqrt{{\mathbf{v}}^k+\epsilon}}$. Upon substituting \eqref{th1.4} back into \eqref{th1.3}, we have
\begin{equation}\label{en.1}
   	\begin{aligned}
   	\mathbb{E}\left[ \left\langle \nabla f(\mathbf{x}^k),\frac{\mathbf{m}^k}{\sqrt{\mathbf{v}^{k-1}+\epsilon}}\right\rangle \bigg| \mathcal{F}^k\right]
   	&\ge \beta_k\left\langle \nabla f(\mathbf{x}^{k-1}),\frac{ \mathbf{m}^{k-1}}{\sqrt{\mathbf{v}^{k-1}+\epsilon}}\right\rangle\\
   	&-L\beta_k \eta_{k-1}\left\| \frac{ \mathbf{m}^{k-1}}{\sqrt{\mathbf{v}^{k-1}+\epsilon}}\right\|^2
   	+(1-\beta_k)\left\| \frac{(\nabla f(\mathbf{x}^k))^2 }{\sqrt{\mathbf{v}^{k-1}+\epsilon}}\right\|_1.
   	\end{aligned}
\end{equation} 
Taking total expectation on both sides of \eqref{en.1}, since the tower property of the conditional expectation gives $\mathbb{E}[\mathbb{E}[\,\cdot\,|\mathcal{F}^k]]=\mathbb{E}[\,\cdot\,]$, then we can get
\begin{equation}\label{th1.5}
   	\begin{aligned}
   	\uppercase\expandafter
   	{\romannumeral1}
   	&=\mathbb{E}\left[ \left\langle \nabla f(\mathbf{x}^k),\frac{\mathbf{m}^k}{\sqrt{\mathbf{v}^{k-1}+\epsilon}}\right\rangle\right]
   	=\mathbb{E}\left[\mathbb{E}\left[ \left\langle \nabla f(\mathbf{x}^k),\frac{\mathbf{m}^k}{\sqrt{\mathbf{v}^{k-1}+\epsilon}}\right\rangle\bigg|\mathcal{F}^k\right]\right]\\
   	&\ge\beta_k\mathbb{E}\left[\left\langle \nabla f(\mathbf{x}^{k-1}),\frac{ \mathbf{m}^{k-1}}{\sqrt{\mathbf{v}^{k-1}+\epsilon}}\right\rangle\right]
   	\\
   	&
   	-L\beta_k \eta_{k-1}\mathbb{E}\left[\left\| \frac{ \mathbf{m}^{k-1}}{\sqrt{\mathbf{v}^{k-1}+\epsilon}}\right\|^2\right] 
   	+(1-\beta_k)\mathbb{E}\left[\left\| \frac{(\nabla f(\mathbf{x}^k))^2 }{\sqrt{\mathbf{v}^{k-1}+\epsilon}}\right\|_1\right] \\
   	&\overset{(g)}{\ge}
   	\beta_k\Theta_{k-1}
   	-L\beta_k \eta_{k-1}\mathbb{E}\left[\left\| \frac{ \mathbf{m}^{k-1}}{\sqrt{\mathbf{v}^{k-1}+\epsilon}}\right\|^2\right] 
   	+\frac{1-\beta_k}{\sqrt{M^2+\epsilon}}\mathbb{E}\left[\left\| \nabla f(\mathbf{x}^k)\right\|^2\right] ,
   	\end{aligned}
\end{equation}
where $(g)$ applies Lemma \ref{LemGlow},  that is, 
$\|{(\nabla f(\mathbf{x}^k))^2}/{\sqrt{\mathbf{v}^{k-1}+\epsilon}}\|_1  \ge {\left\|  \nabla f(\mathbf{x}^k)\right\|^2}/{\sqrt{M^2+\epsilon}}\ a.s.$\\
   	
On the other hand, for $\Rmnum{2}$, it is easy to obtain that
\begin{equation}\label{th1.6}
   	\begin{aligned}
   	\Rmnum{2}
   	&{=}\mathbb{E}
   	\left[ \left\langle \nabla f(\mathbf{x}^k),\frac{\mathbf{m}^k}{\sqrt{{\mathbf{v}}^k+\epsilon}}-\frac{\mathbf{m}^k}{\sqrt{\mathbf{v}^{k-1}+\epsilon}}\right\rangle\right]
   	\\
   	&{=}-\mathbb{E}\left[ \left\langle \nabla f(\mathbf{x}^k),\frac{\mathbf{m}^k}{\sqrt{\mathbf{v}^{k-1}+\epsilon}}-\frac{\mathbf{m}^{k}}{\sqrt{\mathbf{v}^{k}+\epsilon}}\right\rangle\right]\\
   	&{\overset{(h)}{\ge}}
   	-\mathbb{E}\left[\left\|  \nabla f(\mathbf{x}^k)\right\|  \left\| \frac{\mathbf{m}^k}{\sqrt{\mathbf{v}^{k-1}+\epsilon}}-\frac{\mathbf{m}^k}{\sqrt{\mathbf{v}^{k}+\epsilon}}\right\| \right]\\
   	&{\overset{(i)}{\ge}}
   	-\frac{\sqrt{d} M^4}{\epsilon^{\frac{3}{2}}}(1-\theta_k),
   	\end{aligned}
\end{equation}
where $(h)$ uses the Cauchy-Schwarz inequality $\left\langle a,b\right\rangle \le\left\| a\right\| \left\| b\right\| $, $(i)$ relies on  Lemma \ref{yogiLem},    that is,  $\| {\mathbf{m}^k}/{\sqrt{\mathbf{v}^{k-1}+\epsilon}}-{\mathbf{m}^k}/{\sqrt{\mathbf{v}^{k}+\epsilon}}\|\le {\sqrt{d}M^3}(1-\theta_k)/{\epsilon^{\frac{3}{2}}}\ a.s.$ and Assumption \ref{Assunbia},  that is,   $\|  \nabla f(\mathbf{x}^k)\|  =\| \mathbb{E}\left[ \mathbf{g}^k|\mathcal{F}^k\right] \|  \le\mathbb{E}\left[ \left\| \mathbf{g}^k\right\|  |\mathcal{F}^k\right]\le M\ a.s.$ Upon plugging  \eqref{th1.5} and  \eqref{th1.6} back into  \eqref{th1.7}, we obtain
\begin{equation}
   	\begin{aligned}
   	\Theta_k
   	&\ge\beta_k\Theta_{k-1}
   	\\
   	&-L\beta_k \eta_{k-1}\mathbb{E}\left[\left\| \frac{ \mathbf{m}^{k-1}}{\sqrt{\mathbf{v}^{k-1}+\epsilon}}\right\|^2\right] 
   	+\frac{1-\beta_k}{\sqrt{M^2+\epsilon}}\mathbb{E}\left[\left\| \nabla f(\mathbf{x}^k)\right\|^2\right]-\frac{\sqrt{d} M^4}{\epsilon^{\frac{3}{2}}}(1-\theta_k).
   	\end{aligned}
\end{equation}
Upon let
\begin{equation}\label{th1.15}
   	\begin{aligned}
   	D_k:=&-L\beta_k \eta_{k-1}\mathbb{E}\left[\left\| \frac{ \mathbf{m}^{k-1}}{\sqrt{\mathbf{v}^{k-1}+\epsilon}}\right\|^2\right] 
   	+\frac{1-\beta_k}{\sqrt{M^2+\epsilon}}\mathbb{E}\left[\left\| \nabla f(\mathbf{x}^k)\right\|^2\right]
   	-\frac{\sqrt{d} M^4}{\epsilon^{\frac{3}{2}}}(1-\theta_k).\nonumber
   	\end{aligned}
\end{equation}
Then we can conclude that
\begin{equation}\label{th1.8}
   	\begin{aligned}
   	\Theta_k
   	&\ge\beta_k\Theta_{k-1}+D_k
   	\ge\beta_k\beta_{k-1}\Theta_{k-2}+\beta_k D_{k-1}+D_k\\
    &\ge\,\cdots\,
   	\ge\prod_{j=1}^{k}\beta_{j}\Theta_0+\sum_{j=1}^{k}\prod_{t=j+1}^{k}\beta_{t}D_j
   	\overset{(j)}{=}\sum_{j=1}^{k}\prod_{t=j+1}^{k}\beta_{t}D_j,
   	\end{aligned}
\end{equation}
where $(j)$ uses $\Theta_0=\mathbb{E}\big[\langle \nabla f(\mathbf{x}^{0}),{ \mathbf{m}^{0}}/{\sqrt{{\mathbf{v}}^{0}+\epsilon}}\rangle\big]
=\langle \nabla f(\mathbf{x}^{0}),{ \mathbf{m}^{0}}/{\sqrt{{\mathbf{v}}^{0}+\epsilon}}\rangle=0$. Here, we follow the convention that $\prod_{t=k+1}^{k}\beta_{t}=1$.The proof of the lemma is now complete.
\end{proof}
\subsection{Proof of Theorem \ref{th1}}\label{Proof of th1}
Combining the condition that $f$ is $L$-smooth and the Descent Lemma in  \citep{nesterov2003introductory}, we have
\begin{equation}
   	\begin{aligned}
   	f(\mathbf{x}^{k+1})\le&f(\mathbf{x}^k)+\left\langle \nabla f(\mathbf{x}^k),\mathbf{x}^{k+1}-\mathbf{x}^k\right\rangle +\frac{L}{2}\left\|  \mathbf{x}^{k+1}-\mathbf{x}^k\right\|  ^2\\
   	\overset{(a)}{=}&f(\mathbf{x}^k)-\eta_k\left\langle \nabla f(\mathbf{x}^k),\frac{\mathbf{m}^k}{\sqrt{{\mathbf{v}}^k+\epsilon}}\right\rangle +\frac{L\eta_k^2}{2}\left\|  \frac{\mathbf{m}^k}{\sqrt{{\mathbf{v}}^k+\epsilon}}\right\| ^2,\nonumber
   	\end{aligned}
\end{equation}
where $(a)$ is due to the iteration format $\mathbf{x}^{k+1}=\mathbf{x}^k-\eta_k{\mathbf{m}^k}/{\sqrt{{\mathbf{v}}^k+\epsilon}}$. Upon taking the  total expectation on both sides of the above equation gives
\begin{equation}\label{th1.2}
   	\begin{aligned}
   	\mathbb{E}\left[ f(\mathbf{x}^{k+1})\right]
   	\le&\mathbb{E}\left[ f(\mathbf{x}^k)\right] -\eta_k\mathbb{E}\left[ \left\langle \nabla f(\mathbf{x}^k),\frac{\mathbf{m}^k}{\sqrt{{\mathbf{v}}^k+\epsilon}}\right\rangle\right] +\frac{L\eta_k^2}{2}\mathbb{E}\left[ \left\|  \frac{\mathbf{m}^k}{\sqrt{{\mathbf{v}}^k+\epsilon}}\right\| ^2\right] \\
   	\overset{(b)}{\le}&\mathbb{E}\left[ f(\mathbf{x}^k)\right] -\eta_k\sum_{j=1}^{k}\prod_{t=j+1}^{k}\beta_{t}D_j  +\frac{L\eta_k^2}{2}\mathbb{E}\left[ \left\|  \frac{\mathbf{m}^k}{\sqrt{{\mathbf{v}}^k+\epsilon}}\right\| ^2\right] \\
   	=&\mathbb{E}\left[ f(\mathbf{x}^k)\right] -\eta_k\sum_{j=1}^{k}\prod_{t=j+1}^{k}\beta_{t}\left( -L\beta_{j} \eta_{j-1}\mathbb{E}\left[\left\| \frac{ \mathbf{m}^{j-1}}{\sqrt{\mathbf{v}^{j-1}+\epsilon}}\right\|^2\right] 
   	\right.\\
   	+&\left.\frac{1-\beta_{j}}{\sqrt{M^2+\epsilon}}\mathbb{E}\left[\left\| \nabla f(\mathbf{x}^j)\right\|^2\right]
   	-\frac{\sqrt{d}  M^4}
   	{\epsilon^{\frac{3}{2}}}(1-\theta_j)\right) +\frac{L\eta_k^2}{2}\mathbb{E}\left[ \left\|  \frac{\mathbf{m}^k}{\sqrt{{\mathbf{v}}^k+\epsilon}}\right\| ^2\right], \nonumber
   	\end{aligned}
\end{equation}
where $(b)$ follows from Lemma \ref{LemTheta}. Further calculation of the above equation yields
\begin{equation}\label{th1.22}
   	\begin{aligned}
   	\mathbb{E}\left[ f(\mathbf{x}^{k+1})\right]
   	&\le\mathbb{E}\left[ f(\mathbf{x}^k)\right] 
   	-\frac{1}{\sqrt{M^2+\epsilon}}\eta_k\sum_{j=1}^{k}\prod_{t=j+1}^{k}\beta_{t}(1-\beta_{j})\mathbb{E}\left[\left\| \nabla f(\mathbf{x}^j)\right\|^2\right]\\
   	&+\frac{L\eta_k^2}{2}\mathbb{E}\left[ \left\|  \frac{\mathbf{m}^k}{\sqrt{{\mathbf{v}}^k+\epsilon}}\right\| ^2\right]
   	+L\eta_k\sum_{j=1}^{k}\prod_{t=j}^{k}\beta_{t} \eta_{j-1}\mathbb{E}\left[\left\| \frac{ \mathbf{m}^{j-1}}{\sqrt{\mathbf{v}^{j-1}+\epsilon}}\right\|^2\right]\\
   	&+\frac{\sqrt{d}  M^4}{\epsilon^{\frac{3}{2}}}\eta_k\sum_{j=1}^{k}\prod_{t=j+1}^{k}\beta_{t}(1-\theta_j).\nonumber
   	\end{aligned}
\end{equation}
Upon by the condition that $0\le\beta_k\le\beta<1$, we can obtain
\begin{equation}\label{th1.17}
   	\begin{aligned}
   	\mathbb{E}\left[ f(\mathbf{x}^{k+1})\right]
   	&\overset{(c)}{\le}
   	\mathbb{E}\left[ f(\mathbf{x}^k)\right] 
   	-\frac{1-\beta}{\sqrt{M^2+\epsilon}}\eta_k\mathbb{E}\left[\left\| \nabla f(\mathbf{x}^k)\right\|^2\right]
   	+\frac{L\eta_k^2}{2}\mathbb{E}\left[ \left\|  \frac{\mathbf{m}^k}{\sqrt{{\mathbf{v}}^k+\epsilon}}\right\| ^2\right]\\
   	&+\beta L\eta_k\sum_{j=1}^{k}\beta^{k-j} \eta_{j-1}\mathbb{E}\left[\left\| \frac{ \mathbf{m}^{j-1}}{\sqrt{\mathbf{v}^{j-1}+\epsilon}}\right\|^2\right]
   	+\frac{\sqrt{d} M^4}{\epsilon^{\frac{3}{2}}}\eta_k\sum_{j=1}^{k}\beta^{k-j}(1-\theta_j) ,
   	\end{aligned}
\end{equation}
where $(c)$ is due to $\prod_{t=j}^{k}\beta_{t}\le\beta^{k-j+1}$ and $\prod_{t=j+1}^{k}\beta_{t}\le\beta^{k-j}$ when $0\le\beta_k\le\beta<1$, and the fact that
\begin{equation}
   	\begin{aligned}
   	&\sum_{j=1}^{k}\prod_{t=j+1}^{k}\beta_{t}(1-\beta_{j})\mathbb{E}\left[\left\| \nabla f(\mathbf{x}^j)\right\|^2\right]
   	\overset{(d)}{\ge}\prod_{t=k+1}^{k}\beta_{t}(1-\beta_k)\mathbb{E}\left[\left\| \nabla f(\mathbf{x}^k)\right\|^2\right]\\
    &	\overset{(e)}{=}(1-\beta_k)\mathbb{E}\left[\left\| \nabla f(\mathbf{x}^k)\right\|^2\right]
   	\ge(1-\beta)\mathbb{E}\left[\left\| \nabla f(\mathbf{x}^k)\right\|^2\right],\nonumber
   	\end{aligned}
\end{equation}
where $(d)$ is because $\prod_{t=j+1}^{k}\beta_{t}(1-\beta_{j})\mathbb{E}\big[\| \nabla f(\mathbf{x}^j)\|^2\big]\ge 0$ holds for any $j$ and $\sum_{j=1}^{k}a_j\ge a_k$ holds for $a_j\ge 0$ with $a_j:=\prod_{t=j+1}^{k}\beta_{t}(1-\beta_{j})\mathbb{E}\big[\| \nabla f(\mathbf{x}^j)\|^2\big]$, $(e)$ holds by the stipulation that $\prod_{t=k+1}^{k}\beta_{t}=1$. Recalling
that $\|  {\mathbf{m}^k}/{\sqrt{{\mathbf{v}}^k+\epsilon}}\|^2\le {M^2}/{\epsilon}\ a.s.$ is obtained in Lemma \ref{Lemmbound}, therefore, \eqref{th1.17} can be estimated as
\begin{equation}\label{th1.14}
   	\begin{aligned}
   	\mathbb{E}\left[ f(\mathbf{x}^{k+1})\right] 
   	&\le
   	\mathbb{E}\left[ f(\mathbf{x}^k)\right] 
   	-\frac{1-\beta}{\sqrt{M^2+\epsilon}}\eta_k\mathbb{E}\left[\left\| \nabla f(\mathbf{x}^k)\right\|^2\right]\\
   	&+\frac{\beta L M^2}{\epsilon}\eta_k\sum_{j=1}^{k}\beta^{k-j} \eta_{j-1}
   	+\frac{\sqrt{d} M^4}{\epsilon^{\frac{3}{2}}}\eta_k\sum_{j=1}^{k}\beta^{k-j}(1-\theta_j)
   	+\frac{M^2L}{2\epsilon}\eta_k^2 .
   	\end{aligned}
\end{equation}
Upon rearranging the above equation and summing both sides over $k$ from $1$ to $K$, we have
\begin{equation}\label{th1.12}
   	\begin{aligned}
   	&\frac{1-\beta}{\sqrt{M^2+\epsilon}}\sum_{k=1}^{K}\eta_k\mathbb{E}\left[\left\| \nabla f(\mathbf{x}^k)\right\|^2\right]\\
   	&\le f(\mathbf{x}^1)-f^*
   	+\frac{\beta L M^2}{\epsilon}\sum_{k=1}^{K}\eta_k\sum_{j=1}^{k}\beta^{k-j} \eta_{j-1}
   	+\frac{\sqrt{d} M^4}{\epsilon^{\frac{3}{2}}}\sum_{k=1}^{K}\eta_k\sum_{j=1}^{k}\beta^{k-j}(1-\theta_j)
   	+\frac{M^2L}{2\epsilon}\sum_{k=1}^{K}\eta_k^2.
   	\end{aligned}
\end{equation}
By the condition that $f(\mathbf{x})\geq f^{*}$ in Assumption \ref{Assmin}, the first two terms in the right-hand side of \eqref{th1.12} are derived from $\sum_{k=1}^{K}\left( \mathbb{E}[ f(\mathbf{x}^{k})]-\mathbb{E}[ f(\mathbf{x}^{k+1})]\right) =f(\mathbf{x}^{1})-\mathbb{E}[ f(\mathbf{x}^{K+1})] \le f(\mathbf{x}^{1})-f^*$. 
\par Next, we estimate the second term on the right-hand side of \eqref{th1.12} as follows
\begin{equation}\label{th1.10}
   	\begin{aligned}
   	&\sum_{k=1}^{K}\eta_k\sum_{j=1}^{k}{\beta}^{k-j} \eta_{j-1}
   	\overset{(f)}{\le}\sum_{k=1}^{K}\tilde{C}_0 \alpha_k\sum_{j=1}^{k}{\beta}^{k-j} \tilde{C}_0 \alpha_{j-1}
   	=\tilde{C}_0^2\sum_{k=1}^{K}\alpha_k\sum_{j=1}^{k}{\beta}^{k-j} \alpha_{j-1}\\
   	&\overset{(g)}{\le}
   	\tilde{C}_0^2 \sum_{k=1}^{K}\sum_{j=1}^{k}{\beta}^{k-j} \alpha_j \alpha_{j-1}
   	\overset{(h)}{\le}\frac{\tilde{C}_0^2}{1-{\beta}}\sum_{k=1}^{K}\alpha_k \alpha_{k-1}
   	\overset{(i)}{=}\frac{\tilde{C}_0^2}{1-{\beta}}\sum_{k=2}^{K}\alpha_k \alpha_{k-1}\\
   	&\overset{(j)}{\le}
   	\frac{\tilde{C}_0^2}{1-{\beta}}\sum_{k=2}^{K}\alpha_{k-1}^2
   	=\frac{\tilde{C}_0^2}{1-{\beta}}\sum_{k=1}^{K-1}\alpha_{k}^2 
   	\le\frac{\tilde{C}_0^2}{1-{\beta}} \sum_{k=1}^{K} \alpha_{k}^2
   	\overset{(k)}{\le}\frac{\tilde{C}_0^2}{C_0^2(1-{\beta})}\sum_{k=1}^{K} \eta_{k}^2.
   	\end{aligned}
\end{equation}
Without loss of generality, we assume that $\eta_0=0$ in \eqref{th1.8}. Thus, we can deduce that $\alpha_0=0$, leading to the fulfilment of equation 
$(i)$. Since the inequality $C_0\alpha_{k}\le\eta_{k}\le \tilde{C}_0 \alpha_k$ holds for $\forall k\ge 0$, which makes  $(f)$ and $(k)$ hold. $(g)$ and $(j)$ can be inferred by using the fact that $\alpha_k$ is non-increasing. $(h)$ uses \eqref{Lemsum.1} in Lemma \ref{Lemsum},  that is,  $\sum_{k=1}^{K}\sum_{j=1}^{k}{\beta}^{k-j}b_j\le\frac{1}{1-{\beta}}\sum_{k=1}^{K}b_k$ with $b_j:=\alpha_j \alpha_{j-1}$. Upon for  the third term of the right-hand side of \eqref{th1.12}, we have
\begin{equation}\label{th1.11}
   	\begin{aligned}
   	&\sum_{k=1}^{K}\eta_k\sum_{j=1}^{k}{\beta}^{k-j}{(1-\theta_j)}
   	\overset{(l)}{\le}\tilde{C}_0\sum_{k=1}^{K}\sum_{j=1}^{k}{\beta}^{k-j}\alpha_j{(1-\theta_j)}\\
   	&\overset{(m)}{\le}
   	\frac{\tilde{C}_0 }{1-\beta}\sum_{k=1}^{K}\alpha_k{(1-\theta_k)}
   	\overset{(n)}{\le}\frac{\tilde{C}_0 }{C_0(1-{\beta})}\sum_{k=1}^{K}\eta_k{(1-\theta_k)},
   	\end{aligned}
\end{equation}
where $(l)$ and $(n)$ use the conditions that $\eta_{k}=\Theta(\alpha_{k})$ and $\alpha_k$ is non-increasing, $(m)$ applies \eqref{Lemsum.1} in Lemma \ref{Lemsum},  that is,   $\sum_{k=1}^{K}\sum_{j=1}^{k}{\beta}^{k-j}b_j\le\frac{1}{1-{\beta}}\sum_{k=1}^{K}b_k$, in which we let $b_j:=\alpha_j{(1-\theta_j)}$.
Now, plugging \eqref{th1.10} and \eqref{th1.11} back into \eqref{th1.12}, we then obtain
\begin{equation}\label{en.7}
   	\begin{aligned}
   	&\frac{1-\beta}{\sqrt{M^2+\epsilon}}\sum_{k=1}^{K}\eta_k\mathbb{E}\left[\left\| \nabla f(\mathbf{x}^k)\right\|^2\right]\\
   	&\le f(\mathbf{x}^1)-f^*
   	+\frac{\tilde{C}_0^2\beta L M^2}{\epsilon C_0^2(1-\beta)} \sum_{k=1}^{K}\eta_{k}^2 
   	+{\frac{\tilde{C}_0\sqrt{d}  M^4}{\epsilon^{\frac{3}{2}}C_0(1-\beta)}}\sum_{k=1}^{K}\eta_k(1-\theta_k)
   	+\frac{M^2L}{2\epsilon}\sum_{k=1}^{K}\eta_k^2.
   	\end{aligned}
\end{equation}
Multiplying both sides of the above equation by $\frac{\sqrt{M^2+\epsilon}}{1-\beta}$ gives
\begin{equation}\label{th1.20}
   	\begin{aligned}
   	\sum_{k=1}^{K}\eta_k\mathbb{E}\left[\left\| \nabla f(\mathbf{x}^k)\right\|^2\right]
   	&\le\frac{\sqrt{M^2+\epsilon}\left( f(\mathbf{x}^1)-f^*\right) }{1-\beta}
   	+\frac{\tilde{C}_0 M^4\sqrt{d(M^2+\epsilon)}}{\epsilon^{\frac{3}{2}}C_0(1-\beta)^2}\sum_{k=1}^{K}\eta_k(1-\theta_k)\\
   	&+\left( \frac{\tilde{C}_0^2\beta L M^2\sqrt{M^2+\epsilon}}{\epsilon C_0^2(1-\beta)^2}+\frac{M^2L\sqrt{M^2+\epsilon}}{2\epsilon(1-\beta)}\right) \sum_{k=1}^{K}\eta_{k}^2.
   	\end{aligned}
\end{equation}
On considering that
\begin{equation}\label{th1.21}
   	\begin{aligned}
    \frac{\tilde{C}_0^2\beta L M^2\sqrt{M^2+\epsilon}}{\epsilon C_0^2(1-\beta)^2}+\frac{M^2L\sqrt{M^2+\epsilon}}{2\epsilon(1-\beta)}
   	&\overset{(o)}{\le}\frac{\tilde{C}_0^2 L M^2\sqrt{M^2+\epsilon}}{\epsilon C_0^2(1-\beta)^2}+\frac{M^2L\sqrt{M^2+\epsilon}C_0^2(1-\beta)}{2\epsilon C_0^2(1-\beta)^2}\\
   	&\overset{(p)}{\le}\frac{\tilde{C}_0^2 L M^2\sqrt{M^2+\epsilon}}{\epsilon C_0^2(1-\beta)^2}+\frac{C_0 M^2L\sqrt{M^2+\epsilon}}{\epsilon C_0^2(1-\beta)^2}\\
   	&\,=\frac{M^2L\sqrt{M^2+\epsilon}(\tilde{C}_0^2+C_0^2)}{\epsilon C_0^2(1-\beta)^2}
   	\overset{(q)}{\le}\frac{2\tilde{C}_0^2 M^2L\sqrt{M^2+\epsilon}}{\epsilon C_0^2(1-\beta)^2},
   	\end{aligned}
\end{equation}
where $(o)$ uses $0\le\beta<1$, $(p)$ holds by $1-\beta<2$, $(q)$ is because $0<C_0\le\tilde{C}_0$. Let
\begin{align}
   	&C_1:=\frac{\sqrt{M^2+\epsilon}\left( f(\mathbf{x}^1)-f^*\right) }{1-\beta},\nonumber\\
   	&C_2:=\frac{\tilde{C}_0 M^4\sqrt{d(M^2+\epsilon)}}{\epsilon^{\frac{3}{2}}C_0(1-\beta)^2},\nonumber\\
   	&C_3:=\frac{2\tilde{C}_0^2 M^2L\sqrt{M^2+\epsilon}}{\epsilon C_0^2(1-\beta)^2}.\nonumber
\end{align}
Substituting \eqref{th1.21} into \eqref{th1.20}, then \eqref{th1.20} becomes
\begin{equation}\label{th1.13}
   	\begin{aligned}
   	\sum_{k=1}^{K}\eta_k\mathbb{E}\left[\left\| \nabla f(\mathbf{x}^k)\right\|^2\right]
   	\le C_1
   	+C_2\sum_{k=1}^{K}\eta_k(1-\theta_k)+C_3 \sum_{k=1}^{K}\eta_{k}^2.
   	\end{aligned}
\end{equation}
Because of
\begin{equation}
   	\begin{aligned}
   	\min_{1\le k\le K}\mathbb{E}\left[\left\| \nabla f(\mathbf{x}^k)\right\|^2\right]\sum_{k=1}^{K}\eta_{k}\le\sum_{k=1}^{K}\eta_k\mathbb{E}\left[\left\| \nabla f(\mathbf{x}^k)\right\|^2\right],\nonumber
   	\end{aligned}
\end{equation}
we can derive
\begin{equation}\label{th1.16}
   	\begin{aligned}
   	\min_{1\le k\le K}\mathbb{E}\left[\left\| \nabla f(\mathbf{x}^k)\right\|^2\right]
   	\le \frac{C_1
   		+C_2\sum_{k=1}^{K}\eta_k(1-\theta_k)+C_3 \sum_{k=1}^{K}\eta_{k}^2}{\sum_{k=1}^{K}\eta_{k}}.\nonumber
   	\end{aligned}
\end{equation}
This completes the proof. 

\subsection{Proof of Theorem \ref{th4}}\label{Proof of th4}
Since Assumptions \ref{Assunbia} and \ref{Assmin} are satisfied, then the equation \eqref{th1.13} in the proof of Theorem \ref{th1} still holds,  that is, 

\begin{equation}
	\begin{aligned}
		\sum_{k=1}^{K}\eta_k\mathbb{E}\left[\left\| \nabla f(\mathbf{x}^k)\right\|^2\right]
		\le C_1
		+C_2\sum_{k=1}^{K}\eta_k(1-\theta_k)+C_3 \sum_{k=1}^{K}\eta_{k}^2,
		\nonumber
	\end{aligned}
\end{equation}
where $C_1:=\frac{\sqrt{M^2+\epsilon}\left( f(\mathbf{x}^1)-f^*\right) }{1-\beta}$, $
C_2:=\frac{\tilde{C}_0 M^4\sqrt{d(M^2+\epsilon)}}{\epsilon^{\frac{3}{2}}C_0(1-\beta)^2}$ and $
C_3:=\frac{2\tilde{C}_0^2 M^2L\sqrt{M^2+\epsilon}}{\epsilon C_0^2(1-\beta)^2}$ are all finite constants. By the condition that $C_{0}\alpha_{k}\leq \eta_{k}\leq \widetilde{C}_{0}\alpha_{k}$, we have
\begin{equation}\label{th4.2}
   	\begin{aligned}
   	\sum_{k=1}^{K}\alpha_k\mathbb{E}\left[\left\| \nabla f(\mathbf{x}^k)\right\|^2\right]
   	&\le\frac{1}{C_0}\sum_{k=1}^{K}\eta_k\mathbb{E}\left[\left\| \nabla f(\mathbf{x}^k)\right\|^2\right]\\
   	&\le \frac{C_1}{C_0}
   	+\frac{C_2}{C_0}\sum_{k=1}^{K}\eta_k{(1-\theta_k)}+\frac{C_3}{C_0} \sum_{k=1}^{K}\eta_{k}^2.
   	\end{aligned}
\end{equation}
Upon the non-increasing of $\alpha_k$ gives
\begin{equation}\label{th4.3}
   	\begin{aligned}
   	\alpha_K\sum_{k=1}^{K}\mathbb{E}\left[\left\| \nabla f(\mathbf{x}^k)\right\|^2\right]
   	\le\sum_{k=1}^{K}\alpha_k\mathbb{E}\left[\left\| \nabla f(\mathbf{x}^k)\right\|^2\right].
   	\end{aligned}
\end{equation}
Recalling the condition that  $\eta_{K}\le\tilde{C}_0\alpha_K$, we arrive at
\begin{equation}\label{th4.5}
   	\begin{aligned}
   	\frac{\eta_K}{\tilde{C}_0}\sum_{k=1}^{K}\mathbb{E}\left[\left\| \nabla f(\mathbf{x}^k)\right\|^2\right]
   	\le\alpha_K\sum_{k=1}^{K}\mathbb{E}\left[\left\| \nabla f(\mathbf{x}^k)\right\|^2\right]
   	.
   	\end{aligned}
\end{equation}
Substituting \eqref{th4.3} and \eqref{th4.5} back into \eqref{th4.2}, we have
\begin{equation}\label{th4.4}
   	\begin{aligned}
   	\frac{\eta_K}{\tilde{C}_0}\sum_{k=1}^{K}\mathbb{E}\left[\left\| \nabla f(\mathbf{x}^k)\right\|^2\right]
   	\le \frac{C_1}{C_0}
   	+\frac{C_2}{C_0}\sum_{k=1}^{K}\eta_k{(1-\theta_k)}+\frac{C_3}{C_0} \sum_{k=1}^{K}\eta_{k}^2.
   	\end{aligned}
\end{equation}
Multiplying both sides of \eqref{th4.4} by $\tilde{C}_0/\eta_K$, we arrive at
\begin{equation}\label{th4.6}
   	\begin{aligned}
   	\sum_{k=1}^{K}\mathbb{E}\left[\left\| \nabla f(\mathbf{x}^k)\right\|^2\right]
   	\le\frac{ \frac{{\tilde{C}_0}C_1}{C_0}
   		+\frac{{\tilde{C}_0}C_2}{C_0}\sum_{k=1}^{K}\eta_k{(1-\theta_k)}+\frac{{\tilde{C}_0}C_3}{C_0} \sum_{k=1}^{K}\eta_{k}^2}{\eta_{K}}.
   	\end{aligned}
\end{equation}
   	Upon dividing both sides of \eqref{th4.6} by $K$ yields
   	\begin{equation}\label{th4.7}
   	\begin{aligned}
   	\frac{1}{K}\sum_{k=1}^{K}\mathbb{E}\left[\left\| \nabla f(\mathbf{x}^k)\right\|^2\right]
   	\le\frac{ \frac{{\tilde{C}_0}C_1}{C_0}
   		+\frac{{\tilde{C}_0}C_2}{C_0}\sum_{k=1}^{K}\eta_k{(1-\theta_k)}+\frac{{\tilde{C}_0}C_3}{C_0} \sum_{k=1}^{K}\eta_{k}^2}{K\eta_{K}}.\nonumber
   	\end{aligned}
\end{equation}
Since $\tau$ is uniformly chosen from the set $\left\lbrace 1,2,\cdots,K\right\rbrace$, we finally obtain
\begin{equation}\label{th4.10}
   	\begin{aligned}
   	&\mathbb{E}\left[\left\| \nabla f(\mathbf{x}^{\tau})\right\|^2\right]
   	=\frac{1}{K}\sum_{k=1}^{K}\mathbb{E}\left[\left\| \nabla f(\mathbf{x}^k)\right\|^2\right]
   	\le\frac{ \frac{{\tilde{C}_0}C_1}{C_0}
   	+\frac{{\tilde{C}_0}C_2}{C_0}\sum_{k=1}^{K}\eta_k{(1-\theta_k)}+\frac{{\tilde{C}_0}C_3}{C_0} \sum_{k=1}^{K}\eta_{k}^2}{K\eta_{K}}.\nonumber
   \end{aligned}
\end{equation}
This completes the proof.

\subsection{Lemma for Theorem \ref{th6}}
\begin{lemma}[Lemma 2 \citet{liu2022almost}]\label{Lema.s.}
Let $\left(  X_k\right)_{k\ge 1}$ be  a sequence of non-negative real numbers and $\left(  a_k\right)_{k\ge 1}$  be a decreasing sequence of positive real numbers such that
\begin{gather}
   	\sum _{k=1}^{\infty}a_{k}X_{k}<\infty,
   	\quad 
   	\sum _{k=1}^{\infty}\frac{a_{k}}{\sum _{i=1}^{k-1}a_{i}}= \infty ,\nonumber
\end{gather}
then we have
\begin{gather} 
   	\min_{1 \le i \le k}X_{i}=o\left( \frac{1}{\sum _{i=1}^{k}a_{i}}\right) .\nonumber
\end{gather}
\end{lemma}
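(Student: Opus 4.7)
The plan is to work with the auxiliary sequence $m_k := \min_{1 \le i \le k} X_i$, which is non-increasing and non-negative, and to reduce the claim $S_k m_k \to 0$ (where $S_k := \sum_{i=1}^{k} a_i$) to a clean monotone-sequence argument.

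First I would collect three preliminary facts. Since $X_k \ge m_k$ and $a_k > 0$, the first hypothesis passes through to the minimum sequence: $\sum_{k\ge 1} a_k m_k \le \sum_{k\ge 1} a_k X_k < \infty$. Next, I would argue that $S_k \to \infty$; otherwise $S_\infty := \sum_{k\ge 1} a_k$ is finite, and then $a_k / S_{k-1} \le a_k / a_1$ would give $\sum a_k/S_{k-1} \le S_\infty / a_1 < \infty$, contradicting the second hypothesis. Finally, $m_k \to 0$: as $m_k$ is non-increasing and bounded below by $0$, it converges to some $m^* \ge 0$, and if $m^* > 0$ then $\sum a_k m_k \ge m^* S_k \to \infty$, a contradiction.

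The main step is then to upgrade $m_k \to 0$ to $S_k m_k \to 0$. I would fix an arbitrary $\epsilon > 0$ and, using $\sum a_k m_k < \infty$, choose $N$ large enough that $\sum_{j>N} a_j m_j < \epsilon/2$. For every $k > N$ I would split
\[
S_k m_k \;=\; S_N m_k \;+\; \sum_{j=N+1}^{k} a_j m_k .
\]
Monotonicity $m_k \le m_j$ for $j \le k$ bounds the second piece by $\sum_{j=N+1}^{k} a_j m_j < \epsilon/2$, while the first piece $S_N m_k$ is a fixed constant times $m_k$, which vanishes as $k \to \infty$. Hence $S_k m_k < \epsilon$ for all sufficiently large $k$, which gives $S_k m_k = o(1)$, i.e.\ $\min_{1\le i \le k} X_i = o(1/S_k)$.

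The only subtle point is the elementary but easily overlooked implication $\sum a_k/S_{k-1} = \infty \Rightarrow S_k \to \infty$; the rest is the standard split-into-fixed-head-and-summable-tail technique, which works cleanly here because $(m_k)$ is monotone. I do not anticipate a serious obstacle beyond keeping the two uses of monotonicity (to dominate $X_k$ and to dominate $m_k$ by $m_j$ for $j \le k$) straight.
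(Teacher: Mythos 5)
Your argument is correct, and it is worth noting that the paper itself does not prove this lemma at all: it is imported verbatim as Lemma~2 of \citet{liu2022almost} and used as a black box in the proof of Theorem~\ref{th6}. So the relevant comparison is with that external source rather than with anything in this manuscript. Your proof is a clean, self-contained derivation: the reduction to the monotone sequence $m_k=\min_{1\le i\le k}X_i$, the observation that the divergence hypothesis $\sum_k a_k/S_{k-1}=\infty$ is used only through its (Abel--Dini type) consequence $S_k\to\infty$, and the head/tail split
\begin{equation*}
S_k m_k \;=\; S_N m_k+\sum_{j=N+1}^{k}a_j m_k \;\le\; S_N m_k+\sum_{j=N+1}^{k}a_j m_j
\end{equation*}
are all valid, and each use of monotonicity ($m_k\le X_k$ and $m_k\le m_j$ for $j\le k$) is applied correctly. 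Two small remarks. First, your proof never uses the hypothesis that $(a_k)$ is decreasing, only positivity, so you have in fact established a slightly more general statement than the one quoted; this is a feature, not a bug, but you should say so explicitly rather than leave the unused hypothesis silently in place. Second, the $k=1$ term of $\sum_k a_k/\sum_{i=1}^{k-1}a_i$ has an empty (zero) denominator; this defect is inherited from the lemma as stated, and your contrapositive bound $a_k/S_{k-1}\le a_k/a_1$ should be indexed from $k=2$ to avoid it. Neither point affects the validity of the argument.
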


\subsection{Proof of Theorem \ref{th6}}\label{Proof of th6}

From the the $L$-smoothness of $f$ and Assumption \ref{Assunbia}, it is clear that \eqref{th1.13} in the proof of Theorem \ref{th1} still holds, that is,
\begin{equation}
   	\begin{aligned}
   	\sum_{k=1}^{K}\eta_k\mathbb{E}\left[\left\| \nabla f(\mathbf{x}^k)\right\|^2\right]
   	\le C_1
   	+C_2\sum_{k=1}^{K}\eta_k(1-\theta_k)+C_3 \sum_{k=1}^{K}\eta_{k}^2
   	,\nonumber
   	\end{aligned}
\end{equation}
where $C_1:=\frac{\sqrt{M^2+\epsilon}\left( f(\mathbf{x}^1)-f^*\right) }{1-\beta}$, $C_2:=\frac{\tilde{C}_0 M^4\sqrt{d(M^2+\epsilon)}}{\epsilon^{\frac{3}{2}}C_0(1-\beta)^2}$ and $C_3:=\frac{2\tilde{C}_0^2 M^2L\sqrt{M^2+\epsilon}}{\epsilon C_0^2(1-\beta)^2}$ are all finite values.
By the conditions that $\sum_{k=1}^{\infty}\eta_{k}^2<\infty$ and $\sum_{k=1}^{\infty}\eta_{k}(1-\theta_k)<\infty$, we have
\begin{equation}\label{th2.2}
   	\begin{aligned}
   	\sum_{k=1}^{\infty}\eta_k\mathbb{E}\left[\left\| \nabla f(\mathbf{x}^k)\right\|^2\right]
   	\le C_1
   	+C_2\sum_{k=1}^{\infty}\eta_k(1-\theta_k)+C_3 \sum_{k=1}^{\infty}\eta_{k}^2
   	<\infty.\nonumber
   	\end{aligned}
\end{equation}
Upon applying the Monotone Convergence Theorem \citep[see][chap.~2 Theorem 14]{galambos1995advanced} gives
\begin{equation}
   	\begin{aligned}
   	\mathbb{E}\left[\sum_{k=1}^{\infty}\eta_k\left\| \nabla f(\mathbf{x}^k)\right\|^2\right]
   	=\sum_{k=1}^{\infty}\eta_k\mathbb{E}\left[\left\| \nabla f(\mathbf{x}^k)\right\|^2\right]
   	<\infty.\nonumber
   	\end{aligned}
\end{equation}
Then we have
\begin{equation}\label{th6.1}
   	\begin{aligned}
   	\sum_{k=1}^{\infty}\eta_k\left\| \nabla f(\mathbf{x}^k)\right\|^2<\infty\ a.s.
   	\end{aligned}
\end{equation}
Let $a_k:=\eta_{k}$ and $X_k:=\left\| \nabla f(\mathbf{x}^k)\right\|^2$, then combine the condition that $\sum_{k=1}^{\infty}\big( {\eta_k}/\big( {\sum_{i=1}^{k-1}\eta_i}\big)\big) =\infty$ and \eqref{th6.1}, we have
\begin{equation}
   	\begin{aligned}
   	\sum_{k=1}^{\infty}a_k X_k<\infty\ a.s.,\quad
   	\sum_{k=1}^{\infty}\frac{a_k}{\sum_{i=1}^{k-1}a_i}=\sum_{k=1}^{\infty}\frac{\eta_k}{\sum_{i=1}^{k-1}\eta_i}=\infty.\nonumber
   	\end{aligned}
\end{equation}
Thus, by using Lemma \ref{Lema.s.}, we get
\begin{align}
   	\min_{1\le k\le K}\left\| \nabla f(\mathbf{x}^k)\right\|^2=o\left( \frac{1}{\sum_{k=1}^{K}\eta_{k}}\right) \ a.s. \nonumber
\end{align}
The proof is completed.
   	
\subsection{Proof of Corollary \ref{cor4}}\label{Proof of cor4}

First, we verify that $\eta_{k}$ and $\theta_{k}$ satisfy the conditions in Theorem \ref{th6}. Since $\eta_{k}=1/k^{q}$ for any $1/2<q<1$, we then obtain
\begin{equation}
   	\begin{aligned}
     	\label{cor4.1}&\sum _{i=1}^{k-1}\eta _{i}= \sum _{i=1}^{k-1}\frac{1}{i^{q}}\leq 1 + \int _{1}^{k}\frac{1}{x^{q}}dx= 1 + \frac{k^{1-q}-1}{1-q}=\frac{k^{1-q}-q}{1-q}\leq \frac{k^{1-q}}{1-q}.\nonumber
   	\end{aligned}
\end{equation}
Thus, we have 
\begin{align}
   		\label{en.3}&\sum_{k=1}^{\infty}\frac{\eta_k}{\sum_{i=1}^{k-1}\eta_i}
   		\ge\sum_{k=1}^{\infty}\frac{1}{k^q}\frac{1-q}{k^{1-q}}
   		=\sum_{k=1}^{\infty}\frac{1-q}{k}
   		=\infty,\\
   		\label{en.4}&\sum_{k=1}^{\infty}\eta_{k}^2
   		=\sum_{k=1}^{\infty}\frac{1}{k^{2q}}
   		\overset{(a)}{<}\infty,\\
   		\label{en.5}&\sum_{k=1}^{\infty}\eta_{k}(1-\theta_k)
   		=\sum_{k=1}^{\infty}\frac{1}{k^{p+q}}
   		\overset{(b)}{<}\infty,
\end{align}
where $(a)$ uses $q>1/2$ and $(b)$ uses $p>1-q$,  that is,  $p+q>1$. From \eqref{en.3}, \eqref{en.4} and  \eqref{en.5}, we know that $\eta_{k}$ and $\theta_k$ satisfy the conditions in \eqref{th6.2}. Thus, by using the conclusion of Theorem \ref{th6}, we have
\begin{align}\label{cor4.3}
   	\min_{1\le k\le K}\left\| \nabla f(\mathbf{x}^k)\right\|^2=o\left( \frac{1}{\sum_{k=1}^{K}\eta_{k}}\right)\ a.s. 
\end{align}
Using the relationship between integration and summation yields
\begin{align}\label{cor4.2}
   	&\sum _{k=1}^{K}\eta _{k}
   	= \sum _{k=1}^{K}\frac{1}{k^{q}}\geq \int _{1}^{K}\frac{1}{x^{q}}dx= \frac{K^{1-q}-1}{1-q}.
\end{align}
Substituting \eqref{cor4.2} into \eqref{cor4.3} to get
\begin{equation}
   		\begin{aligned}
   		\min_{1\le k\le K}\left\| \nabla f(\mathbf{x}^k)\right\|^2
   		=o\left( \frac{1}{K^{1-q}}\right)  \ a.s. \nonumber
   		\end{aligned}
\end{equation}
The proof is completed.

\section{Proofs for Section \ref{Non-ergodic Convergence of Adam}}
\subsection{Lemma for Theorem \ref{th2}}
\begin{lemma}[Corollary A.1 \citet{Liu2021ConvergenceAO}]
\label{Lemab} 
Let $\left(  a_k\right)_{k\ge 1} $ and $\left(  b_k\right)_{k\ge 1}$ be the sequences of non-negative real values satisfying $\sum_{k=1}^{\infty}a_k=\infty,\sum_{k=1}^{\infty}a_k b_k^2<\infty$. If there exists a constant $c>0$ such that $|b_{k+1}-b_{k}|\le c a_k$, then we have
\begin{gather} 
   	\lim_{k\rightarrow\infty}b_k=0.\nonumber
\end{gather}
\end{lemma}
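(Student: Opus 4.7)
The plan is to prove $\lim_{k\to\infty}b_k=0$ by contradiction, splitting the argument into two parts: first showing $\liminf_{k\to\infty}b_k=0$, and then showing $\limsup_{k\to\infty}b_k=0$. The Lipschitz-like condition $|b_{k+1}-b_k|\le c a_k$ will be the bridge between oscillations of $b_k$ and accumulations of $a_k$.

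First I would handle the easier half: $\liminf_{k\to\infty}b_k=0$. Suppose not, so that there exist $\eta>0$ and $K_0$ with $b_k\ge \eta$ for all $k\ge K_0$. Then
\[
\sum_{k=K_0}^{\infty} a_k b_k^2 \;\ge\; \eta^2 \sum_{k=K_0}^{\infty} a_k \;=\; \infty,
\]
contradicting $\sum_{k=1}^{\infty}a_k b_k^2 <\infty$. So $\liminf_{k\to\infty}b_k = 0$.

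The main obstacle is ruling out $\limsup_{k\to\infty}b_k>0$. Suppose for contradiction that $\limsup_{k\to\infty}b_k = 2\delta>0$. Combined with $\liminf_{k\to\infty}b_k=0$, the sequence must oscillate, so one can inductively extract a strictly increasing sequence of indices $j_1<k_1<j_2<k_2<\cdots$ such that $b_{j_n}\le \delta/2$, $b_{k_n}\ge 3\delta/2$, and $b_k\ge \delta/2$ for every $k$ with $j_n\le k\le k_n$ (taking $j_n$ as the largest index before $k_n$ where $b_{j_n}\le \delta/2$, then choosing $k_{n+1}$ as the next index above $k_n$ where $b_k\ge 3\delta/2$). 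On each interval, the telescoping estimate combined with the Lipschitz property gives
\[
\delta \;\le\; b_{k_n}-b_{j_n} \;\le\; \sum_{k=j_n}^{k_n-1}|b_{k+1}-b_k| \;\le\; c\sum_{k=j_n}^{k_n-1}a_k,
\]
so $\sum_{k=j_n}^{k_n-1}a_k \ge \delta/c$. Since $b_k\ge \delta/2$ throughout $[j_n,k_n]$, we get
\[
\sum_{k=j_n}^{k_n-1} a_k b_k^2 \;\ge\; \frac{\delta^2}{4}\cdot\frac{\delta}{c} \;=\; \frac{\delta^3}{4c}.
\]

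Finally, since the intervals $[j_n,k_n-1]$ are pairwise disjoint, summing over $n$ yields $\sum_{k=1}^{\infty}a_k b_k^2 = \infty$, contradicting the hypothesis. Hence $\limsup_{k\to\infty}b_k=0$, and together with the first part this proves $\lim_{k\to\infty}b_k=0$. The delicate step is the construction of the intervals and verifying that $b_k$ genuinely stays above $\delta/2$ on all of $[j_n,k_n]$; this needs care because the definition of $j_n$ as the last index before $k_n$ with $b_{j_n}\le \delta/2$ is what ensures the lower bound holds throughout the interval, and disjointness follows by then choosing $k_{n+1}>k_n$ as the next crossing above $3\delta/2$.
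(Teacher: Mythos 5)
The paper gives no proof of this lemma at all --- it is imported verbatim as Corollary A.1 of \citet{Liu2021ConvergenceAO} --- so there is no in-paper argument to compare against; I evaluate your proposal on its own. Your strategy (establish $\liminf_{k}b_k=0$ directly, then rule out $\limsup_{k}b_k>0$ via a crossing argument) is the standard one, and the first half is correct. The second half, however, has a genuine gap at the left endpoint of your intervals. By your own definition, $j_n$ is the largest index before $k_n$ with $b_{j_n}\le\delta/2$, so the assertion ``$b_k\ge\delta/2$ for every $k$ with $j_n\le k\le k_n$'' is inconsistent at $k=j_n$ unless $b_{j_n}$ happens to equal $\delta/2$ exactly; in general you only obtain $b_k>\delta/2$ for $j_n<k\le k_n$. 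Consequently the per-interval estimate
\[
\sum_{k=j_n}^{k_n-1}a_kb_k^2\ \ge\ \frac{\delta^2}{4}\sum_{k=j_n}^{k_n-1}a_k\ \ge\ \frac{\delta^3}{4c}
\]
does not follow, because the left-hand sum includes the term $a_{j_n}b_{j_n}^2$, where $b_{j_n}$ may be arbitrarily small while $a_{j_n}$ carries essentially all of the mass $\delta/c$ guaranteed by the telescoping bound. Concretely, if $b_{j_n}=0$ and $a_{j_n}$ is large enough that the Lipschitz condition permits $b_{j_n+1}\ge 3\delta/2$, then $k_n=j_n+1$ and the interval contributes $a_{j_n}b_{j_n}^2=0$ rather than $\delta^3/(4c)$.

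The standard repair is to telescope along the \emph{descent} rather than the ascent: after each $k_n$ with $b_{k_n}\ge 3\delta/2$, let $l_n>k_n$ be the first index with $b_{l_n}<\delta/2$ (it exists because $\liminf_k b_k=0$). Then $b_k\ge\delta/2$ for all $k_n\le k\le l_n-1$, and $b_{k_n}-b_{l_n}\ge\delta$ yields $\sum_{k=k_n}^{l_n-1}a_k\ge\delta/c$, so every index appearing in $\sum_{k=k_n}^{l_n-1}a_kb_k^2$ genuinely satisfies $b_k\ge\delta/2$ and the lower bound $\delta^3/(4c)$ holds; choosing $k_{n+1}>l_n$ then makes disjointness of the intervals automatic. (Your prescription of taking $k_{n+1}$ as the next index above $k_n$ with $b\ge 3\delta/2$ does not by itself guarantee disjointness, since $b$ need not dip below $\delta/2$ between $k_n$ and that next exceedance, in which case $j_{n+1}=j_n$.) A final cosmetic point: $\limsup_k b_k$ could be $+\infty$, so one should fix any $\delta>0$ with $3\delta/2<\limsup_k b_k$ rather than writing $\limsup_k b_k=2\delta$.
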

   
\subsection{Proof of Theorem \ref{th2}}\label{Proof of th2}

First, we prove the conclusion that $\lim_{k\rightarrow\infty} \left\| \nabla f(\mathbf{x}^k)\right\|=0\ a.s.$ Since $f$ is $L$-smooth and Assumption \ref{Assunbia} is satisfied, then the equation \eqref{th6.1} in the proof of Theorem \ref{th6} still holds, that is,
\begin{equation}
   	\begin{aligned}
   	\sum_{k=1}^{\infty}\eta_k\left\| \nabla f(\mathbf{x}^k)\right\|^2<\infty\ a.s.\nonumber
   	\end{aligned}
\end{equation}
Using the $L$-smoothness of $f$ again, we arrive at
\begin{equation}
   	\begin{aligned}
   	\left| \left\| \nabla f(\mathbf{x}^{k+1})\right\|-\left\| \nabla f(\mathbf{x}^{k})\right\| \right| 
   	\le&\left\| \nabla f(\mathbf{x}^{k+1})-\nabla f(\mathbf{x}^{k})\right\|
   	\overset{(a)}{\le} L\left\| \mathbf{x}^{k+1}-\mathbf{x}^{k}\right\| \\
   	=& L\eta_{k}\left\|  \frac{\mathbf{m}^k}{\sqrt{\mathbf{v}^k+\epsilon}} \right\|   
   	\overset{(b)}{\le} \frac{LM}{\sqrt{\epsilon}}\eta_{k}\ a.s.,\nonumber
   	\end{aligned}
\end{equation}
where $(a)$ uses Assumption \ref{Assmin}, $(b)$ holds by Lemma \ref{Lemmbound}, that is, $\| {\mathbf{m}^k}/{\sqrt{\mathbf{v}^k+\epsilon}}\| \le{M}/{\sqrt{\epsilon}}\ a.s.$ 
Next, we denote that
\begin{align*}
   	a_k=\eta_{k},\quad b_k= \left\| \nabla f(\mathbf{x}^{k})\right\|,\quad c=\frac{LM}{\sqrt{\epsilon}}.
\end{align*}
Thus, we can get 
\begin{align*}
   	&\sum_{k=1}^{\infty}a_k
   	=\sum_{k=1}^{\infty}\eta_{k}
   	=\infty, 
   	\quad 
   	\sum_{k=1}^{\infty}a_k b_k^2
   	=\sum_{k=1}^{\infty}\eta_k\left\| \nabla f(\mathbf{x}^k)\right\|^2<\infty\ a.s.,\\
   	&\left| b_{k+1}-b_{k}\right| 
   	=\left|\left\| \nabla f(\mathbf{x}^{k+1})\right\|-\left\| \nabla f(\mathbf{x}^{k})\right\| \right| 
   	\le \frac{LM}{\sqrt{\epsilon}}\eta_{k}
   	=c a_{k}
   	\ a.s.,
\end{align*}
which matches the condition of Lemma \ref{Lemab}. Then from the conclusion of Lemma \ref{Lemab}, we have
\begin{equation}
   	\begin{aligned}
   	\lim_{k\rightarrow\infty} \left\| \nabla f(\mathbf{x}^k)\right\|=0\ a.s.\nonumber
   	\end{aligned}
\end{equation}
Further, since we obtain $\left\| \nabla f(\mathbf{x}^k)\right\|\le M\ a.s.$ in Lemma \ref{Lemmbound}, then the Dominated Convergence Theorem \citep[see][chap.~2 Theorem 16]{galambos1995advanced} gives
\begin{equation}
   	\begin{aligned}
   	\lim_{k\rightarrow\infty} \mathbb{E}\left[\left\| \nabla f(\mathbf{x}^k)\right\|\right]=0.\nonumber
   	\end{aligned}
\end{equation}
The proof is completed.

\subsection{Proof of Theorem \ref{th5}}\label{Proof of th5}
Because the conditions in Theorem \ref{th2} still hold, we begin with the conclusion of Theorem \ref{th2},  that is, 
\begin{equation}
   	\begin{aligned}
   	\lim_{k\rightarrow\infty} \left\| \nabla f(\mathbf{x}^k)\right\|=0\ a.s.,\ 
   	\text{and}\ \lim_{k\rightarrow\infty} \mathbb{E}\left[\left\| \nabla f(\mathbf{x}^k)\right\|\right]=0.\nonumber
   	\end{aligned}
\end{equation}
Since $f$ satisfies the PL condition,  that is,  for $\forall\mathbf{x}\in{\mathbb{R}^d}$, there exists $v>0$, such that $\left\| \nabla f(\mathbf{x})\right\|  ^2\ge2v(f(\mathbf{x})-f^*)$, we then obtain
\begin{align}\label{th5.1}
   	\left\| \nabla f(\mathbf{x}^k)\right\|^2
   	\ge 2v(f(\mathbf{x}^k)-f^*)
   	\ge 0.
\end{align}
Taking the limit on both sides of \eqref{th5.1} with respect to $k$ and using the Squeeze theorem, we can get
\begin{equation}
   	\begin{aligned}
   	\lim_{k\rightarrow\infty} f(\mathbf{x}^k)=f^*\ a.s.\nonumber
   	\end{aligned}
\end{equation}
On the other hand, taking the total expectation on \eqref{th5.1} gives
\begin{align}\label{en.6}
   	\mathbb{E}\left[ \left\| \nabla f(\mathbf{x}^k)\right\|^2\right] 
   	\ge 2v\left( \mathbb{E}\left[ f(\mathbf{x}^k)\right] -f^*\right) 
   	\ge 0.
\end{align}
Upon taking the limit on both sides of \eqref{en.6} and combining it with the Squeeze theorem, we have
\begin{equation}
   	\begin{aligned}
   	\lim_{k\rightarrow\infty} \mathbb{E}\left[f(\mathbf{x}^k)\right]=f^*.\nonumber
   	\end{aligned}
\end{equation}
This completes the proof.

\subsection{Lemma for Theorem \ref{th3}}
\begin{lemma}\label{Lemre}
Let $\gamma_k\in\left( 0,1 \right]$ for $k=1,2,\cdots$ and the sequence $\left(  \Delta_k\right)_{k\ge 1}$ satisfy
\begin{equation}\label{en.2}
   	\begin{aligned}
   	\Delta_{k+1}\le (1-\gamma_k)\Delta_{k}+B_k,\ k\ge 1,
   	\end{aligned}
\end{equation}
then for any $K\ge 1$, we have 
\begin{equation}\label{Lemre.1}
   	\begin{aligned}
   	\Delta_{K+1}\le \Gamma_K(1-\gamma_1)\Delta_1+\Gamma_K\sum_{k=1}^{K}\frac{B_k}{\Gamma_k},
   	\end{aligned}
\end{equation}
where
\begin{equation}
   	\begin{aligned}
   	\Gamma_k:=
   	\begin{cases}
   	1,& \text{$k=1$}\\
   	(1-\gamma_k)\Gamma_{k-1},& \text{$k\ge2$}
   	\end{cases}.\nonumber
   	\end{aligned}
\end{equation}
\end{lemma}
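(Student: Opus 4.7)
The plan is to prove Lemma \ref{Lemre} by induction on $K$, since the claim is essentially the explicit unrolling of the one-step recursion \eqref{en.2}. The definition of $\Gamma_{k}$ is precisely designed so that the telescoped product of the contraction factors $(1-\gamma_{k})$ matches the weights appearing in the claimed bound, and the key algebraic identity that drives the induction is $(1-\gamma_{K+1})\Gamma_{K}=\Gamma_{K+1}$, which is immediate from the recursive definition of $\Gamma_{k}$.

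For the base case $K=1$, applying \eqref{en.2} with $k=1$ gives $\Delta_{2}\le (1-\gamma_{1})\Delta_{1}+B_{1}$, and the right-hand side of \eqref{Lemre.1} with $K=1$ equals $\Gamma_{1}(1-\gamma_{1})\Delta_{1}+\Gamma_{1}\cdot B_{1}/\Gamma_{1}=(1-\gamma_{1})\Delta_{1}+B_{1}$, since $\Gamma_{1}=1$. For the inductive step, I would assume \eqref{Lemre.1} holds for some $K\ge 1$, then apply \eqref{en.2} at index $k=K+1$ to obtain
\begin{align*}
\Delta_{K+2}
&\le (1-\gamma_{K+1})\Delta_{K+1}+B_{K+1}\\
&\le (1-\gamma_{K+1})\left[\Gamma_{K}(1-\gamma_{1})\Delta_{1}+\Gamma_{K}\sum_{k=1}^{K}\frac{B_{k}}{\Gamma_{k}}\right]+B_{K+1}.
\end{align*}
Using $(1-\gamma_{K+1})\Gamma_{K}=\Gamma_{K+1}$ on both terms in the bracket and then absorbing $B_{K+1}$ into the sum as $\Gamma_{K+1}\cdot B_{K+1}/\Gamma_{K+1}$, the right-hand side collapses to $\Gamma_{K+1}(1-\gamma_{1})\Delta_{1}+\Gamma_{K+1}\sum_{k=1}^{K+1}B_{k}/\Gamma_{k}$, closing the induction.

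The only point requiring care is monotonicity when multiplying the inductive hypothesis by $(1-\gamma_{K+1})$: this factor must be non-negative so that the inequality is preserved. This is guaranteed by the hypothesis $\gamma_{k}\in(0,1]$, which gives $1-\gamma_{K+1}\ge 0$. There is no serious obstacle beyond bookkeeping; the lemma is a purely algebraic unrolling, and the statement's specific form — with $(1-\gamma_{1})\Delta_{1}$ pulled out rather than $\Delta_{1}$ divided by $\Gamma_{1}$ — is already compatible with $\Gamma_{1}=1$, so no rescaling is required. The resulting identity will then be applied downstream in the proof of Theorem \ref{th3} with $\Delta_{k}=\mathbb{E}[f(\mathbf{x}^{k})]-f^{*}$ and a suitable choice of $\gamma_{k}$ and $B_{k}$ derived from the $L$-smoothness inequality combined with the PL condition.
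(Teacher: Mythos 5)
Your induction argument is correct and is essentially the same argument as the paper's proof, which unrolls the recursion directly and then rewrites the resulting products $\prod_{i=k+1}^{K}(1-\gamma_i)$ as $\Gamma_K/\Gamma_k$; your version merely formalizes that unrolling via the identity $(1-\gamma_{K+1})\Gamma_K=\Gamma_{K+1}$ and correctly notes the non-negativity of $1-\gamma_{K+1}$ needed to preserve the inequality. Both proofs implicitly require $\Gamma_k\neq 0$ for the division by $\Gamma_k$ to make sense (i.e., $\gamma_k<1$ for $k\ge 2$), which holds in the downstream application with $\gamma_k=2/(k+1)$.
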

\begin{proof}
It follows from the condition in \eqref{en.2} that
\begin{equation}
   	\begin{aligned}
   	\Delta_{K+1}
   	\le& (1-\gamma_K)\Delta_{K}+B_K
   	\\
   	\le&(1-\gamma_K)(1-\gamma_{K-1})\Delta_{K-1}+(1-\gamma_K){B_{K-1}}+{B_{K}}\\
   	\le&\cdots\le\prod_{k=1}^{K}(1-\gamma_k)\Delta_{1}+\sum_{k=1}^{K}\left( \prod_{i=k+1}^{K}(1-\gamma_i)\right) B_{k}
   	\\
   	{=}&\prod_{k=2}^{K}(1-\gamma_k)(1-\gamma_1)\Delta_{1}
   	+\sum_{k=1}^{K}\left( \prod_{i=2}^{K}(1-\gamma_i)\bigg/\prod_{i=2}^{k}(1-\gamma_i)\right) B_{k}\\
   	=&\prod_{k=2}^{K}(1-\gamma_k)(1-\gamma_1)\Delta_{1}
   	+\prod_{i=2}^{K}(1-\gamma_i)\sum_{k=1}^{K}
   	\left(B_{k}\bigg/\prod_{i=2}^{k}(1-\gamma_i)\right) \\
   	=&\Gamma_K (1-\gamma_{1})\Delta_{1}+\Gamma_K\sum_{k=1}^{K}\frac{B_k}{\Gamma_k}.\nonumber
   	\end{aligned}
\end{equation}
The proof is completed.
\end{proof}

\begin{remark}\label{RMKLemre}
It should be noted that Lemma \ref{Lemre} plays an important role in establishing the proof of Theorem \ref{th3}. Lemma \ref{Lemre} is inspired by \citet[Lemma 1]{ghadimi2016accelerated}, with difference recursion about $\gamma_{k}$, $\Delta_{k}$ and $B_{k}$ in \eqref{en.2}. In particular, by choosing $\gamma_k={2}/{(k+1)}$, we can obtain that $\Gamma_k={2}/{(k(k+1))}$ and $\Gamma_k(1-\gamma_{1})\Delta_1=0$. Thus, the conclusion of Lemma \ref{Lemre} becomes $\Delta_{K+1}
\le\Gamma_K\sum_{k=1}^{K}
({B_k}/{\Gamma_k})=\big(\sum_{k=1}^{K}k(k+1){B_k}\big)/(K(K+1))$.
\end{remark}

\subsection{Proof of Theorem \ref{th3}}\label{Proof of th3}
From Assumptions \ref{Assunbia} and \ref{Assmin}, and the step size condition, the equation \eqref{th1.14} in the proof of Theorem \ref{th1} still holds. Then combining \eqref{th1.14} with the PL condition gives
\begin{equation}\label{7.1}
   	\begin{aligned}
   	\mathbb{E}\left[ f(\mathbf{x}^{k+1})\right] 
   	&{\le}
   	\mathbb{E}\left[ f(\mathbf{x}^k)\right] 
   	-\frac{1-\beta}{\sqrt{M^2+\epsilon}}\eta_k\mathbb{E}\left[\left\| \nabla f(\mathbf{x}^k)\right\|^2\right]\\
   	&{+}\frac{\beta L M^2}{\epsilon}\eta_k\sum_{j=1}^{k}\beta^{k-j} \eta_{j-1}
   	+\frac{\sqrt{d} M^4}{\epsilon^{\frac{3}{2}}}\eta_k\sum_{j=1}^{k}\beta^{k-j}(1-\theta_j)
   	+\frac{M^2L}{2\epsilon}\eta_k^2 \\
   	&{\overset{\text{PL}}{\le}}
   	\mathbb{E}\left[ f(\mathbf{x}^k)\right] 
   	-\frac{2(1-\beta)v}{\sqrt{M^2+\epsilon}}\eta_k\left( \mathbb{E}\left[ f(\mathbf{x}^k)\right] -f^*\right) \\
   	&{+}\frac{\beta LM^2}{\epsilon}
   	\eta_k\sum_{j=1}^{k}\beta^{k-j} \eta_{j-1}
   	+\frac{\sqrt{d} M^4}{\epsilon^{\frac{3}{2}}}\eta_k\sum_{j=1}^{k}\beta^{k-j}(1-\theta_j)
   	+\frac{M^2L}{2\epsilon}\eta_k^2.
   	\end{aligned}
\end{equation}
Subtracting $f^*$ from both sides of \eqref{7.1} and substituting $\eta_{k}=\frac{\sqrt{M^2+\epsilon}}{(1-\beta)v}\cdot\frac{1}{k+1}$ into the above equation, we obtain
\begin{equation}\label{th3.1}
   	\begin{aligned}
   	\mathbb{E}\left[ f(\mathbf{x}^{k+1})\right] -f^*
   	\le&\left( 1-\frac{2}{k+1}\right) \left( \mathbb{E}\left[ f(\mathbf{x}^k)\right] -f^*\right) 
   	+\underbrace{\frac{\beta LM^2(M^2+\epsilon)}{(1-\beta)^2 v^2\epsilon}}_{C_4}\cdot{\frac{1}{k+1}\sum_{j=1}^{k}\beta^{k-j} \frac{1}{j}}\\
   	&+\underbrace{\frac{M^4\sqrt{d\left( M^2+\epsilon\right)}  }{(1-\beta)v\epsilon^{\frac{3}{2}}}}_{C_5}\cdot{\frac{1}{k+1}\sum_{j=1}^{k}\beta^{k-j}(1-\theta_j)}
   	+\underbrace{{\frac{LM^2(M^2+\epsilon)}{2(1-\beta)^2 v^2\epsilon}}}_{C_6}\cdot{\frac{1}{(k+1)^2}}
   	.
   	\end{aligned}
\end{equation}
Now, let's set
\begin{align*}
   	&\Delta_k:=\mathbb{E}\left[ f(\mathbf{x}^k)\right] -f^*,
   	\quad 
   	\gamma_k:=\frac{2}{k+1},\\
   	&B_k:=C_4\cdot {\frac{1}{k+1}\sum_{j=1}^{k}\beta^{k-j} \frac{1}{j}}
   	+C_5\cdot {\frac{1}{k+1}\sum_{j=1}^{k}\beta^{k-j}(1-\theta_j)}
   	+C_6\cdot {\frac{1}{(k+1)^2}} ,\ k\ge 1.
\end{align*}
Then the equation \eqref{th3.1} becomes
\begin{align*}
   	\Delta_{k+1}\le (1-\gamma_k)\Delta_{k}+B_k,\ k\ge 1,
\end{align*}
which satisfies the condition of Lemma \ref{Lemre}, and the sequence $\gamma_k$ meets the condition discussed in Remark \ref{RMKLemre},  that is,  if $\gamma_k={2}/{(k+1)}$, then  $\Delta_{K+1}\le(\sum_{k=1}^{K}k(k+1){B_k})/(K(K+1))$. So,   applying Lemma \ref{Lemre} and Remark \ref{RMKLemre} in conjunction with \eqref{th3.1} yields
\begin{equation}\label{th3.5}
   	\begin{aligned}
   	&\mathbb{E}\left[ f(\mathbf{x}^{K+1})\right] -f^*
   	\le \frac{1}{K(K+1)}\sum_{k=1}^{K}k(k+1)B_k\\
   	&=\frac{1}{K(K+1)}  \left( C_4 \sum_{k=1}^{K}k(k+1){\frac{1}{k+1}\sum_{j=1}^{k}\beta^{k-j} \frac{1}{j}}
   	\right.\\
   	&\left.+C_5\sum_{k=1}^{K}k(k+1){\frac{1}{k+1}\sum_{j=1}^{k}\beta^{k-j}(1-\theta_j)}
   	+C_6\sum_{k=1}^{K}k(k+1){\frac{1}{(k+1)^2}}\right) \\
   	&=\frac{1}{K(K+1)}  \left( C_4 \sum_{k=1}^{K}k{\sum_{j=1}^{k}\beta^{k-j} \frac{1}{j}}
   	+C_5\sum_{k=1}^{K}k{\sum_{j=1}^{k}\beta^{k-j}(1-\theta_j)}
   	+C_6\sum_{k=1}^{K}{\frac{k}{k+1}}\right) .
   	\end{aligned}
\end{equation}
Further, we estimate the three terms in the right-hand side of \eqref{th3.5} as follows.
\begin{align}\label{th3.2}
   	&\sum_{k=1}^{K}k\sum_{j=1}^{k}\beta^{k-j}\frac{1}{j}
   	\overset{(b)}{\le}\frac{1}{(1-\beta)^2}\sum_{k=1}^{K}k\cdot\frac{1}{k}
   	=\frac{1}{(1-\beta)^2} K,\\
   	\label{th3.4}
   	&\sum_{k=1}^{K}k{\sum_{j=1}^{k}\beta^{k-j}(1-\theta_j)}
   	\overset{(c)}{\le}\frac{1}{(1-\beta)^2}\sum_{k=1}^{K}k(1-\theta_k)
   	,\\
   	\label{th3.3}
   	&\sum_{k=1}^{K}\frac{k}{k+1}
   	{\le}\sum_{k=1}^{K} 1
   	=K,
\end{align}
where inequalities $(b)$ and $(c)$ uses  \eqref{Lemsum.2} in Lemma \ref{Lemsum}, that is,  $\sum_{k=1}^{K}k\sum_{j=1}^{k}\beta^{k-j}b_j\le \big(1/(1-\beta)^{2}\big)\sum_{k=1}^{K}k b_k$ with $b_j:=1/j$ in $(b)$, $b_j:=(1-\theta_j)$ in $(c)$. Upon substituting \eqref{th3.2}, \eqref{th3.3} and \eqref{th3.4} into \eqref{th3.5}, we have
\begin{equation}
   	\begin{aligned}
   	\mathbb{E}\left[ f(\mathbf{x}^{K+1})\right] -f^*
   	\le&\frac{1}{K(K+1)} \left( \frac{C_4}{(1-\beta)^2} K+\frac{C_5}{(1-\beta)^2}\sum_{k=1}^{K}k(1-\theta_k)+C_6  K\right) \\
   	=&\left( \frac{C_4}{(1-\beta)^2}+C_6\right) \cdot\frac{ 1 }{K+1} +\frac{C_5}{(1-\beta)^2}\cdot\frac{\sum_{k=1}^{K}k(1-\theta_k)}{K(K+1)}.\nonumber
   	\end{aligned}
\end{equation}
The proof is completed.

\section{Proof of Proposotion \ref{prop-non}}\label{A}

\begin{proof}
By the condition that $\lim_{n\to\infty}x_{n}=x$, we can obtain that $\lim_{n\to\infty}x_{n}-x=0$, which implies that there exists a constant $G>0$ such that $\lvert x_n-x\rvert< G$ holds for $\forall\,n\in \mathbb{N}_{+}$ and also suggest that for $\forall\,\epsilon>0$, $\exists\, N_1\in \mathbb{N}_{+}$, such that $\lvert x_n-x\rvert<\epsilon/2$ holds for $\forall\,n>N_1$. Upon since $\lim_{n\rightarrow\infty} \omega_{n,k}=0$ for any $1\leq k\leq n$, we can derive that $\exists\, N_2\in \mathbb{N}_{+}$, such that $|\omega_{n,k}|<\epsilon/(2GN_1)$ holds for $\forall\,n>N_2$ and $\forall\,k\le N_1$. Let $N=\max\left\lbrace N_1,N_2\right\rbrace $, then for $\forall\,n>N$, we have
\begin{equation}
\begin{aligned}\label{barx}
	\lvert \bar{x}_n-x\rvert 
	&=\left\lvert \sum_{k=1}^{n}\omega_{n,k}x_k
	-x\right\rvert 
	\overset{(a)}{=}
	\left\lvert \sum_{k=1}^{n}\omega_{n,k}
	(x_k-x)\right\rvert
	\\
	&\le
	\left\lvert 
	\sum_{k=1}^{N_1}\omega_{n,k}(x_k-x)
	\right\rvert
	+\left\lvert \sum_{k=N_{1}+1}^{n}\omega_{n,k}(x_k-x)
	\right\rvert
	\\
	&\le \sum_{k=1}^{N_1}\omega_{n,k}
	\lvert x_k-x\rvert
	+ \sum_{k=N_{1}+1}^{n}\omega_{n,k}
	\lvert x_k-x\rvert
	\\
	&<G\sum_{k=1}^{N_1}\omega_{n,k}
	+\frac{\epsilon}{2}\sum_{k=N_{1}+1}^{n}\omega_{n,k}
	\\
	&< GN_1
	\,\frac{\epsilon}{2GN_1}
	+\frac{\epsilon}{2}\sum_{k=1}^{n}\omega_{n,k}
	\overset{(b)}{=}\frac{\epsilon}{2}+\frac{\epsilon}{2}=\epsilon,
\end{aligned}
\end{equation}
where $(a)$ and $(b)$ is due to $\sum_{k=1}^{n}\omega_{n,k}=1$. Thus, we can conclude from \eqref{barx} that for $\forall\,\epsilon>0$, there exists $N\in\mathbb{N}_{+}$ such that $\lvert \bar{x}_{n}-x\rvert<\epsilon$, which implies that $\lim_{n\to\infty}\bar{x}_{n}=x$. This completes the proof.
\end{proof}

\section{The Relationship between \textbf{SC-Zou} and \textbf{SC-Adam}}\label{B}
In this section, we present the relationship between two sufficient conditions for the convergence of Adam. For clarity, we first list \textbf{SC-Zou} proposed by \citet{zou2019sufficient} and our \textbf{SC-Adam} as follows, then discuss their relationship in Proposition \ref{contopro}.\\[1.5mm]
$1.$ \textbf{SC-Zou}, the sufficient condition in \citet{zou2019sufficient}:
\begin{enumerate}
\item[\textcircled{\scriptsize z1}] $0\le\beta_k\le\beta<1$;
\item[\textcircled{\scriptsize z2}]$0<\theta_k<1$ and $\theta_k$ is monotonically non-decreasing;
\item[\textcircled{\scriptsize z3}] $ \alpha_k\le\eta_k/\sqrt{1-\theta_k}\le C\alpha_k$, where $(\alpha_k)_{k\ge 1}$ is a non-increasing real sequence  and $C>0$;
\item[\textcircled{\scriptsize z4}] $\left( \sum_{k=1}^{K}\eta_k\sqrt{1-\theta_k}\,
\right)\big/\left( K\eta_{K}\right) =o(1)$. 
\end{enumerate}
$2.$ \textbf{SC-Adam}, the sufficient condition in Corollary \ref{suffcor}:
\begin{enumerate}
\item[\textcircled{\small{1}}] $0\le\beta_k\le\beta<1$;
\item[\textcircled{\small{2}}] $0<\theta_k<1$; 
\item[\textcircled{\small{3}}] $C_0\alpha_k\le \eta_{k}\le \tilde{C}_0 \alpha_k$, where $(\alpha_k)_{k\ge 1}$ is a non-increasing real sequence and $C_0$, $\tilde{C}_0>0$;  
\item[\textcircled{\small{4}}] $\left( \sum_{k=1}^{K}\eta_k{(1-\theta_k)}\right) \big/\left( K\eta_{K}\right) =o(1)$;
\item[\textcircled{\small{5}}] $\left( \sum_{k=1}^{K}\eta_k^2\right) \big/\left( K\eta_{K}\right) =o(1)$. 
\end{enumerate}

\begin{proposition}\label{contopro}
The sufficient condition (\textbf{SC-Adam}) obtained in Corollary \ref{suffcor} is weaker than \textbf{SC-Zou} given by \citet{zou2019sufficient}.
\end{proposition}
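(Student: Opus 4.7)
The plan is to establish Proposition \ref{contopro} in two stages. First, I would verify that every hyperparameter schedule satisfying \textbf{SC-Zou} also satisfies \textbf{SC-Adam}, showing that \textbf{SC-Adam} is no more restrictive. Second, I would exhibit a concrete schedule satisfying \textbf{SC-Adam} but violating \textbf{SC-Zou}, so the inclusion is strict.

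For the first stage, conditions z1 and 1 are identical, and z2 implies 2 by dropping monotonicity. To derive condition 3 from z3, I would set $\tilde{\alpha}_k := \alpha_k \sqrt{1-\theta_k}$: both factors are non-increasing ($\alpha_k$ by z3, $\sqrt{1-\theta_k}$ by the monotonicity of $\theta_k$ in z2), hence the product $\tilde{\alpha}_k$ is non-increasing as a product of non-increasing positive sequences. Then z3 rewrites as $\tilde{\alpha}_k \le \eta_k \le C \tilde{\alpha}_k$, matching condition 3 with $C_0=1$ and $\tilde{C}_0=C$. For condition 4, I would invoke the elementary inequality $1-\theta_k < \sqrt{1-\theta_k}$ valid for $\theta_k \in (0,1)$, so that $\sum_{k=1}^{K} \eta_k (1-\theta_k) \le \sum_{k=1}^{K} \eta_k \sqrt{1-\theta_k}$, and the right-hand side divided by $K \eta_K$ is $o(1)$ by z4. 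For condition 5, I would combine z3 (which gives $\eta_k \le C \alpha_k \sqrt{1-\theta_k} \le C \alpha_1 \sqrt{1-\theta_k}$ by the non-increasing of $\alpha_k$) with z4 to obtain
\[
\frac{\sum_{k=1}^{K}\eta_k^2}{K \eta_K} \le C \alpha_1 \, \frac{\sum_{k=1}^{K} \eta_k \sqrt{1-\theta_k}}{K \eta_K} = o(1).
\]

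For the second stage, I would reuse the example already flagged in the discussion after Corollary \ref{suffcor}: $\theta_k = 1 - 1/k$ and $\eta_k = 1/k^q$ with $0 < q < 1/2$. Condition 3 is immediate with $\alpha_k = 1/k^q$; conditions 4 and 5 follow from the power-law estimates $\sum_{k=1}^{K} \eta_k (1-\theta_k) = \sum_{k=1}^{K} 1/k^{q+1} = O(1)$ and $\sum_{k=1}^{K} \eta_k^2 = O(K^{1-2q})$, both dominated by $K \eta_K = K^{1-q}$ since $q < 1/2$. However, z3 would force $\eta_k / \sqrt{1-\theta_k} = k^{1/2 - q}$ to be bounded above by $C \alpha_k \le C \alpha_1$, a constant, contradicting $k^{1/2-q} \to \infty$ for $q < 1/2$. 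Hence this schedule satisfies \textbf{SC-Adam} but not \textbf{SC-Zou}, which proves strictness.

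The hard part will be the derivation of condition 5 from \textbf{SC-Zou}, since z4 only controls $\eta_k \sqrt{1-\theta_k}$ rather than $\eta_k^2$; the bridge is provided by z3, which lets one absorb $\alpha_k$ into the constant $\alpha_1$ via its non-increasing property. The remaining verifications amount to bookkeeping on monotonicity and standard power-law sums, and the separation example is almost immediate once the algebra of z3 is unpacked.
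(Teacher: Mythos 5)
Your proposal is correct and the first stage follows essentially the same route as the paper's proof: matching z1 with condition 1, dropping monotonicity to get condition 2, taking $\tilde{\alpha}_k=\alpha_k\sqrt{1-\theta_k}$ with $C_0=1$, $\tilde{C}_0=C$ for condition 3, using $1-\theta_k\le\sqrt{1-\theta_k}$ for condition 4, and absorbing $\alpha_k\le\alpha_1$ into the constant for condition 5. Your second stage (the separating example $\theta_k=1-1/k$, $\eta_k=1/k^q$ with $0<q<1/2$) is also correct and actually makes the strictness claim formal, whereas the paper only states this example informally in the discussion following Corollary \ref{suffcor} and restricts the formal proof to the implication.
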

\begin{proof}
First, it is clear that \textcircled{\scriptsize {z1}} in \textbf{SC-Zou} is the same as \text{\textcircled{\small{1}}} in \textbf{SC-Adam}. Meanwhile, since  \text{\textcircled{\small{2}}} in \textbf{SC-Adam} does not require the non-decreasing of $\theta_{k}$, then it is much weaker than \textcircled{\scriptsize {z2}} in \textbf{SC-Zou}. 
\par Next, we discuss the relationship between \text{\textcircled{\small{3}}}, \text{\textcircled{\small{4}}}, \text{\textcircled{\small{5}}} in \textbf{SC-Adam} and \textcircled{\scriptsize {z2}}, \textcircled{\scriptsize {z3}}, \textcircled{\scriptsize {z4}} in \textbf{SC-Zou} as follows.
\begin{enumerate}
\item[$(1)$] The conditions \textcircled{\scriptsize {z2}} and \textcircled{\scriptsize {z3}} in \textbf{SC-Zou} imply \text{\textcircled{\small{3}}} in \textbf{SC-Adam}.
\par Since there exists a non-increasing sequence $\alpha_k$ and a positive number $C$ such that
$\alpha_k\le\eta_k/\sqrt{1-\theta_k}\le C \alpha_k$, then we can get
\begin{equation}\label{pro.9}
	\begin{aligned}
		\alpha_k \sqrt{1-\theta_k}\le\eta_k\le C \alpha_k \sqrt{1-\theta_k}.
	\end{aligned}
\end{equation}
By the non-decreasing of $\theta_k$ and $0<\theta_k<1$, we obtain that $\sqrt{1-\theta_k}$ is also non-increasing and non-negative. Upon since $\alpha_k$ is non-increasing and non-negative, we then have $\alpha_k \sqrt{1-\theta_k}$ is non-increasing. Thus, \text{\textcircled{\small{$3$}}} is satisifed with $C_0:=1$ and $\tilde{C}_0:=C$.
\item[$(2)$] The conditions \textcircled{\scriptsize {z2}} and \textcircled{\scriptsize {z4}} in \textbf{SC-Zou} imply \text{\textcircled{\small{4}}} in \textbf{SC-Adam}.
\par  By the fact that $1-\theta_k\le\sqrt{1-\theta_k}$ holds for $0<\theta_k<1$, we have
\begin{equation}\label{pro.10}
	\begin{aligned}
		0
		\leq \frac{\sum_{k=1}^{K}\eta_k{\left( 1-\theta_k\right) }}{K\eta_{K}}
		\le\frac{\sum_{k=1}^{K}\eta_k\sqrt{1-\theta_k}}{K\eta_{K}}.
		\nonumber
	\end{aligned}
\end{equation}
Therefore, we can derive $\big(\sum_{k=1}^{K}\eta_k{\left( 1-\theta_k\right)}\big)/(K\eta_{K})=o(1)$ in \text{\textcircled{\small{4}}} from the condition $\big(\sum_{k=1}^{K}\eta_k\sqrt{1-\theta_k}\big)/(K\eta_{K})=o(1)$ in \textcircled{\scriptsize {z4}}.
\item[$(3)$] The conditions \textcircled{\scriptsize {z3}} and \textcircled{\scriptsize {z4}} in \textbf{SC-Zou} imply \text{\textcircled{\small{5}}} in \textbf{SC-Adam}.
\par It follows from \textcircled{\scriptsize {z3}} that
\begin{equation}\label{pro.11}
	\begin{aligned}
		\eta_k\le C \alpha_k \sqrt{1-\theta_k}
		\le C \alpha_1 \sqrt{1-\theta_k},\nonumber
	\end{aligned}
\end{equation}
where the second inequality uses the fact that $\alpha_{k}$ is non-increasing,  that is,  $\alpha_{k}\le\alpha_1$. Further, we can get
\begin{equation}\label{pro.12}
	\begin{aligned}
		0\leq 
		\frac{\sum_{k=1}^{K}\eta_k^2}{K\eta_{K}}\le\frac{C \alpha_1\sum_{k=1}^{K}\eta_k\sqrt{1-\theta_k}}{K\eta_{K}}.
		\nonumber
	\end{aligned}
\end{equation}
Therefore, $\big( \sum_{k=1}^{K}\eta_k\sqrt{1-\theta_k}\,
\big)/( K\eta_{K})=o(1)$ in \textcircled{\scriptsize {z4}} indicates $\big( \sum_{k=1}^{K}\eta_k^2\big)/( K\eta_{K}) =o(1)$ in \text{\textcircled{\small{5}}}.
\end{enumerate}
The proof is completed.
\end{proof}

\vskip 0.2in
\bibliography{sample}

\end{document}